\documentclass[reqno]{amsart}  

\usepackage{general,resistance}
\renewcommand{\linenopax}{} 

\usepackage[protrusion=true,expansion]{microtype}
 

\usepackage[bookmarks, colorlinks=true, pdfstartview=FitV, linkcolor=blue, citecolor=blue, urlcolor=blue]{hyperref}
\usepackage{cite}  

\usepackage[left=1.4in,right=1.4in,top=1in,bottom=1in]{geometry}

\newcommand{\adj}{\star}

\newcommand{\LapK}{{\ensuremath{\Lap_{\HE}^{(Kr)}}}\xspace}     
\newcommand{\dx}{\operatorname{d}\negsp[4]x}  		
\newcommand{\dy}{\operatorname{d}\negsp[4]y}  		
\newcommand{\dgl}{\operatorname{d}\negsp[4]\gl}  		
\newcommand{\dgm}{\operatorname{d}\negsp[4]\gm}  		
\renewcommand{\cl}[1]{\cj{#1}}            

\newcommand{\sgn}{\operatorname{sgn}\xspace}

\numberwithin{equation}{section} \numberwithin{theorem}{section}
\numberwithin{figure}{section}


\begin{document}


\title[Unbounded containment and Krein Laplacian]
  {Unbounded containment in the energy space of a network and the Krein extension of the energy Laplacian}

\author{Palle E. T. Jorgensen}\email{palle-jorgensen@uiowa.edu} 
\address{University of Iowa, Iowa City, IA 52246-1419 USA}

\author{Erin P. J. Pearse}\email{epearse@calpoly.edu}
\address{California State Polytechnic University, San Luis Obispo, CA 93405-0403 USA}


\begin{abstract}
  We compare the space of square-summable functions on an infinite graph (denoted $\ell^2(G)$) with the space of functions of finite energy (denoted $\mathcal{H}_{\mathcal{E}}$). There is a notion of inclusion that allows $\ell^2(G)$ to be embedded into $\mathcal{H}_{\mathcal{E}}$, but the required inclusion operator is unbounded in most interesting cases. These observations assist in the construction of the Krein extension of the Laplace operator on $\mathcal{H}_{\mathcal{E}}$. We investigate the Krein extension and compare it to the Friedrichs extension developed by the authors in a previous paper.
\end{abstract}

\keywords{
  Graph energy, graph Laplacian, weighted graph, infinite network, reproducing kernel Hilbert space, self-adjoint extensions, unbounded operator, Krein extension, Friedrichs extension, unbounded containment, harmonic function.
}

\subjclass[2010]{
    Primary:
    46E22, 
    47B25, 
    47B32. 
    Secondary:
    05C50, 
    39A12, 
    60K35. 
}


\maketitle

\allowdisplaybreaks


\section{Introduction}

We study Laplace operators on infinite networks, and their self-adjoint extensions. Here, a network is just an connected undirected weighted graph $(G, c)$; see Definition~\ref{def:network}. The associated network Laplacian \Lap acts on functions $u:G \to \bR$; see Definition~\ref{def:graph-laplacian}. We study the case when \Lap is unbounded, in which case some care must be taken with the domains. One natural domain for \Lap lies in \HE, the Hilbert space of (equivalence classes of) finite-energy functions; see Definition~\ref{def:H_energy}. Another natural domain for \Lap lies in $\ell^2(G)$, the unweighted space of square-summable functions on $G$ under counting measure; see Definition~\ref{def:ell2}. We will use the respective notations \LapE and $\Lap_2$ to refer to these two very distinct incarnations of the Laplacian. Although the action of \Lap is defined by the same formula for elements of $\ell^2(G)$ as for \HE (i.e., the operators \LapE and $\Lap_2$ agree formally), the difference in domains results in rather striking spectral theoretic consequences in the two different contexts. 

Common to \LapE and $\Lap_2$ is that each is defined on its natural dense domain in each of the Hilbert spaces \HE and $\ell^2(G)$, and in each case it is a Hermitian and non-negative operator. However, it is known from \cite{Woj07, SRAMO, KellerLenz09, KellerLenz10} that \Lap is essentially self-adjoint on its natural domain in $\ell^2(G)$ but in \cite{SRAMO} it is shown that \Lap is \emph{not} essentially self-adjoint on its natural domain in \HE (see Definition~\ref{def:domLapE}) (this fact is used in the proof of Theorem~\ref{def:M=Lstar}, and elsewhere). When \Lap is not essentially self-adjoint, it has multiple self-adjoint extensions. The Friedrichs extension was constructed in a previous paper \cite{Friedrichs}; in the present paper, we construct the Krein extension (see Theorem~\ref{thm:JJ-Lap}) and compare it to the Friedrichs extension (see Cor.~\ref{thm:spec(LapK)}). To carry this out, we investigate the inclusion operator $J$ that maps finitely supported functions from $\Lap_2$ into \LapE. In particular, we use $J$ and its adjoint to construct the Krein extension (Theorem~\ref{thm:JJ-Lap}), find bounds for \Lap (Theorem~\ref{thm:equivalent-bounds}), and study \Lap as a mapping from \HE to $\ell^2(G)$ (which, it turns out, is the adjoint of $J$; cf.~Theorem~\ref{def:M=Lstar}). For background on the Krein extensions, see, for example, \cite{MR2349811,MR1618628,MR595414}.

To make this paper accessible to diverse audiences, we have included a number of definitions we shall need from the theory of (i) infinite networks, and (ii) the use of unbounded operators on Hilbert space in discrete contexts. Some useful background references for the first are \cite{Soardi94} and \cite{Woess09} and the multifarious references cited therein; see also \cite{Yamasaki79, Zem91, HK10, Kayano88, Kayano84, Kayano82, MuYaYo, vBL09, DuJo10}. For the second, see \cite{DuSc88} (especially Ch.~12) and \cite{vN32, Sto90, DJ06, BB09}.
For relevant background on reproducing kernels, see e.g., \cite{PaSc72, Aronszajn50, MuYaYo, Kal70}. 
   In our first section below, we have recorded some lemmas from \cite{DGG, ERM, Multipliers, SRAMO, bdG, RBIN, RANR, Interpolation, LPS, OTERN} in the form in which they will be needed in the rest of the paper. Some of these results are folkloric or well known in the literature; in such cases, we refer to our own papers only for convenience.
   
Some of our results concern an operator which may at first appear to be too banal to be of interest: the inclusion operator mapping a dense subspace of one Hilbert space into another Hilbert space. It will be important in what follows to see that the inclusion is closable, or equivalently, that the adjoint of the inclusion is densely defined. To indicate how this can fail (rather severely), we include the following simple example. For further background on closability, see \cite{MR0052042,MR600620}.

\begin{exm}\label{exm:trivial-adjoint}
	Let $X=[0,1]$, and consider $L^2(X,\gl)$ and $L^2(X,\gm)$ for measures \gl and \gm which are mutually singular. For concreteness, let \gl be Lebesgue measure, and let \gm be the classical singular continuous Cantor measure. Then the support of \gm is the middle-thirds Cantor set, which we denote by $K$, so that $\gl(K)=1$ and $\gl(X\less K)=1$. The continuous functions $C(X)$ are a dense subspace of both $L^2(X,\gl)$ and $L^2(X,\gm)$ (see, e.g. \cite[Ch.~2]{Rud87}). Define the inclusion operator $J$ to be the operator with dense domain $C(X)$ and
\linenopax
\begin{align}\label{eqn:L2-inclusion-exm}
	J:C(X) \ci L^2(X,\gl) \to L^2(X,\gm)
	\qq\text{by}\qq
	J \gf = \gf.
\end{align}
We will show that $\dom J^\ad = \{0\}$, so suppose $f \in \dom J^\ad$. Without loss of generality, one can assume $f \geq 0$ by replacing $f$ with $|f|$, if necessary.
By definition, $f \in \dom J^\ad$ iff there exists $g \in L^2(X,\gl)$ for which
\linenopax
\begin{align}\label{eqn:adj-exm-defn}
	\la J\gf,f\ra_{\gm} = \int_X \gf f \dgm = \int_X \gf g \dgl = \la \gf, g \ra_{\gl},
	\qq \text{for all }\gf \in C(X). 
\end{align}
One can choose $(\gf_n)_{n =1}^\iy \ci C(X)$ so that $\gf_n|_{K}=1$ and $\lim_{n \to \iy} \int_X \gf_n \dgl = 0$ by considering the appropriate piecewise linear modifications of the constant function 1. For example, see Figure~\ref{fig:cantor-collapse}.
	\begin{figure}[b]
		\begin{centering}
		\scalebox{0.7}{\includegraphics{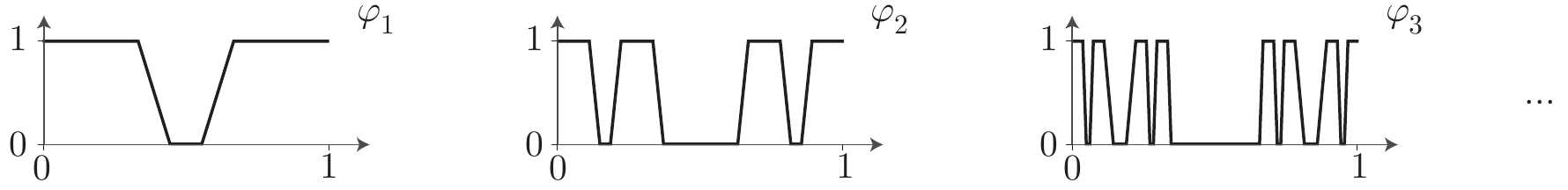}} 
		\end{centering}
		\caption{\captionsize A sequence $\{\gf_n\} \ci C(X)$ for which $\gf_n|_K=1$ and $\lim \int_X \gf_n \dgl =0$. See Example~\ref{exm:trivial-adjoint}.}
		\label{fig:cantor-collapse}
	\end{figure}
Now we have 
\linenopax
\begin{align}\label{eqn:adj-exm-defn}
	\int_X |f| \dgm 
	= \la 1, f \ra_{\gm} 
	= \la \gf_n, f \ra_{\gm} 
	= \la \gf_n, J^\ad f \ra_{\gl},
	\qq\text{for any } n,
\end{align}
but $\lim_{n \to \iy} \int_X \gf_n g \dgl = 0$ for any continuous $g \in L^2(X,\gl)$, which shows that $f=0$. So $\dom J^\ad = \{0\}$. In this context, one can interpret the adjoint of the inclusion as multiplication by a Radon-Nikodym derivative (``$J^\ad f  = f\frac{\dgm}{\dgl}$''), which must be trivial when the measures are mutually singular. Hence, the inclusion operator in \eqref{eqn:L2-inclusion-exm} is not closable.
\end{exm}

\subsection{Acknowledgements}

The authors are grateful to Daniel Lenz for fruitful discussions and generosity of ideas, especially regarding the inclusion map $J:\ell^2(G) \to \HE$ (and its adjoint) which appears in Definition~\ref{def:J} and is discussed throughout \S\ref{sec:inclusion-and-Krein}. In particular, we are grateful for contributions to the statement and proof of Theorem~\ref{thm:JJ-Lap}, a key result for establishing the identity of $J^\ad J$ as the $\ell^2$ Laplacian, and $JJ^\ad$ as the Krein extension of the Laplacian on \HE. See also \cite{KellerLenz09,KellerLenz10,HaeselerKellerLenzWoj12}.

\section{Basic terms and previous results}
\label{sec:Basic-terms-and-previous-results}

We now proceed to introduce the key notions used throughout this paper: resistance networks, the energy form \energy, the Laplace operator \Lap, and their elementary properties. 

\begin{defn}\label{def:network}
  A \emph{(resistance) network} $(\Graph,\cond)$ is a connected weighted undirected graph with vertex set \Graph and adjacency relation defined by a symmetric \emph{conductance function} $\cond: \Graph \times \Graph \to [0,\iy)$. 
  More precisely, there is an edge connecting $x$ and $y$ iff $c_{xy}>0$, in which case we write $x \nbr y$. The nonnegative number $c_{xy}=c_{yx}$ is the weight associated to this edge and it is interpreted as the conductance, or reciprocal resistance of the edge.
  
  We make the standing assumption that $(\Graph,\cond)$ is \emph{locally finite}. This means that every vertex has \emph{finite degree}, i.e., for any fixed $x \in \Graph$ there are only finitely many $y \in \Graph$ for which $c_{xy}>0$. We denote the net conductance at a vertex by 
  \linenopax
  \begin{align}\label{eqn:c(x)}
      \cond(x) := \sum_{y \nbr x} \cond_{xy}.     
  \end{align}
Motivated by current flow in electrical networks, we also assume $c_{xx}=0$ for every vertex $x \in G$.  

In this paper, \emph{connected} means simply that for any $x,y \in \Graph $, there is a finite sequence $\{x_i\}_{i=0}^n$ with $x=x_0$, $y=x_n$, and $\cond_{x_{i-1} x_i} > 0$, $i=1,\dots,n$.  

For any network, one can fix a reference vertex, which we shall denote by $o$ (for ``origin''). It will always be apparent that our calculations depend in no way on the choice of $o$.
\end{defn}

\begin{defn}\label{def:graph-laplacian}
  The \emph{Laplacian} on \Graph is the linear difference operator which acts on a function $u:\Graph \to \bR$ by
  \linenopax
  \begin{equation}\label{eqn:Lap}
    (\Lap u)(x) :
    = \sum_{y \nbr x} \cond_{xy}(u(x)-u(y)).
  \end{equation}
  A function $u:\Graph \to \bR$ is \emph{harmonic} iff $\Lap u(x)=0$ for each $x \in \Graph$.
  Note that the sum in \eqref{eqn:Lap} is finite by the local finiteness assumption above, and so the Laplacian is well-defined.
\end{defn}

\begin{remark}\label{rem:Laplace-incarnations}
	The domain of \Lap, considered as an operator on \HE or $\ell^2(G)$, is given in Definition~\ref{def:domLapE} and Definition~\ref{def:domLap2}.
\end{remark}

\subsection{The energy space \HE} 
\label{sec:The-energy-space}

\begin{defn}\label{def:graph-energy}
  The \emph{energy form} is the (closed, bilinear) Dirichlet form
  \linenopax
  \begin{align}\label{eqn:def:energy-form}
    \energy(u,v)
    := \frac12 \sum_{x,y \in \Graph} \cond_{xy}(u(x)-u(y))(v(x)-v(y)),
  \end{align}
  which is defined whenever the functions $u$ and $v$ lie in the domain
  \linenopax
  \begin{equation}\label{eqn:def:energy-domain}
    \dom \energy = \{u:\Graph \to \bR \suth \energy(u,u)<\iy\}.
  \end{equation}
  Hereafter, we write the energy of $u$ as $\energy(u) := \energy(u,u)$. Note that $\energy(u)$ is a sum of nonnegative terms and hence converges iff it converges absolutely. 
\end{defn}

The energy form \energy is sesquilinear and conjugate symmetric on $\dom \energy$ and would be an inner product if it were positive definite. Let \one denote the constant function with value 1 and observe that $\ker \energy = \bR \one$. One can show that $\dom \energy / \bR \one$ is complete and that \energy is closed;  see \cite{DGG,OTERN}, \cite{Kat95}, or \cite{FOT94}.

\begin{defn}\label{def:H_energy}\label{def:The-energy-Hilbert-space}
  The \emph{energy (Hilbert) space} is $\HE := \dom \energy / \bR \one$. The inner product and corresponding norm are denoted by
  \linenopax
  \begin{equation}\label{eqn:energy-inner-product}
    \la u, v \ra_\energy := \energy(u,v)
    \q\text{and}\q
    \|u\|_\energy := \energy(u,u)^{1/2}.
  \end{equation}
\end{defn}

It is shown in \cite[Lem.~2.5]{DGG} that the evaluation functionals $L_x u = u(x) - u(o)$ are bounded, and hence correspond to elements of \HE by Riesz duality (see also \cite[Cor.~2.6]{DGG}). When considering \bC-valued functions, \eqref{eqn:energy-inner-product} is modified as follows: $\la u, v \ra_\energy := \energy(\cj{u},v)$.

\begin{defn}\label{def:vx}\label{def:energy-kernel}
  Let $v_x$ be defined to be the unique element of \HE for which
  \linenopax
  \begin{equation}\label{eqn:v_x}
    \la v_x, u\ra_\energy = u(x)-u(o),
    \qq \text{for every } u \in \HE.
  \end{equation}
  Note that $v_o$ corresponds to a constant function, since $\la v_o, u\ra_\energy = 0$ for every $u \in \HE$. Therefore, $v_o$ may be safely omitted in some calculations. 
\end{defn}

  As \eqref{eqn:v_x} means that the collection $\{v_x\}_{x \in \Graph}$ forms a reproducing kernel for \HE, we call $\{v_x\}_{x \in \Graph}$ the \emph{energy kernel}. It follows that the energy kernel has dense span in \HE; cf. \cite{Aronszajn50}.\footnote{To see this, note that a RKHS is a Hilbert space $H$ of functions on some set $X$, such that point evaluation by points in $X$ is continuous in the norm of $H$. Consequently, every $x \in X$ defines a vector $k_x \in H$ by Riesz's Theorem, and it is immediate from this that $\spn\{k_x\}_{x \in X}$ is dense in $H$.}

\begin{remark}[Differences and representatives]\label{rem:differences}
  Equation \eqref{eqn:v_x} is independent of the choice of representative of $u$ because the right-hand side is a difference: if $u$ and $u'$ are both representatives of the same element of \HE, then $u' = u+k$ for some $k \in \bR$ and
  $u'(x) - u'(o) = (u(x)+k)-(u(o)+k) = u(x)-u(o).$
  By the same token, the formula for \Lap given in \eqref{eqn:Lap} describes unambiguously the action of \Lap on equivalence classes $u \in \HE$. Indeed, formula \eqref{eqn:Lap} defines a function $\Lap u:\Graph \to \bR$ but we may also interpret $\Lap u$ as the class containing this representative.
\end{remark}

\begin{defn}\label{def:d_x}
  Let $\gd_x \in \ell^2(G)$ denote the Dirac mass at $x$, i.e., the characteristic function of the singleton $\{x\}$ and let $\gd_x \in \HE$ denote the element of \HE which has $\gd_x \in \ell^2(G)$ as a representative. The context will make it clear which meaning is intended. 
\end{defn}

\begin{remark}\label{rem:Diracs-in-HE}
	 Observe that $\energy(\gd_x) = \cond(x) < \iy$ is immediate from \eqref{eqn:def:energy-form}, and hence one always has $\gd_x \in \HE$ (recall that $c(x)$ is the total conductance at $x$; see \eqref{eqn:c(x)}).
\end{remark}

\begin{defn}\label{def:Fin}
  For $v \in \HE$, one says that $v$ has \emph{finite support} iff there is a finite set $F \ci G$ such that $v(x) = k \in \bC$ for all $x \notin F$. Equivalently, the set of functions of finite support in \HE is 
  \linenopax
  \begin{equation}\label{eqn:span(dx)}
    \spn\{\gd_x\} = \{u \in \dom \energy \suth u \text{ is constant outside some finite set}\}.
  \end{equation}
  Define \Fin to be the \energy-closure of $\spn\{\gd_x\}$. 
\end{defn}

\begin{defn}\label{def:Harm}
  The set of harmonic functions of finite energy is denoted
  \linenopax
  \begin{equation}\label{eqn:Harm}
    \Harm := \{v \in \HE \suth \Lap v(x) = 0, \text{ for all } x \in G\}.
  \end{equation}
\end{defn}

The following result is well known; see \cite[\S{VI}]{Soardi94}, \cite[\S9.3]{Lyons}, \cite[Thm.~2.15]{DGG}, or the original \cite[Thm.~4.1]{Yamasaki79}.

\begin{theorem}[Royden Decomposition]\label{thm:HE=Fin+Harm}
  $\HE = \Fin \oplus \Harm$.
\end{theorem}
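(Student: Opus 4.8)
The plan is to exhibit $\Fin$ and $\Harm$ as mutually orthogonal closed subspaces of the Hilbert space $\HE$ whose direct sum exhausts $\HE$. Since $\Fin$ is closed by its very definition (Definition~\ref{def:Fin}), this reduces to identifying $\Harm$ with the orthogonal complement $\Fin^\perp$ and then invoking the orthogonal decomposition theorem for Hilbert spaces.

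The key computation is the identity
\begin{equation*}
  \la \gd_x, u \ra_\energy = \Lap u(x), \qq \text{for all } u \in \HE, \; x \in \Graph.
\end{equation*}
First I would verify this directly from the definition of the energy form \eqref{eqn:def:energy-form}: since $\gd_x$ is supported at the single vertex $x$, the difference $\gd_x(s)-\gd_x(t)$ vanishes unless exactly one of $s,t$ equals $x$, so by local finiteness only finitely many terms survive, and collecting them yields $\tfrac12(\Lap u(x) + \Lap u(x)) = \Lap u(x)$. The bilinear form is well-defined here because $\gd_x \in \HE$ (Remark~\ref{rem:Diracs-in-HE}) and the Schwarz inequality bounds $\la \gd_x, u\ra_\energy$; the finiteness of the support makes the rearrangement of the sum entirely elementary.

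With this identity in hand, $u \in \Harm$ means precisely that $\Lap u(x) = 0$ for every $x$, which is equivalent to $\la \gd_x, u\ra_\energy = 0$ for every $x$, i.e.\ $u \perp \spn\{\gd_x\}$. Since a vector is orthogonal to a set if and only if it is orthogonal to the closed span of that set, and $\Fin$ is by definition the \energy-closure of $\spn\{\gd_x\}$, this says exactly $\Harm = \Fin^\perp$. The orthogonal decomposition theorem then gives $\HE = \Fin \oplus \Fin^\perp = \Fin \oplus \Harm$.

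The proof has no serious obstacle: the only points requiring care are (i) checking that $\Lap u$ is well-defined on equivalence classes, which is already noted in Remark~\ref{rem:differences}, and (ii) justifying the interchange in the energy computation, which is trivial because $\gd_x$ has finite support. The conceptual heart is simply recognizing that the energy-kernel element $\gd_x$ acts as an evaluation of $\Lap u$ at $x$, converting the pointwise harmonicity condition into an orthogonality condition against $\Fin$.
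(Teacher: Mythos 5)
Your proof is correct and is essentially the argument behind the result as the paper presents it: the paper states the Royden decomposition without proof, citing \cite[Thm.~2.15]{DGG} among others, and that standard proof proceeds exactly as yours does, identifying $\Harm = \Fin^\perp$ via the reproducing identity $\la \gd_x, u \ra_\energy = \Lap u(x)$, which is precisely the paper's Lemma~\ref{thm:<delta_x,v>=Lapv(x)}, and then applying the orthogonal decomposition theorem to the closed subspace $\Fin$. No gaps; nothing further is needed.
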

 

\begin{defn}\label{def:dipole}
  A \emph{monopole} is any $w \in \HE$ satisfying the pointwise identity $\Lap w = \gd_x$ (in either sense of Remark~\ref{rem:differences}) for some vertex $x \in \Graph$. 
  A \emph{dipole} is any $v \in \HE$ satisfying the pointwise identity $\Lap v = \gd_x - \gd_y$ for some $x,y \in \Graph$.%
\end{defn}

\begin{remark}\label{rem:monotransience}
  It is easy to see from the definitions (or \cite[Lemma~2.13]{DGG}) that energy kernel elements are dipoles, i.e., that $\Lap v_x = \gd_x - \gd_o$, and that one can therefore always find a dipole for any given pair of vertices $x,y \in G$, namely, $v_x-v_y$. On the other hand, monopoles exist if and only if the network is transient (see \cite[Thm.~2.12]{Woess00} or \cite[Rem.~3.5]{DGG}). 
\end{remark}  

\begin{remark}\label{rem:normalization}
  Denote the unique energy-minimizing monopole at $o$ by $w_o$; the existence of such an object is explained in \cite[\S3.1]{DGG}.
  We will be interested in the family of monopoles defined by
  \linenopax
  \begin{align}\label{eqn:w_x}
    \monov := w_o + v_x, \qq x \neq o.
  \end{align}
  We will use the representatives specified by
  \linenopax
  \begin{align}\label{eqn:w_x(o)}
    \monov(y) = \la \monov,\monoy\ra_\energy = \monoy(x), 
    \qq\text{and}\qq
    v_x(o)=0.
  \end{align}
  When $\Harm=0$, $\energy(\monov) = \la \monov,\monov\ra_\energy = \monov(x)$ is the \emph{capacity} of $x$; see, e.g., \cite[\S4.D]{Woess09}.
\end{remark}

\begin{lemma}[{$\negsp[5]$\cite[Lem.~2.11]{DGG}}]
  \label{thm:<delta_x,v>=Lapv(x)}
  For $x \in \Graph$ and $u \in \HE$,  
  \linenopax
  \begin{align}\label{eqn:dx-rep}
		\la \gd_x, u \ra_\energy = \Lap u(x).
	\end{align}
  \begin{proof}
    One can compute $\la \gd_x, u \ra_\energy = \energy(\gd_x, u)$ directly from formula \eqref{eqn:def:energy-form}.
  \end{proof}
\end{lemma}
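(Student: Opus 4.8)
The plan is to establish the identity by a direct expansion of the energy inner product, exploiting the fact that $\gd_x$ is supported at the single vertex $x$. First I would write $\la \gd_x, u \ra_\energy = \energy(\gd_x, u)$ and substitute into the defining formula \eqref{eqn:def:energy-form}, taking the double sum over pairs $(s,t) \in \Graph \times \Graph$ with fresh summation indices so as not to collide with the fixed vertex $x$:
\begin{equation*}
	\energy(\gd_x, u) = \frac12 \sum_{s,t \in \Graph} \cond_{st}\,(\gd_x(s) - \gd_x(t))(u(s) - u(t)).
\end{equation*}

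The key observation is that the factor $\gd_x(s) - \gd_x(t)$ vanishes unless exactly one of $s,t$ equals $x$; the diagonal term $s=t=x$ drops out both because the difference is zero and because $\cond_{xx}=0$. Hence only two families of terms survive: those with $s=x,\ t\neq x$ (contributing $\cond_{xt}(u(x)-u(t))$) and those with $t=x,\ s\neq x$ (contributing $-\cond_{sx}(u(s)-u(x)) = \cond_{sx}(u(x)-u(s))$). Using the symmetry $\cond_{st}=\cond_{ts}$ and relabelling the dummy index to $y$, each of these two sums equals $\sum_{y \nbr x} \cond_{xy}(u(x)-u(y)) = \Lap u(x)$, where the restriction to neighbors is automatic since non-adjacent $y$ contribute $\cond_{xy}=0$. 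The two halves then combine and the prefactor $\frac12$ is absorbed, yielding exactly $\Lap u(x)$.

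Two small points deserve attention. First, local finiteness guarantees that only finitely many $y$ satisfy $y \nbr x$, so the surviving sum is in fact finite and no question of convergence or rearrangement of the double sum arises; this is what makes the formal manipulation rigorous. Second, the right-hand side $\Lap u(x)$ is a difference expression in the values of $u$, hence independent of the choice of representative for the class $u \in \HE$ (cf.~Remark~\ref{rem:differences}), so the statement is well-posed on equivalence classes. Since the computation is elementary, there is no serious obstacle; the only thing to be careful about is the bookkeeping that pairs up the $s=x$ and $t=x$ contributions correctly so that the factor $\frac12$ cancels.
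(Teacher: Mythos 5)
Your proof is correct and is exactly the computation that the paper's one-line proof (``compute $\la \gd_x, u \ra_\energy = \energy(\gd_x,u)$ directly from \eqref{eqn:def:energy-form}'') alludes to: you expand the double sum, note that only the finitely many pairs with exactly one index equal to $x$ survive, and combine the two symmetric families so the factor $\tfrac12$ cancels, giving $\Lap u(x)$. The added care about local finiteness and independence of representative is sound but routine; there is no substantive difference from the paper's approach.
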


\begin{lemma}[{$\negsp[5]$\cite[Lem.~2.19]{DGG}}]
  \label{thm:d=v}
	Whenever $\deg(x)<\iy$, one can express $\gd_x$ in terms of the reproducing kernel $\{v_x\}_{x \in G}$ via
  \linenopax
	\begin{align}\label{eqn:d=v}
		\gd_x = c(x)v_x - \sum_{y \nbr x} c_{xy}v_y.
	\end{align}
\end{lemma}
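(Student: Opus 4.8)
The plan is to verify the claimed identity weakly, by pairing both sides against an arbitrary $u \in \HE$ and using the nondegeneracy of the energy inner product. Since equality in \HE is equivalent to equality of all inner products $\la \cdot, u\ra_\energy$ (indeed it suffices to test against the dense family $\{v_z\}$), and since $\deg(x)<\iy$ guarantees that the sum $\sum_{y \nbr x} c_{xy} v_y$ is a \emph{finite} \HE-linear combination, the right-hand side is a well-defined element of \HE and it is enough to show that
\linenopax
\begin{align}\label{eqn:d=v-weak}
  \la \gd_x, u \ra_\energy
  = \Big\la c(x) v_x - \sum_{y \nbr x} c_{xy} v_y, \; u \Big\ra_\energy
  \qq \text{for every } u \in \HE.
\end{align}

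For the left-hand side I would invoke Lemma~\ref{thm:<delta_x,v>=Lapv(x)} to write $\la \gd_x, u\ra_\energy = \Lap u(x) = \sum_{y \nbr x} c_{xy}(u(x)-u(y))$. For the right-hand side, I would expand using the reproducing property \eqref{eqn:v_x}, namely $\la v_z, u\ra_\energy = u(z)-u(o)$, obtaining
\linenopax
\begin{align}\label{eqn:d=v-RHS}
  c(x)\big(u(x)-u(o)\big) - \sum_{y \nbr x} c_{xy}\big(u(y)-u(o)\big).
\end{align}
The key algebraic step is then to use $c(x) = \sum_{y \nbr x} c_{xy}$ from \eqref{eqn:c(x)} to rewrite $c(x)(u(x)-u(o))$ as $\sum_{y \nbr x} c_{xy}(u(x)-u(o))$. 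The terms involving $u(o)$ in \eqref{eqn:d=v-RHS} cancel, and what remains is $\sum_{y \nbr x} c_{xy}(u(x)-u(y)) = \Lap u(x)$, which matches the left-hand side and establishes \eqref{eqn:d=v-weak}.

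I do not expect any serious obstacle here; the argument is essentially bookkeeping once the two reproducing identities (Lemma~\ref{thm:<delta_x,v>=Lapv(x)} and \eqref{eqn:v_x}) are in hand. The only points requiring a word of care are that all quantities are genuinely finite and well-defined: local finiteness ($\deg(x)<\iy$) ensures the sum is finite, and working throughout with the \emph{differences} $u(\cdot)-u(o)$ keeps the computation independent of the choice of representative of $u$, as noted in Remark~\ref{rem:differences}. The cancellation of the $u(o)$ contributions is precisely what makes the right-hand side well-defined on equivalence classes in \HE, consistent with the observation that $v_o$ represents the zero class.
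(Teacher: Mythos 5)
Your proposal is correct and follows essentially the standard argument for this result: the paper itself gives no proof here (it simply cites \cite[Lem.~2.19]{DGG}), and the proof there is precisely this weak computation, pairing both sides against an arbitrary $u \in \HE$ using Lemma~\ref{thm:<delta_x,v>=Lapv(x)} on the left and the reproducing property \eqref{eqn:v_x} on the right, with the $u(o)$ contributions cancelling via $c(x)=\sum_{y \nbr x} c_{xy}$. Your attention to finiteness of the sum and representative-independence matches the intended hypotheses, so nothing further is needed.
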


\begin{lemma}\label{thm:Lap-mono-Kron}
  For any $x,y \in G$, 
  \linenopax
  \begin{align}\label{eqn:Lap-mono-Kron}
    \Lap \monov(y) = \Lap \monoy(x) 
    = \la \monov,\Lap \monoy\ra_\energy 
    = \la \Lap \monov, \monoy\ra_\energy = \gd_{xy},  
  \end{align}
  where $\gd_{xy}$ is the Kronecker delta.
  \begin{proof}
    First, note that $\Lap \monov(y) = \gd_{xy} = \Lap \monov(y)$ as functions, immediately from the definition of monopole. Then the substitution $\Lap \monoy = \gd_y$ gives 
    \linenopax
  	\begin{align}\label{eqn:<w,d>=d}
      \la \monov,\Lap \monoy\ra_\energy
      = \la \monov,\gd_y\ra_\energy
      = \Lap \monov(y)
    \end{align}
    by Lemma~\ref{thm:<delta_x,v>=Lapv(x)}, and similarly for the other identity. 
  \end{proof}
\end{lemma}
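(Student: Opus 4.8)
The plan is to derive every term in the chain from just two ingredients: the defining pointwise identity of a monopole, namely $\Lap \monov = \gd_x$ and $\Lap \monoy = \gd_y$, together with the reproducing identity $\la \gd_x, u \ra_\energy = \Lap u(x)$ supplied by Lemma~\ref{thm:<delta_x,v>=Lapv(x)}. The conceptual point to keep straight throughout is the dual reading of $\Lap w$ flagged in Remark~\ref{rem:differences}: it denotes both a function $\Graph \to \bR$ (via formula~\eqref{eqn:Lap}) and the class in \HE containing that function. I would state at the outset which reading is in force at each step.

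First I would handle the two pointwise evaluations. Since $\monov$ is a monopole at $x$, the function $\Lap \monov$ equals $\gd_x$ identically, so $\Lap \monov(y) = \gd_x(y) = \gd_{xy}$; symmetrically $\Lap \monoy(x) = \gd_y(x) = \gd_{yx} = \gd_{xy}$. This already gives the first equality $\Lap \monov(y) = \Lap \monoy(x)$ and pins both quantities to the Kronecker delta.

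Next I would convert the two inner products. Substituting the \HE-identity $\Lap \monoy = \gd_y$ yields $\la \monov, \Lap \monoy \ra_\energy = \la \monov, \gd_y \ra_\energy$, and then applying Lemma~\ref{thm:<delta_x,v>=Lapv(x)} (using the symmetry of $\la \cdot, \cdot \ra_\energy$ in the real-valued setting at hand) identifies this with $\Lap \monov(y) = \gd_{xy}$. The companion term is entirely parallel: $\la \Lap \monov, \monoy \ra_\energy = \la \gd_x, \monoy \ra_\energy = \Lap \monoy(x) = \gd_{xy}$.

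I do not anticipate a genuine obstacle here. The only places demanding care are the bookkeeping between the pointwise and equivalence-class meanings of $\Lap$, and the implicit use of the symmetry of the energy inner product when interchanging the two slots; everything else is a direct appeal to the monopole definition and to Lemma~\ref{thm:<delta_x,v>=Lapv(x)}.
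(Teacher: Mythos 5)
Your proposal is correct and follows essentially the same route as the paper's own proof: both rest solely on the pointwise monopole identity $\Lap \monov = \gd_x$ and the reproducing property $\la \gd_x, u \ra_\energy = \Lap u(x)$ of Lemma~\ref{thm:<delta_x,v>=Lapv(x)}, with the same substitution step for the inner products. Your explicit remarks about the symmetry of the real energy inner product and the function-versus-class reading of $\Lap$ (Remark~\ref{rem:differences}) merely make precise what the paper leaves implicit in its ``and similarly for the other identity.''
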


\begin{defn}\label{def:domLapE}
  The closed operator \LapE on \HE is obtained by taking the graph closure of the operator \Lap defined on $\spn\{\monov\}_{x \in G}$ pointwise by \eqref{eqn:Lap}. 
  %
\end{defn}

The following lemma shows that Definition~\ref{def:domLapE} makes sense.  

 
\begin{lemma}\label{thm:semibounded}
   \LapE is a well-defined, non-negative, closed and Hermitian operator on \HE. 
  \begin{proof}
    Let $\gx = \sum_{x \in F} \gx_x \monov$, for some finite set $F \ci G$. By \eqref{eqn:Lap-mono-Kron}, 
    \linenopax
    \begin{align}\label{eqn:semibounded}
      \la u, \Lap u \ra_\energy
      &= \sum_{x,y \in F} {\gx_x} \gx_y \la \monov,\Lap \monoy\ra_\energy 
       = \sum_{x,y \in F} {\gx_x} \gx_y \gd_{xy}
       = \sum_{x \in F} |\gx_x|^2 \geq 0.  
    \end{align} 
    Since the conductance function $c$ is \bR-valued, the Laplacian commutes with conjugation and therefore is also symmetric as an operator in the corresponding \bC-valued Hilbert space. This implies \Lap is Hermitian and hence contained in its adjoint. Since every adjoint operator is closed, \Lap is closable. Furthermore, the closure of any semibounded operator is semibounded.    
    To see that the image of \Lap lies in \HE, note from Lemma~\ref{thm:Lap-mono-Kron} that $\Lap\monov = \gd_x \in \HE$ by Remark~\ref{rem:Diracs-in-HE}.
  \end{proof}
\end{lemma}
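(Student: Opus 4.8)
The plan is to verify each claimed property of \LapE in turn, using the reproducing-kernel structure established in the preceding lemmas. The operator is defined as the graph closure of \Lap acting on the dense subspace $\spn\{\monov\}_{x \in G}$, so the real content is that \Lap is well-defined, non-negative, and Hermitian on this core; closability (and hence the existence of the closure \LapE) then follows automatically, and closedness and semiboundedness pass to the closure by standard operator theory.

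First I would establish non-negativity by the direct computation carried out in \eqref{eqn:semibounded}: for a general element $\gx = \sum_{x \in F} \gx_x \monov$ of the core, expand $\la \gx, \Lap \gx\ra_\energy$ bilinearly and invoke Lemma~\ref{thm:Lap-mono-Kron}, which gives $\la \monov, \Lap \monoy\ra_\energy = \gd_{xy}$. The cross terms collapse against the Kronecker delta, leaving $\sum_{x \in F} |\gx_x|^2 \geq 0$, so \Lap is non-negative (in particular semibounded below by $0$) on its core. Next I would argue Hermiticity: since the conductance function $\cond$ is real-valued, \Lap commutes with complex conjugation, so it suffices to check symmetry, which is again visible from Lemma~\ref{thm:Lap-mono-Kron} (the middle equalities $\la \monov, \Lap \monoy\ra_\energy = \la \Lap \monov, \monoy\ra_\energy$). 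A symmetric operator is contained in its adjoint, and every adjoint is closed; therefore \Lap is closable, which is precisely what is needed for Definition~\ref{def:domLapE} to make sense. The closure of a semibounded Hermitian operator remains semibounded and Hermitian, so these properties transfer to \LapE.

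The remaining point is that \LapE genuinely maps into \HE, i.e.\ that the closure does not leave the Hilbert space. For this I would use Lemma~\ref{thm:Lap-mono-Kron} once more to identify $\Lap \monov = \gd_x$, together with Remark~\ref{rem:Diracs-in-HE}, which records $\energy(\gd_x) = \cond(x) < \iy$ and hence $\gd_x \in \HE$. Thus the image of the core lies in \HE, and since \HE is closed, the graph closure does as well.

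The main obstacle, and the step deserving the most care, is the verification of non-negativity and symmetry on the core via Lemma~\ref{thm:Lap-mono-Kron}: everything hinges on the orthonormality-type relation $\la \monov, \Lap \monoy\ra_\energy = \gd_{xy}$, and one must be careful that the monopoles $\monov$ (rather than the dipoles $v_x$) are the correct objects, since it is precisely this identity that produces the clean diagonal form in \eqref{eqn:semibounded}. Once that relation is in hand, the passage from closability of a semibounded symmetric operator to the well-definedness and closedness of its closure is routine and can be cited from standard references on unbounded operators.
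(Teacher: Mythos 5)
Your proposal is correct and follows essentially the same route as the paper's own proof: the same diagonalization $\la \monov, \Lap \monoy\ra_\energy = \gd_{xy}$ from Lemma~\ref{thm:Lap-mono-Kron} yields non-negativity via \eqref{eqn:semibounded}, the same real-conductance/conjugation argument gives Hermiticity and hence closability, and the same appeal to $\Lap\monov = \gd_x \in \HE$ (Remark~\ref{rem:Diracs-in-HE}) shows the image lies in \HE. Your explicit citation of the middle equalities in \eqref{eqn:Lap-mono-Kron} for symmetry on the core is a slightly more careful articulation of a step the paper leaves implicit, but it is the same argument.
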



\subsection{The Hilbert space $\ell^2(G)$} 
\label{sec:ell2(G)}

As there are many uses of the notation $\ell^2(G)$, we provide the following elementary definitions to clarify our conventions.

\begin{defn}\label{def:ell2}
For functions $u,v:G \to \bR$, define the inner product
  \linenopax
  \begin{align}\label{eqn:ell2-inner-product}
    \la u, v\ra_2 := \sum_{x \in G} u(x) v(x).
  \end{align}
\end{defn}
  
\begin{defn}\label{def:domLap2}
  The closed operator $\Lap_2$ on $\ell^2(G)$ is obtained by taking the graph closure (see Remark~\ref{rem:domLap2}) of the operator \Lap which is defined pointwise by \eqref{eqn:Lap} on $\spn\{\gd_x\}_{x \in G}$, the subspace of (finite) linear combinations of point masses.
\end{defn}

\begin{remark}\label{rem:domLap2}
  \cite[Lem.~2.7 and Thm.~2.8]{SRAMO} states that $\Lap$ is semibounded and essentially self-adjoint as an operator on $\spn\{\gd_x\}_{x \in G}$. We provide a new proof of essentially self-adjointness that holds more generally (and has a much simpler proof); see Theorem~\ref{thm:no-defect} and Corollary~\ref{thm:ess-sa} just below.
  
  Since $\Lap$ is semibounded and essentially self-adjoint, it follows that $\Lap$ is closable by the same arguments as in the end of the proof of Lemma~\ref{thm:semibounded}, whence $\Lap_2$ is closed, self-adjoint, and in particular, well-defined. Note that in sharp contrast, the analogous operator \LapE is not automatically self-adjoint (see \cite{SRAMO}) and hence some care is needed (for example, in the proof of Lemma~\ref{thm:semibounded}).
  See also \cite{Woj07, KellerLenz09, KellerLenz10}.
\end{remark}

\begin{defn}\label{def:C_0(G)}
	Denote the space of functions on $G$ which vanish at \iy by $C_0(G)$.
	Note that $C_0(G)$ contains all compactly supported functions and $\ell^p(G)$, for any $1 \leq p < \iy$. Additionally, note that if $f \in C_0(G)$ and $f(x)>0$ for some $x \in G$, then $f$ or $-f$ has a local maximum in $G$.
\end{defn}

\begin{theorem}\label{thm:no-harm-in-l2}
	There is no nonzero $u \in C_0(G)$ which satisfies the pointwise equation $\Lap u(x) = -u(x)$ at every $x \in G$. In particular, if $u \in \ell^2(G)$ and $\Lap u=0$, then $u=0$.
\end{theorem}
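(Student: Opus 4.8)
The plan is to prove the contrapositive maximum-principle statement first, since the second sentence follows from it as a corollary. Suppose $u \in C_0(G)$ is a nonzero solution of the pointwise equation $\Lap u(x) = -u(x)$ for all $x \in G$. Rewriting the Laplacian via \eqref{eqn:Lap}, the equation $\Lap u(x) = -u(x)$ reads
\linenopax
\begin{equation}\label{eqn:no-harm-rearranged}
  \sum_{y \nbr x} \cond_{xy}(u(x)-u(y)) = -u(x),
  \qq\text{i.e.}\qq
  (\cond(x)+1)\,u(x) = \sum_{y \nbr x} \cond_{xy}\,u(y).
\end{equation}
This exhibits $u(x)$ as a weighted average of neighboring values, but with total weight $\cond(x)+1 > \cond(x) = \sum_{y \nbr x}\cond_{xy}$ in the denominator; that strict inequality is the engine of the argument.

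The key step is the observation recorded in Definition~\ref{def:C_0(G)}: if $u \in C_0(G)$ is not identically zero, then replacing $u$ by $-u$ if necessary, $u$ attains a \emph{strictly positive} global maximum at some vertex $x_0 \in G$ (a maximum exists and is attained because $u$ vanishes at infinity, so only finitely many vertices exceed any positive threshold). Apply \eqref{eqn:no-harm-rearranged} at $x_0$. Since $u(y) \leq u(x_0)$ for every neighbor $y$, I estimate
\linenopax
\begin{equation}\label{eqn:no-harm-estimate}
  (\cond(x_0)+1)\,u(x_0) = \sum_{y \nbr x_0} \cond_{x_0 y}\,u(y) \leq \sum_{y \nbr x_0} \cond_{x_0 y}\,u(x_0) = \cond(x_0)\,u(x_0).
\end{equation}
Since $u(x_0)>0$, this forces $\cond(x_0)+1 \leq \cond(x_0)$, a contradiction. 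Hence no such nonzero $u$ exists. Note that local finiteness guarantees all sums are finite, so every manipulation is legitimate.

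For the second sentence, suppose $u \in \ell^2(G)$ with $\Lap u = 0$. First, $\ell^2(G) \ci C_0(G)$ by Definition~\ref{def:C_0(G)}. A harmonic $u$ satisfies $\Lap u(x)=0$, which is the $\gl=0$ case of the eigenvalue equation; to route it through the theorem I would note that a harmonic function realizes its value at each vertex as the $\cond(x)$-weighted (rather than $(\cond(x)+1)$-weighted) average of neighbors, so the same maximum-principle argument applies directly to $u$: at a strictly positive global maximum $x_0$ one gets $\cond(x_0)u(x_0) = \sum_{y\nbr x_0}\cond_{x_0y}u(y) \leq \cond(x_0)u(x_0)$, forcing $u(y)=u(x_0)$ at every neighbor, and then connectedness propagates the maximal value to all of $G$, contradicting $u \in C_0(G)$ unless $u\equiv 0$.

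I expect the main obstacle to be the second sentence rather than the first: the strict inequality that makes the eigenvalue case immediate degenerates to equality in the harmonic case, so I cannot simply quote the first statement as a black box. The genuine work is the propagation step—using connectedness (Definition~\ref{def:network}) to spread the maximal value from $x_0$ across the whole graph via the equality condition in \eqref{eqn:no-harm-estimate}—after which vanishing at infinity forces the constant to be $0$. One should double-check that the author does not intend to derive the $\ell^2$ statement from the first by a perturbation or positivity trick; if so, I would adapt accordingly, but the direct maximum-principle route above is self-contained and uses only the stated definitions.
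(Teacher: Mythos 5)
Your proof is correct and takes essentially the same maximum-principle approach as the paper: your estimate $(\cond(x_0)+1)u(x_0)\leq \cond(x_0)u(x_0)$ at a positive maximum is exactly the computation in the paper's proof of Theorem~\ref{thm:no-defect}, there phrased via the normalized transition operator $P$ as $\gf(x_1)\geq(1+\frac{1}{c(x_1)})\gf(x_1)$. You correctly note that the harmonic case does not follow from the eigenvalue case as a black box, and your explicit argument for it (equality at the maximum forcing $u(y)=u(x_0)$ at neighbors, propagation by connectedness, then vanishing at infinity killing the constant) is precisely the strategy the paper sketches in Remark~\ref{rem:no-harm-in-l2} and delegates to the cited references.
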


\begin{remark}\label{rem:no-harm-in-l2}
	The proof of Theorem~\ref{thm:no-harm-in-l2} proceeds by selecting a nonzero function $u$ and observing that (since it is chosen from $C_0(G)$, $\ell^2(G)$, or some similar set) that a nonzero function in the given set must have a local maximum; this is then used to show that $\Lap u=0$ iff $u$ is constant. This result is well known and a proof may be found in \cite{KemenySnellKnapp}, \cite{Woess09,Woess00}, or \cite{LevPerWil08} (the last source requires a slight adaptation for infinite networks). 
	
	It is less well-known, however, that this same strategy can be adapted to show \Lap is essentially self-adjoint on $\ell^2(G)$, and this is the content of the following theorem from the Ph.D. thesis of Radek Wojciechowski. We include the proof here for completeness and to adapt the notation to the present setting.
\end{remark}

\begin{theorem}[{$\negsp[5]$ \cite[Thm.~1.3.1]{Woj07}}]\label{thm:no-defect}
	There is no nonzero $u \in C_0(G)$ which satisfies the pointwise equation $\Lap u(x) = -u(x)$ at every $x \in G$.
\end{theorem}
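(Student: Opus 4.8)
The plan is to prove this as a discrete maximum principle, exploiting the fact that the sign convention in $\Lap u(x) = \sum_{y \nbr x} \cond_{xy}(u(x)-u(y))$ makes $\Lap u$ nonnegative at any point where $u$ attains a maximum, which is incompatible with the equation $\Lap u = -u$ at a positive maximum. First I would argue by contradiction: suppose $u \in C_0(\Graph)$ is nonzero and satisfies $\Lap u(x) = -u(x)$ at every vertex. Since $\Lap$ is linear, $-u$ solves the same equation, and a nonzero function must be strictly positive somewhere after possibly passing from $u$ to $-u$. Thus, replacing $u$ by $-u$ if necessary, I may assume $M := \sup_{x \in \Graph} u(x) > 0$.

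The next step is to locate a vertex at which this positive supremum is actually attained, and here the hypothesis $u \in C_0(\Graph)$ does the work. Because $u$ vanishes at infinity, each superlevel set $\{x \in \Graph : u(x) \geq \tfrac12 M\}$ is finite and (since $M>0$) nonempty; the supremum $M$ is therefore attained on this finite set, say at $x_0$, so that $u(x_0) = M > 0$ and $u(x_0) \geq u(y)$ for all $y$. This is precisely the local-maximum phenomenon recorded in Definition~\ref{def:C_0(G)}, specialized to guarantee a genuine global maximum at a point of positive value.

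Finally, I would evaluate the defining equation at $x_0$. For every neighbor $y \nbr x_0$ one has $u(x_0) - u(y) \geq 0$ and $\cond_{x_0 y} > 0$, so every summand in
\begin{equation*}
	\Lap u(x_0) = \sum_{y \nbr x_0} \cond_{x_0 y}\bigl(u(x_0) - u(y)\bigr)
\end{equation*}
is nonnegative, forcing $\Lap u(x_0) \geq 0$. On the other hand, the hypothesis gives $\Lap u(x_0) = -u(x_0) = -M < 0$, a contradiction. (The degenerate case in which $x_0$ has no neighbors, or all neighbors share the value $M$, yields $\Lap u(x_0)=0$, which still contradicts $-M<0$.) Hence no nonzero such $u$ can exist.

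I do not expect a serious obstacle in this argument; the entire content lies in recognizing it as a maximum-principle statement rather than in any intricate estimate. The one step deserving genuine care is the attainment of the maximum: it is essential that the membership $u \in C_0(\Graph)$ (not merely boundedness) forces the positive supremum to be realized at an actual vertex, so that the pointwise sign comparison can be invoked. Once a positive global maximum is in hand, the nonnegativity of $\Lap$ there versus the strict negativity demanded by $\Lap u = -u$ delivers the contradiction immediately.
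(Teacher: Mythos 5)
Your proof is correct and is essentially the paper's argument: both locate a vertex where a positive extremum of $u$ is attained (guaranteed by $u \in C_0(G)$) and derive a sign contradiction from the equation $\Lap u = -u$ there. The only cosmetic difference is that the paper normalizes by $c(x)$ and phrases the maximum-principle inequality via the transition operator $P$ at a local maximizer, while you work directly with $\Lap u(x_0) \geq 0$ at a global maximizer; the two inequalities are identical up to the factor $c(x_0)$.
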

\begin{proof}
	From \eqref{eqn:Lap} and \eqref{eqn:c(x)}, such a solution \gf must satisfy
	\linenopax
	\begin{align}
		0 = \Lap \gf (x) + \gf(x) 
			&= \left(1 + c(x)\right)\gf(x) - c(x) \sum_{y \nbr x} p(x,y) \gf(y),
	\end{align}
	for $p(x,y) = \frac{c_{xy}}{c(x)}$. Thus we have
	\linenopax
	\begin{align}\label{eqn:P-defect-identity}
		(P\gf)(x) = \left(1 + \frac1{c(x)}\right)\gf(x),
	\end{align}
	where $(P\gf)(x) = \sum_{y \nbr x} p(x,y) \gf(y)$. If $\gf \in C_0(G)$ and \gf is nonzero, there must exist a vertex $x_1$ which is a local maximizer or minimizer of \gf, and we can assume without loss of generality that $\gf(x_1) > 0$ (see Definition~\ref{def:C_0(G)}). Since $x_1$ is a maximizer of \gf,
	\linenopax
	\begin{align*}
		\gf(x_1)  \geq  (P\gf)(x_1)  =  \left(1 + \frac1{c(x_1)}\right)\gf(x_1),
	\end{align*}
	which implies $0 \geq \frac1{c(x_1)} \gf(x_1)$. Since $c(x)>0$ for all $x \in G$,  this contradicts $\gf(x_1) > 0$.
\end{proof}

\begin{cor}\label{thm:ess-sa}
	$\Lap_{\ell^2}$ is essentially self-adjoint.
\end{cor}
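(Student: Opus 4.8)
The plan is to apply the standard deficiency-space criterion for essential self-adjointness, specialized to a semibounded operator. Recall first that $\Lap$ restricted to $\spn\{\gd_x\}_{x \in G}$ is densely defined, Hermitian, and non-negative: for finitely supported $u$ one has $\la u, \Lap u\ra_2 = \frac12 \sum_{x,y} \cond_{xy}(u(x)-u(y))^2 \geq 0$, exactly as in the computation of Lemma~\ref{thm:semibounded} (with the $\ell^2$ pairing in place of the energy pairing). For a densely defined symmetric operator $A$ that is bounded below by $0$, von Neumann's theory (see, e.g., \cite{DuSc88}) guarantees that $A$ is essentially self-adjoint if and only if the single deficiency equation $A^\ad f = -f$ has only the trivial solution, i.e.\ $\ker(\Lap^\ad + I) = \{0\}$. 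Thus it suffices to show there is no nonzero $u \in \dom \Lap^\ad$ with $\Lap^\ad u = -u$.

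Next I would identify the action of the adjoint on its domain. For $u \in \dom \Lap^\ad$, testing against $\gd_x \in \spn\{\gd_x\}$ and using that $\Lap \gd_x = \cond(x)\gd_x - \sum_{y \nbr x} \cond_{xy}\gd_y$ is finitely supported (by local finiteness), one computes
\linenopax
\begin{align*}
	(\Lap^\ad u)(x) = \la \gd_x, \Lap^\ad u\ra_2 = \la \Lap\gd_x, u\ra_2 = \sum_{y \nbr x} \cond_{xy}(u(x)-u(y)) = (\Lap u)(x).
\end{align*}
Hence $\Lap^\ad$ acts by the same pointwise formula \eqref{eqn:Lap} as the formal Laplacian on each element of its domain, and the deficiency equation $\Lap^\ad u = -u$ is equivalent to the pointwise equation $\Lap u(x) = -u(x)$ for every $x \in G$.

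Combining these two observations finishes the argument: a nonzero element of $\ker(\Lap^\ad + I)$ would be a nonzero $u \in \ell^2(G)$ satisfying $\Lap u(x) = -u(x)$ at every vertex. Since $\ell^2(G) \ci C_0(G)$ (Definition~\ref{def:C_0(G)}), such a $u$ would be a nonzero element of $C_0(G)$ solving $\Lap u = -u$, which is precisely what Theorem~\ref{thm:no-defect} forbids. Therefore $\ker(\Lap^\ad + I) = \{0\}$, and the closure $\Lap_{\ell^2}$ of $\Lap$ on $\spn\{\gd_x\}$ (Definition~\ref{def:domLap2}) is essentially self-adjoint.

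The substantive content—the nonexistence of $C_0(G)$ (and hence $\ell^2(G)$) solutions of $\Lap u = -u$—is already supplied by the maximum-principle argument of Theorem~\ref{thm:no-defect}, so the remaining work is purely the bookkeeping that reduces essential self-adjointness to that one deficiency equation and verifies that the adjoint is given by the formal Laplacian. The only step needing a little care is the justification that it suffices to test the deficiency equation at the single real value $-1$, rather than at $\pm i$; this is exactly the point at which the non-negativity (semiboundedness) of $\Lap$ is used.
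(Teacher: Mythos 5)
Your proof is correct and follows essentially the same route as the paper: the paper's ``defect vector'' condition $\la \gf + \Lap_{\ell^2}\gf, f\ra_{\ell^2} = 0$ for all finitely supported \gf is exactly your statement $f \in \ker(\Lap^\ad + I)$, unwound pointwise by testing against the Dirac masses, and both arguments then conclude via Theorem~\ref{thm:no-defect} using $\ell^2(G) \ci C_0(G)$. Your version merely makes explicit two steps the paper leaves implicit---that semiboundedness justifies testing the single deficiency equation at $-1$, and that $\Lap^\ad$ acts by the formal pointwise formula---which is a welcome clarification rather than a deviation.
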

\begin{proof}
	We show that $\Lap_{\ell^2}$ has no defect vector, using Theorem~\ref{thm:no-defect}. First, note that $f$ is a defect vector of $\Lap_{\ell^2}$ iff
	\linenopax
	\begin{align}\label{eqn:}
		&\la \gf + \Lap_{\ell^2} \gf, f\ra_{\ell^2} = 0, \qq \text{for all } \gf \in \spn\{\gd_x\}_{x \in G} \\
		&\iff \la \gd_x + \Lap_{\ell^2} \gd_x, f\ra_{\ell^2} = 0, \qq \text{for all } x \in G \\
		&\iff \left\la \gd_x + c(x) \gd_x - \sum_{y \nbr x} c_{xy} \gd_y, f \right\ra_{\ell^2} = 0, \qq \text{for all } x \in G \\
		&\iff f(x) + c(x) f(x) - \sum_{y \nbr x} c_{xy} f(y) = 0, \qq \text{for all } x \in G.
	\end{align}
	Then by Theorem~\ref{thm:no-defect}, $f=0$.
\end{proof}

\section{The inclusion operator and Krein extension}
\label{sec:inclusion-and-Krein}


\subsection{The inclusion operator}

We consider Dirac masses $\gd_x$ as elements of $\ell^2(G)$ and also as elements of \HE, and consequently, we can also consider $\spn\{\gd_x\}_{x \in G}$ (the set of \emph{finite} linear combinations of elements of the form $\gd_x$) as a subspace of $\ell^2(G)$ and also as a subspace of \HE. (Note that while $\ell^2$ is not contained in \HE; generally not even via a bounded inclusion operator, it is true that each Dirac mass $\gd_x$ is a function of finite energy; see Definition~\ref{def:d_x} and Remark~\ref{rem:Diracs-in-HE}.)

\begin{defn}\label{def:J}
	Define $J$ to be the inclusion operator mapping $\spn\{\gd_x\}_{x \in G} \ci \ell^2(G)$ to $\spn\{\gd_x\}_{x \in G} \ci \HE$, so that
	\linenopax
	\begin{align}
		J:\ell^2(G) \to \HE \qq\text{by}\qq J:\gd_x \mapsto \gd_x. 
	\end{align}
	Note that $\spn\{\gd_x\}_{x \in G}$ is dense in $\ell^2(G)$ but not in \HE. In particular, $J$ is densely defined. Typically $J$ is unbounded, and so \emph{not} defined on all of $\ell^2(G)$. However, it is shown in Lemma~\ref{thm:closable} that $J$ is closable.
\end{defn}

In Definition~\ref{def:domLap2}, we discussed the Laplace operator $\Lap_{\ell^2}$ acting on $\ell^2(G)$ and in Definition~\ref{def:domLap2}, we discussed the Laplace operator $\Lap_{\HE}$ acting on \HE. In this section, we consider the same formal operator, but now as a transform from \HE to $\ell^2(G)$.

\begin{defn}\label{def:M}
	Define $M:\HE \to \ell^2(G)$ to be the operator with dense domain 
	\linenopax
	\begin{align}\label{eqn:domM}
		\dom M := \spn\{v_x\}_{x \in G}
	\end{align}
	by the pointwise formula
	\linenopax
	\begin{align}\label{eqn:M}
		(Mu)(x) := \la \gd_x, u\ra_{\HE}.
	\end{align}
\end{defn}

\begin{lemma}\label{lem:M}
	For any $x \in G$, one has the pointwise identity $Mu(x) = \Lap u(x)$. In particular, $Mv_x = \gd_x - \gd_o$.
\end{lemma}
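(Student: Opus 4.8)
The plan is to obtain both assertions as essentially immediate consequences of the reproducing identity already recorded in Lemma~\ref{thm:<delta_x,v>=Lapv(x)}, so that no genuine computation is required. The starting observation is that the Hilbert space $\HE$ carries the energy inner product by definition, i.e. $\la \cdot,\cdot\ra_{\HE} = \la \cdot,\cdot\ra_\energy$. Hence for any $u \in \dom M = \spn\{v_x\}_{x \in G}$ and any $x \in G$, the defining formula \eqref{eqn:M} reads
\linenopax
\begin{align*}
	(Mu)(x) = \la \gd_x, u\ra_{\HE} = \la \gd_x, u\ra_\energy = \Lap u(x),
\end{align*}
where the last equality is exactly Lemma~\ref{thm:<delta_x,v>=Lapv(x)}. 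This establishes the pointwise identity $Mu(x) = \Lap u(x)$ for every $x$.

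For the second assertion I would specialize to $u = v_x$ and invoke the fact (Remark~\ref{rem:monotransience}, or \cite[Lemma~2.13]{DGG}) that the energy kernel elements are dipoles, namely $\Lap v_x = \gd_x - \gd_o$ as a pointwise identity on $G$. Combining this with the first part gives $Mv_x(y) = \Lap v_x(y) = (\gd_x - \gd_o)(y)$ for every $y \in G$, so that $Mv_x = \gd_x - \gd_o$ as functions. Since $\gd_x - \gd_o$ is a finite linear combination of Dirac masses it lies in $\ell^2(G)$, which also confirms that $M$ genuinely lands in the target space on the generators of its domain (and hence, by linearity, on all of $\dom M$).

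The only points requiring any comment, rather than calculation, are the identification of $\la\cdot,\cdot\ra_{\HE}$ with $\la\cdot,\cdot\ra_\energy$ and the observation that the resulting function is square-summable; neither presents a real obstacle. In short, there is no hard step here: the lemma is a repackaging of Lemma~\ref{thm:<delta_x,v>=Lapv(x)} together with the dipole property of $v_x$, and its value lies in recasting $M$ (acting from \HE into $\ell^2(G)$) as the same formal Laplacian, which is precisely what is needed to later identify $M$ with $J^\ad$ in Theorem~\ref{def:M=Lstar}.
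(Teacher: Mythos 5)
Your proof is correct and is in substance the same as the paper's, which simply says the lemma ``is immediate from \eqref{eqn:v_x}'': unpacking the paper's one-liner amounts to exactly the computation you perform via Lemma~\ref{thm:<delta_x,v>=Lapv(x)} (itself a direct consequence of the energy form) together with the dipole identity $\Lap v_x = \gd_x - \gd_o$. Your version is slightly more explicit in its citations, and correctly notes the extra observations (the identification $\la\cdot,\cdot\ra_{\HE}=\la\cdot,\cdot\ra_\energy$ and that $\gd_x-\gd_o\in\ell^2(G)$), but no new idea is involved.
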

\begin{proof}
	This is immediate from \eqref{eqn:v_x}.
\end{proof}

\begin{lemma}\label{thm:ScTandTcS}
	Suppose that for two Hilbert spaces \sH and \sK, we have operators $S:\sH \to \sK$ and $T:\sK \to \sH$ with dense domains $\dom S \ci \sH$ and $\dom T \ci \sK$. Then the following are equivalent:
	\linenopax
	\begin{align}
		(i) \;& \la S \gf,\gy\ra_\sK = \la \gf, T \gy\ra_\sH \text{ for all } \gf \in \dom S \text{ and }\gy \in \dom T. \label{eqn:ScTandTcS-1} \\
		(ii) \;& S \ci T^\ad \text{ and } T \ci S^\ad. \label{eqn:ScTandTcS-2}
	\end{align}
	If (i) or (ii) holds, then $S$ and $T$ are closable. 
	Moreover, $TS$ is essentially self-adjoint in \sH and $ST$ is essentially self-adjoint in \sK.
\end{lemma}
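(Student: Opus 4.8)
The plan is to prove the equivalence (i) $\iff$ (ii) directly from the definition of the adjoint operator, then deduce closability from the fact that adjoints are always closed, and finally establish essential self-adjointness of $TS$ and $ST$ by a defect-vector argument.

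First I would prove (i) $\implies$ (ii). Suppose $\la S\gf,\gy\ra_\sK = \la \gf, T\gy\ra_\sH$ for all $\gf \in \dom S$, $\gy \in \dom T$. Fix $\gy \in \dom T$. The identity says that the linear functional $\gf \mapsto \la S\gf, \gy\ra_\sK$ equals $\gf \mapsto \la \gf, T\gy\ra_\sH$ on $\dom S$, which is precisely the statement that $\gy \in \dom S^\ad$ with $S^\ad \gy = T\gy$; hence $T \ci S^\ad$. By the symmetric roles of $S$ and $T$ (swapping and using conjugate symmetry of the inner products), the same identity read the other way gives $S \ci T^\ad$. Conversely, (ii) $\implies$ (i) is immediate: if $S \ci T^\ad$ and $\gf \in \dom S$, $\gy \in \dom T$, then $\la S\gf, \gy\ra_\sK = \la T^\ad\gf, \gy\ra_\sK = \la \gf, T\gy\ra_\sH$ by the defining property of the adjoint $T^\ad$ (using that $\gf \in \dom T^\ad$ and $\gy \in \dom T$).

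For closability: since $\dom T$ is dense in \sK, the adjoint $T^\ad$ is a well-defined closed operator, and (ii) gives $S \ci T^\ad$, so $S$ has a closed extension and is therefore closable. Symmetrically, $\dom S$ dense forces $T \ci S^\ad$ closed, so $T$ is closable. This is the routine part.

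\textbf{The main obstacle} will be the essential self-adjointness of $TS$ and $ST$. Here I would work with $TS$ on the domain $\{\gf \in \dom S : S\gf \in \dom T\}$; one checks $TS$ is symmetric and non-negative since $\la TS\gf, \gf\ra_\sH = \la S\gf, S\gf\ra_\sK \geq 0$ using (i). To get essential self-adjointness, I would show the deficiency spaces are trivial: it suffices to prove that $\ker((TS)^\ad + I) = \{0\}$, i.e. there is no nonzero $h \in \sH$ with $\la TS\gf + \gf, h\ra_\sH = 0$ for all $\gf$ in the domain. The key mechanism should parallel the $J^\ad J$ versus $JJ^\ad$ structure advertised in the introduction: a vector $h$ in the deficiency space of $TS$ would, via the relation $S \ci T^\ad$, $T \ci S^\ad$, produce a defect vector for the closure of $S$ or $T$ in a way that the adjoint relations force to vanish. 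Concretely, I expect the argument to reduce the deficiency equation for $TS$ to one showing that $Sh$ (suitably interpreted) satisfies a defect equation whose only solution is zero, with the symmetric statement handling $ST$. Because the specialization $S = M$, $T = J$ recovers the $\ell^2$ Laplacian (essentially self-adjoint by Corollary~\ref{thm:ess-sa}), I anticipate the general proof abstracts exactly the no-defect phenomenon of Theorem~\ref{thm:no-defect}, and the delicate point is verifying the domain interchanges $TS \ci (ST)^\ad$-type inclusions rigorously rather than merely formally.
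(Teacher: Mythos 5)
Your first two paragraphs are correct and are exactly the standard argument: fixing $\gy \in \dom T$, the identity in (i) exhibits $\gy \in \dom S^\ad$ with $S^\ad\gy = T\gy$, hence $T \ci S^\ad$ and symmetrically $S \ci T^\ad$; the converse is the defining property of the adjoint; and closability follows since adjoints are closed. The paper offers no proof of this lemma at all, deferring to \cite{DuSc88} and \cite{KadisonRingroseI}, and those references prove precisely these two assertions the way you do. Your observation that $TS$ is symmetric and non-negative on $\{\gf \in \dom S \suth S\gf \in \dom T\}$ is also correct.

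The genuine gap is your final paragraph, and it is not merely that you forecast an argument (``I expect,'' ``I anticipate'') rather than give one: the defect-vector strategy you propose cannot succeed, because the ``Moreover'' clause is false in the stated generality. The paper's own Theorem~\ref{thm:Lbar=Mstar} shows that under (ii), $TS$ is essentially self-adjoint \emph{if and only if} $\cl{S} = T^\ad$ --- an extra condition that (i)/(ii) do not supply. Indeed, take $S = T = A$ for any densely defined symmetric $A$ that is not essentially self-adjoint: (i) is just symmetry of $A$, yet essential self-adjointness of $TS = A^2$ would force $\cl{A} = A^\ad$, a contradiction. Concretely, $A = -i\,d/dx$ on $C_c^\iy(0,\iy) \ci L^2(0,\iy)$ gives $TS = -d^2/dx^2$ on $C_c^\iy(0,\iy)$, which has deficiency indices $(1,1)$. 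Abstractly, all one gets from (ii) is $TS \ci S^\ad\,\cl{S}$, where $S^\ad\,\cl{S}$ is self-adjoint by von Neumann's theorem --- so $TS$ has a canonical self-adjoint extension, not essential self-adjointness; the computation in the proof of Theorem~\ref{thm:Lbar=Mstar} shows that defect vectors $\gf_0$ of $TS$ (solving $(TS)^\ad\gf_0 = -\gf_0$) correspond exactly to nonzero elements of $\xG(T^\ad) \ominus \xG(\cl{S})$, and nothing in (i)/(ii) makes that complement vanish. Note also that the paper's logic runs in the \emph{opposite} direction from your plan: in Corollary~\ref{def:M=Lstar}, the essential self-adjointness of $MJ = \Lap_{\ell^2}$ is the input, supplied by the network-specific maximum-principle argument of Theorem~\ref{thm:no-defect}, and $\cl{J} = M^\ad$ is the output; the no-defect phenomenon does not abstract to arbitrary $S$, $T$. (A smaller unaddressed point: even dense-definedness of $\dom TS$ is not automatic from (i)/(ii).)
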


The proof of Lemma~\ref{thm:ScTandTcS} can be found in many classic references, e.g., \cite{DuSc88} or \cite{KadisonRingroseI}.

%
\begin{theorem}\label{thm:closable}
	For all $k \in \spn\{\gd_x\}_{x \in G}$ and for all $u \in \spn\{v_x\}_{x \in G}$, we have 
	\linenopax
	\begin{align}\label{eqn:M-iso}
		\la Jk,u\ra_{\HE} = \la k, Mu\ra_{\ell^2}.
	\end{align}
	In particular, $J$ and $M$ are closable.
\end{theorem}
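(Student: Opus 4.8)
The plan is to reduce the bilinear identity \eqref{eqn:M-iso} to the case of single basis vectors, verify it there using the reproducing kernel property, and then read off closability directly from Lemma~\ref{thm:ScTandTcS}. Since both sides of \eqref{eqn:M-iso} are bilinear in $(k,u)$ and the relevant domains are $\spn\{\gd_x\}$ and $\spn\{v_x\}$, I would first observe that it suffices to verify the identity when $k = \gd_x$ and $u = v_y$ for arbitrary $x,y \in G$.

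For the left-hand side, I would use that $J\gd_x = \gd_x$ by the definition of $J$ (now read as an element of \HE), so that
\linenopax
\begin{align*}
	\la J\gd_x, v_y\ra_{\HE} = \la \gd_x, v_y\ra_\energy = \Lap v_y(x),
\end{align*}
where the last step is Lemma~\ref{thm:<delta_x,v>=Lapv(x)}. Since $\Lap v_y = \gd_y - \gd_o$ by Remark~\ref{rem:monotransience}, evaluating at $x$ gives $\la J\gd_x, v_y\ra_{\HE} = \gd_{xy} - \gd_{xo}$. For the right-hand side, Lemma~\ref{lem:M} identifies $Mv_y = \gd_y - \gd_o$ in $\ell^2(G)$, so pairing against $\gd_x$ in the $\ell^2$ inner product yields $\la \gd_x, Mv_y\ra_{\ell^2} = \gd_{xy} - \gd_{xo}$ as well. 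The two quantities coincide, which establishes \eqref{eqn:M-iso} on basis vectors and hence, by bilinearity, on all of $\spn\{\gd_x\} \times \spn\{v_x\}$.

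Once the identity is in hand, closability is immediate by invoking Lemma~\ref{thm:ScTandTcS} with $\sH = \ell^2(G)$, $\sK = \HE$, $S = J$, and $T = M$. The density of $\dom J = \spn\{\gd_x\}$ in $\ell^2(G)$ and of $\dom M = \spn\{v_x\}$ in \HE is recorded in Definitions~\ref{def:J} and~\ref{def:M}, while hypothesis~\eqref{eqn:ScTandTcS-1} of that lemma is precisely \eqref{eqn:M-iso}. The lemma then yields that $J$ and $M$ are both closable (and moreover that $J \ci M^\ad$ and $M \ci J^\ad$).

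I expect essentially no genuine obstacle here; the only thing requiring care is the bookkeeping around the symbol $\gd_x$, which denotes two distinct vectors---one in $\ell^2(G)$ and one in \HE---paired via two distinct inner products, so one must track which Hilbert space each appearance of $\gd_x$ inhabits. The substantive content of the theorem is that the energy pairing $\la \gd_x, v_y\ra_\energy$ and the $\ell^2$ pairing $\la \gd_x, Mv_y\ra_{\ell^2}$ are both computed by the same Laplacian action, which is exactly what makes $J$ and $M$ formal adjoints of one another and thereby forces closability.
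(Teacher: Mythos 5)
Your proposal is correct and follows essentially the same route as the paper: reduce \eqref{eqn:M-iso} to basis vectors $\gd_x$ and $v_y$, verify the identity by a one-line reproducing-kernel computation, and then obtain closability from Lemma~\ref{thm:ScTandTcS}. The only cosmetic difference is that you evaluate $\la \gd_x, v_y\ra_{\HE}$ via Lemma~\ref{thm:<delta_x,v>=Lapv(x)} and the dipole identity $\Lap v_y = \gd_y - \gd_o$, whereas the paper applies the reproducing property \eqref{eqn:v_x} directly; both yield $\gd_{xy}-\gd_{xo}$ and the arguments are otherwise identical.
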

\begin{proof}
	We only need to check \eqref{eqn:M-iso} for $k=\gd_y$ and $u=v_x$, and \eqref{eqn:v_x} gives
	\linenopax
	\begin{align}
		\la J \gd_y, v_x\ra_{\HE} 
		= \gd_y(x) - \gd_y(o)
		= \gd_x(y) - \gd_o(y)
		= \la \gd_y, \gd_x - \gd_o \ra_{\ell^2},
		= \la \gd_y, Mv_x \ra_{\ell^2},
	\end{align}
	where the last equality follows by Lemma~\ref{lem:M}. Now Lemma~\ref{thm:ScTandTcS} implies $J \ci M^\ad$ and $M \ci J^\ad$. Since $J$ has dense domain and $J \ci M^\ad$, it must also be the case that $M^\ad$ has dense domain. But $M^\ad$ has dense domain if and only if $M$ is closable. The same argument shows that $J$ is also closable.
\end{proof}

\begin{remark}\label{rem:clJ}
	Theorem~\ref{thm:closable} shows that $J$ is closable, so let us momentarily denote the closure of $J$ by $\cl J$ (but see Definition~\ref{def:J=clJ}). 
	Since $J \ci M^\ad$, it is clear that $\cl J \ci M^\ad$ as well. Our next result (Theorem~\ref{thm:Lbar=Mstar}) implies that in fact there is equality between the two operators (see Corollary~\ref{def:M=Lstar}). However, Theorem~\ref{thm:Lbar=Mstar} holds true for more general pairs of operators. For further background on closability, see \cite{MR0052042,MR600620}.
\end{remark}

\begin{theorem}\label{thm:Lbar=Mstar}
	Under the hypotheses of Theorem~\ref{thm:ScTandTcS}, assume further that $S \ci T^\ad$ and $T \ci S^\ad$. Then $TS$ is essentially self-adjoint if and only if $\cl{S} = T^\ad$ (where $\cl S$ is the closure of $S$).
\end{theorem}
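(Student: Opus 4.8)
The plan is to produce a canonical self-adjoint extension of $TS$ and reduce the entire statement to an equality of that extension with $(TS)^\ad$. First I would note that $TS$ is symmetric and nonnegative. Indeed, for $\gf,\gy$ in $\dom(TS)=\{\gf\in\dom S:S\gf\in\dom T\}$, the hypothesis $T\ci S^\ad$ gives $TS\gf=T(S\gf)=S^\ad(S\gf)$, so $\la TS\gf,\gy\ra_\sH=\la S\gf,S\gy\ra_\sK$; in particular $\la TS\gf,\gf\ra_\sH=\|S\gf\|_\sK^2\ge 0$. By Lemma~\ref{thm:ScTandTcS} the operator $S$ is closable; set $A:=\cl S$, so that $A$ is closed and densely defined with $A^\ad=S^\ad\supseteq T$. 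Von Neumann's theorem then supplies the self-adjoint nonnegative operator $C:=A^\ad A=(\cl S)^\ad\cl S$ on $\sH$.

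Next I would check that $C$ is an extension of $TS$. If $\gf\in\dom(TS)$ then $\gf\in\dom S\ci\dom A$ and $A\gf=S\gf\in\dom T\ci\dom A^\ad$, so $\gf\in\dom(A^\ad A)$ and $C\gf=A^\ad(A\gf)=T(S\gf)=TS\gf$. Thus $TS\ci C$, and taking closures and adjoints yields the chain $\cl{TS}\ci C\ci(TS)^\ad$ (the right inclusion because $C=C^\ad\ci(TS)^\ad$). Since $TS$ is essentially self-adjoint exactly when $\cl{TS}=(TS)^\ad$, and since $\cl{TS}$ is closed (so $(\cl{TS})^{\ad\ad}=\cl{TS}$), the identity $(TS)^\ad=C$ forces $\cl{TS}=(\cl{TS})^{\ad\ad}=C^\ad=C$; conversely essential self-adjointness collapses the chain to $\cl{TS}=C=(TS)^\ad$. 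Hence the first reduction is clean: $TS$ is essentially self-adjoint if and only if $(TS)^\ad=C$, equivalently (as $C\ci(TS)^\ad$ always) if and only if the defect space $\ker(I+(TS)^\ad)=\operatorname{ran}(I+TS)^\perp$ is trivial.

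It then remains to prove the heart of the matter, the equivalence $(TS)^\ad=C\iff\cl S=T^\ad$. I would first record that $\cl S=T^\ad$ is equivalent to $\cl T=S^\ad$ (apply $\ad$ and use that $S,T$ are closable, so $(\cl S)^\ad=S^\ad$ and $(T^\ad)^\ad=\cl T$); in these terms the condition says precisely that $\dom T$ is a core for $S^\ad=A^\ad$. For the implication $(\Leftarrow)$, assuming $\cl T=A^\ad$, I would show $\ker(I+(TS)^\ad)=\{0\}$, equivalently that $\dom(TS)$ is an operator core for $C$: the description $\dom C=\{\gf\in\dom A:A\gf\in\dom A^\ad=\dom\cl T\}$, together with the facts that $\dom S$ is a core for $A=\cl S$ and $\dom T$ is a core for $A^\ad$, should feed a graph-norm approximation of an arbitrary element of $\dom C$ by elements of $\dom(TS)$. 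For $(\Rightarrow)$ I would argue contrapositively: if $\cl S\ne T^\ad$, equivalently $\cl T\ne A^\ad$, then $\dom T$ is not a core for $A^\ad$, and I would manufacture from a vector of $\dom A^\ad$ lying outside the graph-closure of $\dom T$ a nonzero $\gy$ with $(TS)^\ad\gy=-\gy$, witnessing $(TS)^\ad\ne C$ and hence the failure of essential self-adjointness.

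The main obstacle is this last equivalence, and within it the direction $(\Leftarrow)$. The difficulty is that $\dom(TS)=\{\gf\in\dom S:S\gf\in\dom T\}$ is constrained both at the source (through $\dom S$) and in the image (through $\dom T$), so a defect vector $\gy\in\ker(I+(TS)^\ad)$ need not lie in $\dom A$, and the clean graph identity $\la TS\gf,\gy\ra=\la S\gf,S\gy\ra$ is unavailable (it presumes $\gy\in\dom S$). Controlling the defect therefore requires using the two core properties simultaneously — $\dom S$ a core for $A$ and $\dom T$ a core for $A^\ad$ — ideally organized through von Neumann's orthogonal decomposition of $\sH\oplus\sK$ into the graph of $A$ and its complement, which is exactly the mechanism that both produces $C$ and detects when the restricted domain $\dom(TS)$ fails to be a core.
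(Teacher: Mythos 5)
Your reductions are sound as far as they go: $TS$ is indeed nonnegative and Hermitian, $C=(\cl S)^\ad\,\cl S$ is a self-adjoint extension of $TS$ by von Neumann's theorem, and the chain $\cl{TS}\ci C\ci (TS)^\ad$ correctly reduces essential self-adjointness of $TS$ to the identity $(TS)^\ad=C$, i.e.\ to triviality of $\ker(I+(TS)^\ad)$. But your proof stops exactly at the heart of the matter: both directions of the equivalence $(TS)^\ad=C \iff \cl S=T^\ad$ are left as intentions (``should feed a graph-norm approximation,'' ``I would manufacture\dots''), and the plan you describe for $(\Leftarrow)$ would fail as stated. The two separate core properties --- $\dom S$ a core for $A=\cl S$, and $\dom T$ a core for $A^\ad$ --- say nothing about how the two domains mesh, and they cannot force $\dom(TS)=\{\gf\in\dom S \suth S\gf\in\dom T\}$ to be a core for $C$. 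Concretely, let $\sH=\sK=L^2(\bR)$ and let $S$ and $T$ both be restrictions of the identity to dense subspaces $D$ and $E$ with $D\cap E=\{0\}$ (such pairs exist, by a classical result on dense subspaces with trivial intersection): then the hypotheses of Lemma~\ref{thm:ScTandTcS} hold, $S\ci T^\ad$ and $T\ci S^\ad$, and $\cl S=\id=T^\ad$, yet $\dom(TS)=D\cap E=\{0\}$, so no graph-norm approximation from $\dom(TS)$ can reach anything. This shows the $(\Leftarrow)$ direction genuinely requires some compatibility between $\dom S$ and $\dom T$ beyond the stated hypotheses (in the application at hand it is available, since $J(\dom J)\ci\dom M$ by \eqref{eqn:d=v}); the difficulty you flag in your closing paragraph --- that a defect vector of $TS$ need not lie in $\dom T^\ad$ --- is exactly this point, and it cannot be repaired by the two core properties alone.

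By contrast, the paper proves the theorem by a short, direct graph computation, never introducing $C$ and using no approximation argument; notably, the direction actually consumed downstream (Corollary~\ref{def:M=Lstar} needs only ``$TS$ essentially self-adjoint $\Rightarrow \cl S=T^\ad$'') comes out cleanly. Since $S\ci T^\ad$ and $\xG(T^\ad)$ is closed, $\cl S=T^\ad$ holds iff $\xG(T^\ad)\ominus\xG(S)=0$ in $\sH\oplus\sK$; unwinding the orthogonality, a vector $(\gf_0,T^\ad\gf_0)$ in this complement satisfies $\la\gf_0,\gy\ra_\sH+\la T^\ad\gf_0,S\gy\ra_\sK=0$ for all $\gy\in\dom S$, i.e.\ $T^\ad\gf_0\in\dom S^\ad$ and $S^\ad T^\ad\gf_0=-\gf_0$, whence $\gf_0\in\dom(TS)^\ad$ and $(TS)^\ad\gf_0=-\gf_0$. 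Thus a nonzero complement vector is a nonzero defect vector of the nonnegative operator $TS$, which essential self-adjointness forbids. This is precisely the ``orthogonal decomposition of $\sH\oplus\sK$'' mechanism your final paragraph gestures at without executing; to complete your write-up you would need to carry out this identification, and for the converse direction you would still have to confront the membership problem ($\gf_0\in\dom T^\ad$) that your counexample-sensitive instincts correctly located as the obstacle.
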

\begin{proof}
	Let $\xG(S)$ denote the graph of the operator $S$, and let $V \ominus W$ denote the orthogonal complement of $W$ in the Hilbert space $V$. Since $S \ci T^\ad$ by \eqref{eqn:ScTandTcS-2}, the condition $\cl{S} = T^\ad$ means that $\xG(T^\ad) \ominus \xG(S) = 0$. Expressed in terms of $\sH \oplus \sK$, the latter is equivalent to 
	\linenopax
	\begin{align}
		\left\la 
		\left[\begin{array}{c} \gf_0 \\ T^\ad \gf_0 \end{array}\right], 
		\left[\begin{array}{c} \gy \\ T \gy \end{array}\right] \right\ra
		=0, \q\text{for all } \gy \in \dom T,
	\end{align}
	which implies that $\gf_0=0$. This, in turn, is equivalent to 
	\linenopax
	\begin{align}\label{eqn:}
		\la \gf_0, \gy\ra_\sH + \la T^\ad \gf_0, S\gy\ra_\sK = 0, \q \text{for all } \gy \in S,
	\end{align}
	i.e., that $T^\ad \gf_0 \in \dom S^\ad$ and $S^\ad T^\ad \gf_0 = -\gf_0$. In other words,
	\linenopax
	\begin{align}\label{eqn:defect-appearance}
		\gf_0 \in \dom (TS)^\ad
		\q\text{and}\q
		(TS)^\ad \gf_0 = -\gf_0.
	\end{align}
	Thus $\gf_0$ is a defect vector of the Hermitian operator $TS$ and \eqref{eqn:defect-appearance} has no nontrivial solutions if and only if $TS$ is essentially self-adjoint.
\end{proof}

We now apply the general result of Theorem~\ref{thm:Lbar=Mstar} to the specific operators at hand.

\begin{cor}\label{def:M=Lstar}
	For $J:\dom(J)\ci\ell^2(G) \to \HE$ and $M:\dom(M)\ci\HE\to\ell^2(G)$, we have $\cl J = M^\ad$ and $J^\ad = \cl M$.
\end{cor}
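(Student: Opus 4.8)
The plan is to apply Theorem~\ref{thm:Lbar=Mstar} exactly once, to the pair $(S,T)=(J,M)$, and then to deduce the companion identity purely by taking adjoints. First, recall that Theorem~\ref{thm:closable} already supplies $J \ci M^\ad$ and $M \ci J^\ad$, so the extra hypotheses of Theorem~\ref{thm:Lbar=Mstar} hold in both orderings of the pair. Taking $\sH = \ell^2(G)$, $\sK = \HE$, $S = J$ and $T = M$, the composite $TS = MJ$ acts on $\ell^2(G)$, and Theorem~\ref{thm:Lbar=Mstar} tells us that the desired equality $\cl J = M^\ad$ is \emph{equivalent} to the essential self-adjointness of $MJ$. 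Thus the whole corollary rests on identifying the operator $MJ$.

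Second, I would compute $MJ$ on its natural domain. Since $\dom J = \spn\{\gd_x\}_{x\in G}$ and $J$ fixes each $\gd_x$, while Lemma~\ref{thm:d=v} shows each $\gd_x$ lies in $\spn\{v_x\}_{x\in G} = \dom M$, the composite is defined on all of $\spn\{\gd_x\}_{x\in G}$, so that $\dom(MJ) = \spn\{\gd_x\}_{x\in G}$. On this domain Lemma~\ref{lem:M} gives the pointwise identity
\[
	MJ\gd_x = M\gd_x = \Lap \gd_x,
\]
so $MJ$ is precisely the pointwise Laplacian $\Lap$ restricted to $\spn\{\gd_x\}_{x\in G}$. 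But this is exactly the operator whose graph closure defines $\Lap_2$ in Definition~\ref{def:domLap2}, and Corollary~\ref{thm:ess-sa} asserts that $\Lap_2$ is essentially self-adjoint. Hence $MJ$ is essentially self-adjoint, and Theorem~\ref{thm:Lbar=Mstar} delivers $\cl J = M^\ad$.

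Finally, for the second identity I would simply take adjoints. Because $J$ and $M$ are densely defined and closable (Theorem~\ref{thm:closable}), one has $(\cl J)^\ad = J^\ad$ and $(M^\ad)^\ad = M^{\ad\ad} = \cl M$. Applying $(\cdot)^\ad$ to the identity $\cl J = M^\ad$ therefore yields $J^\ad = \cl M$ at once, with no further work.

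The main obstacle — really the only step carrying content — is the identification of $MJ$ with $\Lap_2$ on its core: one must verify both that $\dom(MJ)$ is exactly $\spn\{\gd_x\}_{x\in G}$ (so that no larger domain sneaks in) and that $MJ$ acts by the pointwise Laplacian there. Everything after that is a citation (Corollary~\ref{thm:ess-sa}) together with the formal adjoint manipulation. It is worth noting that we deliberately route through $MJ$ rather than $JM$: the essential self-adjointness we need is precisely the one already established on $\ell^2(G)$, whereas verifying it directly for $JM$ on \HE would be considerably less transparent, and the adjoint step then recovers the \HE-side statement for free.
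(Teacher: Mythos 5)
Your proposal is correct and follows essentially the same route as the paper's own proof: the paper likewise identifies $MJ$ with $\Lap_{\ell^2}$ on $\spn\{\gd_x\}_{x\in G}$, invokes its essential self-adjointness to get $\cl J = M^\ad$ from Theorem~\ref{thm:Lbar=Mstar}, and then obtains $J^\ad = M^{\ad\ad} = \cl M$ by taking adjoints. The only differences are cosmetic: you spell out the domain verification $\dom(MJ)=\spn\{\gd_x\}_{x\in G}$ via Lemma~\ref{thm:d=v}, and you cite the paper's internal Corollary~\ref{thm:ess-sa} where the paper cites \cite{SRAMO}.
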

\begin{proof}
	Since $MJ = \Lap_{\ell^2}$ and $\Lap_{\ell^2}$ is essentially self-adjoint by \cite{SRAMO}, we have $\cl J = M^\ad$ immediately from Theorem~\ref{thm:Lbar=Mstar}, whence taking adjoints gives $J^\ad = M^{\ad\ad} = \cl M$.
\end{proof}

\begin{remark}\label{rem:funky-adjoint}
	Lemma~\ref{lem:M} and Corollary~\ref{def:M=Lstar} show that the adjoint of an inclusion operator can be a very different operator (in this case, the Laplacian). This may seem strange at first, but is simply an reflection of the different inner products in the two Hilbert spaces. For another example (in perhaps a more familiar context), see Example~\ref{exm:unboundedly-contained}.
\end{remark}

\begin{cor}\label{def:JJstar}
	$\cl J  J^\ad$ is a self-adjoint extension of \LapE.
\end{cor}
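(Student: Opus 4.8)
The statement is of the form ``$A A^\ad$ is a self-adjoint extension,'' so the self-adjointness should come essentially for free from von Neumann's theorem, and the real content is identifying $\cl J J^\ad$ as an extension of \LapE on a suitable core. First I would dispose of self-adjointness. Since $J$ is closable (Theorem~\ref{thm:closable}), its closure $\cl J:\ell^2(G)\to\HE$ is a closed, densely defined operator, and the adjoint of a closable operator agrees with the adjoint of its closure, so $(\cl J)^\ad = J^\ad$. Hence $\cl J J^\ad = \cl J (\cl J)^\ad$, and by von Neumann's theorem (see, e.g., \cite{DuSc88}) the operator $A A^\ad$ attached to any closed, densely defined $A$ is self-adjoint and non-negative. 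Taking $A = \cl J$ shows at once that $\cl J J^\ad$ is a non-negative self-adjoint operator on \HE.

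Next I would pin down the action of $J^\ad$ and check agreement with \LapE on a core. By Corollary~\ref{def:M=Lstar} we have $J^\ad = \cl M$, and for any $y\in G$, $\la J\gd_y, u\ra_\HE = \la \gd_y, u\ra_\HE = \Lap u(y)$ by Lemma~\ref{thm:<delta_x,v>=Lapv(x)}; boundedness of this functional in the $\ell^2$ inner product is precisely the requirement $\Lap u \in \ell^2(G)$, so that $\dom J^\ad = \{u \in \HE : \Lap u \in \ell^2(G)\}$ with $J^\ad u = \Lap u$. Now let $u \in \spn\{\monov\}_{x\in G}$, the core generating \LapE by graph closure (Definition~\ref{def:domLapE}). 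Since $\Lap \monov = \gd_x$ (Lemma~\ref{thm:Lap-mono-Kron}), the function $\Lap u$ is a finite linear combination of Dirac masses, hence lies in $\ell^2(G)$; thus $u \in \dom J^\ad$ and $J^\ad u = \Lap u \in \spn\{\gd_x\}_{x\in G}$. But $\spn\{\gd_x\}_{x\in G} = \dom J \ci \dom \cl J$, on which $\cl J$ acts as the identity inclusion, so $u \in \dom(\cl J J^\ad)$ and
\begin{align*}
  \cl J J^\ad u = \cl J(\Lap u) = \Lap u = \LapE u.
\end{align*}

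Finally I would conclude by closedness. The computation above shows $\cl J J^\ad$ agrees with $\Lap$ on the core $\spn\{\monov\}$; since $\cl J J^\ad$ is self-adjoint it is in particular closed, and a closed operator containing $\Lap|_{\spn\{\monov\}}$ must contain its graph closure \LapE. Hence $\LapE \ci \cl J J^\ad$, giving the desired self-adjoint extension. The step I expect to require the most care is the domain bookkeeping: one must check that the \emph{entire} core of \LapE (not merely its generators) lands in $\dom(\cl J J^\ad)$, and handle the transient/recurrent dichotomy, where the natural generators differ (monopoles $\monov$ versus dipoles $v_x$). This last point is benign, however, since $\Lap v_x = \gd_x - \gd_o$ is also finitely supported, so the identification $J^\ad u = \Lap u \in \spn\{\gd_x\}$ and the ensuing argument go through verbatim.
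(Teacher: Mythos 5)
Your proof is correct, and its skeleton is the same as the paper's (largely implicit) argument: the paper states this corollary without a proof, with self-adjointness supplied by von Neumann's theorem in Remark~\ref{rem:JJ*-and-J*J} and the extension property left to follow from Corollary~\ref{def:M=Lstar} ($J^\ad = \cl M$) together with Lemma~\ref{lem:M}; the only explicit verification appears later, in Theorem~\ref{thm:JJ-Lap}(ii), where $JJ^\ad v_x = \gd_x - \gd_o = \LapE v_x$ is checked on the dipoles $v_x$ alone. Two features of your write-up go beyond what the paper records, both to the good. First, your direct identification $\dom J^\ad = \{u \in \HE \suth \Lap u \in \ell^2(G)\}$ with $J^\ad u = \Lap u$ is a self-contained Riesz-representation argument in $\ell^2(G)$ (testing against the dense span of Dirac masses pins down the representing vector pointwise), and it does not route through the essential self-adjointness of $\Lap_{\ell^2}$, which is what powers Corollary~\ref{def:M=Lstar} via Theorem~\ref{thm:Lbar=Mstar}; your opening citation of Corollary~\ref{def:M=Lstar} is therefore superfluous to your own argument. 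Second, and more substantively, you verify agreement on the monopole core $\spn\{\monov\}_{x \in G}$ that Definition~\ref{def:domLapE} actually uses, whereas the paper's computation in Theorem~\ref{thm:JJ-Lap}(ii) covers only $\spn\{v_x\}_{x \in G}$; since $\monov = w_o + v_x$, a closed extension of $\Lap$ restricted to $\spn\{v_x\}_{x \in G}$ does not a priori contain $\Lap$ on the larger span until one also checks that $w_o \in \dom(\cl J \, J^\ad)$ with $\cl J \, J^\ad w_o = \gd_o$ --- which your argument handles automatically, because $\Lap \monov = \gd_x$ is finitely supported, hence in $\dom J \ci \dom \cl J$. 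So the ``domain bookkeeping'' you flagged as the delicate step was exactly the right concern, and your proof settles it more completely than the paper's own text does.
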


\begin{rem}\label{rem:JJ*-and-J*J}
	If $L=\cl J$, then since $L$ is closed, a theorem of von Neumann implies that both operators $LL^\ad$ and $L^\ad L$ are self-adjoint. In the next section, we will see in fact that $LL^\ad$ is the Krein extension of \LapE.
\end{rem}

\subsection{The Krein extension}

In Corollary~\ref{thm:ess-sa} (see also Remark~\ref{rem:domLap2}), we have shown that the graph Laplacian is essentially self-adjoint on its natural domain in $\ell^2(G)$, and in previous work \cite{SRAMO} we showed that the graph Laplacian may not be not essentially self-adjoint on its natural domain in \HE. Furthermore, we constructed the Friedrichs extension of the Laplacian on \HE in \cite{Friedrichs}. In this section, we take a look at the Krein extension.

\begin{defn}\label{def:Krein}
	 Let $S$ be a semibounded operator which is not self-adjoint, and suppose that $S^\ad$ has a nontrivial kernel. Let $T$ be a self-adjoint extension of $S$ which is defined on $\ker S^\ad$. If $\ker S^\ad \ci \ker T$, then $T$ is called the \emph{Krein extension} of $S$, and we denote it by $T=S^{(Kr)}$. By a theorem of Krein's, $T$ is the unique maximal self-adjoint extension of $S$, with respect to the usual ordering on self-adjoint extensions; see \cite{DuSc88} for details.
\end{defn}

\begin{defn}\label{def:J=clJ}
	Henceforth, to lighten the notation, we will abuse notation and denote the closure of $J$ again by $J$:
	\linenopax
	\begin{align}\label{eqn:J=clJ}
		J = \cl J.
	\end{align}
\end{defn}

\begin{lemma}\label{thm:J-adjoint}
	$J^\adj:\HE \to \ell^2(G)$ is defined by $J^\adj:v_x\mapsto \gd_x-\gd_o$.
\end{lemma}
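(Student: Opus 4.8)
The plan is to read off the action of $J^\ad$ on the energy kernel directly from the identification of $J^\ad$ already in hand. By Definition~\ref{def:J=clJ} the symbol $J$ now denotes the closure of the inclusion, and since passing to the closure does not change the adjoint, $J^\ad = (\cl J)^\ad$ is precisely the operator computed in Corollary~\ref{def:M=Lstar}, namely $J^\ad = \cl M$. Because $\cl M$ extends $M$ and every energy kernel element $v_x$ lies in $\dom M = \spn\{v_x\}_{x\in G}$, we obtain $J^\ad v_x = \cl M v_x = M v_x$. Finally, Lemma~\ref{lem:M} evaluates this to $M v_x = \gd_x - \gd_o$, which is exactly the claim.

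Should one prefer a self-contained argument that does not route through Corollary~\ref{def:M=Lstar}, I would instead verify the defining identity for the adjoint by hand. Using that the adjoint of an operator coincides with the adjoint of its closure, it suffices to test against the pre-closure domain $\spn\{\gd_y\}_{y\in G}$. Thus I would check, for each $y \in G$, that $\la J\gd_y, v_x\ra_{\HE} = \la \gd_y, \gd_x - \gd_o\ra_{\ell^2}$. By the reproducing identity \eqref{eqn:v_x} the left-hand side equals $\la v_x, \gd_y\ra_\energy = \gd_y(x) - \gd_y(o) = \gd_{xy} - \gd_{yo}$, while the right-hand side is manifestly $\gd_{xy} - \gd_{yo}$; the two agree. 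Since this holds for every generator $\gd_y$ of the full domain of the uncompleted operator, $v_x \in \dom J^\ad$ and $J^\ad v_x = \gd_x - \gd_o$.

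There is essentially no obstacle here: the lemma is a direct consequence of the machinery already established, and its role is simply to record the explicit formula for later use (in identifying $JJ^\ad$ as the Krein extension). The only points requiring a word of care are the interchange between an operator and its closure when taking adjoints, and the observation that testing against $\spn\{\gd_y\}_{y\in G}$ is a verification against the \emph{entire} domain of the uncompleted operator rather than a mere dense subset, so that no limiting or continuity argument is needed. Everything else reduces to the reproducing property \eqref{eqn:v_x} and Lemma~\ref{lem:M}.
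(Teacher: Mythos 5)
Your proposal is correct, and your fallback argument is in fact the paper's own proof: the paper establishes the lemma by exactly the direct verification in your second paragraph, computing $\la J^\adj v_x, \gd_y\ra_{\ell^2} = \la v_x, J\gd_y\ra_{\HE} = \gd_y(x)-\gd_y(o) = \la \gd_x-\gd_o,\gd_y\ra_{\ell^2}$ via the reproducing property \eqref{eqn:v_x}. Your primary route, however, is genuinely different: rather than computing anything, you quote Corollary~\ref{def:M=Lstar} to get $J^\ad = \cl{M}$ (correctly noting that passing to the closure does not change the adjoint) and then evaluate $Mv_x = \gd_x - \gd_o$ by Lemma~\ref{lem:M}. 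That route is shorter but rests on heavier machinery: Corollary~\ref{def:M=Lstar} depends on Theorem~\ref{thm:Lbar=Mstar} and on the essential self-adjointness of $\Lap_{\ell^2}$, whereas the paper's direct computation is self-contained and needs only \eqref{eqn:v_x}. One place where you are actually more careful than the paper: the paper's proof opens with ``since the reproducing kernel $\{v_x\}_{x\in G}$ is dense in \HE, it suffices to compute,'' which is not quite the relevant justification --- what is needed is precisely your observation that $(\cl{J_0})^\ad = J_0^\ad$ for the uncompleted inclusion $J_0$ with $\dom J_0 = \spn\{\gd_y\}_{y \in G}$, so that verifying the adjoint identity against the generators $\gd_y$ (hence, by finite linearity, against the entire domain of $J_0$) establishes both $v_x \in \dom J^\ad$ and the formula without any limiting argument. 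Both of your routes are sound; the second coincides with the paper's, and the first is a legitimate economy given the results already proved.
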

\begin{proof}
	Since the reproducing kernel $\{v_x\}_{x \in G}$ is dense in \HE, it suffices to compute
	\linenopax
	\begin{align}
		\la J^\adj v_x, \gd_y\ra_{\ell^2} 
		&= \la v_x, J\gd_y\ra_{\HE} 
		= \la v_x, \gd_y\ra_{\HE} 
		= \gd_y(x)-\gd_y(o)
		= \gd_x(y) - \gd_o(y)
		= \la \gd_x - \gd_o, \gd_y\ra_{\ell^2}.
		\qedhere
	\end{align}
\end{proof}

\begin{lemma}\label{thm:kerJ*=Harm}
	$\ker J^\ad = \Harm$. 
\end{lemma}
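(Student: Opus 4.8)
The plan is to prove the statement $\ker J^\ad = \Harm$ by characterizing $\ker J^\ad$ via the reproducing kernel and the adjoint formula from Lemma~\ref{thm:J-adjoint}, then identifying this condition with the defining property of harmonic functions. First I would observe that since $\{v_x\}_{x \in G}$ has dense span in \HE (it is the reproducing kernel), an element $u \in \HE$ lies in $\ker J^\ad$ if and only if $\la J^\ad u, k\ra_{\ell^2} = 0$ for all $k$ in a dense subset of $\ell^2(G)$, equivalently, if and only if $\la u, J\gd_y\ra_{\HE} = 0$ for all $y \in G$. Since $J\gd_y = \gd_y$ (viewed in \HE), this becomes $\la u, \gd_y\ra_{\HE} = 0$ for every $y \in G$.

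The key step is then to invoke Lemma~\ref{thm:<delta_x,v>=Lapv(x)}, which gives $\la \gd_y, u\ra_\energy = \Lap u(y)$. Thus the condition $\la u, \gd_y\ra_{\HE} = 0$ for all $y \in G$ is precisely the statement that $\Lap u(y) = 0$ for every vertex $y$, i.e., that $u \in \Harm$ by Definition~\ref{def:Harm}. This gives the set equality directly. I would present the argument as a chain of equivalences:
\linenopax
\begin{align}
	u \in \ker J^\ad
	&\iff \la J^\ad u, \gd_y\ra_{\ell^2} = 0 \text{ for all } y \in G \notag \\
	&\iff \la u, J\gd_y\ra_{\HE} = 0 \text{ for all } y \in G \notag \\
	&\iff \la u, \gd_y\ra_{\HE} = 0 \text{ for all } y \in G \notag \\
	&\iff \Lap u(y) = 0 \text{ for all } y \in G \notag \\
	&\iff u \in \Harm. \notag
\end{align}

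The main subtlety to be careful about is the first equivalence: $J^\ad u = 0$ in $\ell^2(G)$ is equivalent to $\la J^\ad u, \gd_y\ra_{\ell^2} = 0$ for all $y$ because $\{\gd_y\}$ is an orthonormal basis for $\ell^2(G)$, so testing against all the $\gd_y$ detects the zero vector. One should also confirm that $u \in \dom J^\ad$ is not a separate constraint that narrows the kernel: here the relevant point is that $J$ is closed (by Definition~\ref{def:J=clJ} we have replaced $J$ by its closure), so $J^\ad$ is densely defined and the condition $\la u, J\gd_y\ra_{\HE} = \la J^\ad u, \gd_y\ra_{\ell^2}$ is exactly the adjoint relation tested on the core $\spn\{\gd_y\}$ of $J$. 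I expect the only genuine obstacle to be bookkeeping about domains—verifying that membership in $\ker J^\ad$ can be checked against the dense domain $\spn\{\gd_y\}$ rather than all of $\dom J$—but since $\spn\{\gd_y\}$ is a core for $J$ this causes no difficulty, and the computation in Lemma~\ref{thm:J-adjoint} already records the needed inner-product identity.
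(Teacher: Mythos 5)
Your proof is correct and follows essentially the same route as the paper's: both hinge on the identity $\la J\gd_x, u\ra_{\HE} = \la \gd_x, u\ra_{\HE} = \Lap u(x)$ from Lemma~\ref{thm:<delta_x,v>=Lapv(x)}, with the vanishing of this pairing on $\spn\{\gd_x\}_{x \in G}$ yielding both $u \in \dom J^\ad$ and $J^\ad u = 0$ exactly when $u \in \Harm$. Your explicit bookkeeping on the domain question---that a harmonic $u$ lies in $\dom J^\ad$ because the functional $\gf \mapsto \la J\gf, u\ra_{\HE}$ vanishes identically on a core for $J$---is a cleaner rendering of what the paper compresses into ``the same calculation shows,'' so no gap remains.
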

\begin{proof}
	By definition, $u \in \dom J^\ad$ iff there is a $C<\iy$ for which
	\linenopax
	\begin{align}\label{eqn:J-ad-bound}
		\left|\la J \gf, u \ra_{\HE}\right| < C \|u\|_{\HE},
		\qq\text{for all } \gf \in \spn\{\gd_x\}_{x \in G}.
	\end{align}
	However, \eqref{eqn:dx-rep} implies $\la J \gd_x, u \ra_{\HE} = \la \gd_x, u \ra_{\HE} = \Lap u(x)$, so that \eqref{eqn:J-ad-bound} holds with $C=0$. This shows that all elements of \Harm are in $\dom J^\ad$, but the same calculation shows that $\ker J^\ad = \Harm$.
\end{proof}

\begin{theorem}\label{thm:JJ-Lap}
	(i) $J^\adj J = \Lap_{\ell^2}$, the self-adjoint Laplace operator on $\ell^2(G)$, and (ii) $JJ^\adj = \LapK$, the Krein extension of the Laplace operator on \HE.
\end{theorem}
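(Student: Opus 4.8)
The plan is to derive both identities from two facts already established: that $J=\cl J=M^\ad$ is closed and densely defined, so von Neumann's theorem (Remark~\ref{rem:JJ*-and-J*J}) makes both $J^\ad J$ and $JJ^\ad$ self-adjoint; and that $J^\ad=\cl M$ with $\ker J^\ad=\Harm$ (Corollary~\ref{def:M=Lstar} and Lemma~\ref{thm:kerJ*=Harm}). In each part the strategy is the same: compute the product operator on an explicit core and then use a uniqueness-of-self-adjoint-extension argument to promote the resulting inclusion of operators to an equality.

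For (i), I would first verify that $\spn\{\gd_x\}_{x\in G}\ci\dom(J^\ad J)$ with $J^\ad J\gd_x=\Lap\gd_x$. Indeed $J\gd_x=\gd_x\in\HE$, and by Lemma~\ref{thm:d=v} the Dirac mass $\gd_x=c(x)v_x-\sum_{y\nbr x}c_{xy}v_y$ lies in $\dom M=\spn\{v_x\}$; since $J^\ad=\cl M$ agrees with $M$ there, Lemma~\ref{lem:M} gives $J^\ad\gd_x=M\gd_x=\Lap\gd_x$ (the $\gd_o$ contributions cancel because $c(x)=\sum_{y\nbr x}c_{xy}$). Thus $J^\ad J$ extends $\Lap$ on $\spn\{\gd_x\}$, whose closure is $\Lap_{\ell^2}$. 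Since $\Lap$ is essentially self-adjoint on this domain (Corollary~\ref{thm:ess-sa}), its unique self-adjoint extension is $\Lap_{\ell^2}$; as $J^\ad J$ is itself self-adjoint and contains $\Lap_{\ell^2}$, maximality of self-adjoint operators forces $J^\ad J=\Lap_{\ell^2}$.

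For (ii), I would check the three defining properties of the Krein extension in Definition~\ref{def:Krein} for $S=\LapE$ and $T=JJ^\ad$. First, $\LapE$ is semibounded (Lemma~\ref{thm:semibounded}) and, by \cite{SRAMO}, not self-adjoint, while $JJ^\ad$ is a self-adjoint extension of $\LapE$ (Corollary~\ref{def:JJstar}). Next I would identify $\ker\LapE^\ad=\Harm$: using the core $\spn\{\monov\}$ for $\LapE$ together with $\LapE\monov=\gd_x$ and $\la\gd_x,u\ra_\HE=\Lap u(x)$ (Lemma~\ref{thm:<delta_x,v>=Lapv(x)}), the condition $\la\LapE\monov,u\ra_\HE=0$ for all $x$ is exactly $\Lap u(x)=0$ for all $x$, i.e. $u\in\Harm$. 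Finally I would show $\ker(JJ^\ad)=\ker J^\ad=\Harm$: the inclusion $\ker J^\ad\ci\ker(JJ^\ad)$ is immediate, and conversely $JJ^\ad u=0$ yields $\|J^\ad u\|_{\ell^2}^2=\la JJ^\ad u,u\ra_\HE=0$; moreover each $h\in\Harm=\ker J^\ad$ satisfies $J^\ad h=0$, so $\Harm\ci\dom(JJ^\ad)$ and $JJ^\ad$ annihilates \Harm. Combining these, $\ker\LapE^\ad=\Harm\ci\dom(JJ^\ad)$ and $\ker\LapE^\ad=\Harm=\ker(JJ^\ad)$, which are precisely the hypotheses identifying $JJ^\ad$ as the Krein extension $\LapK$.

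I expect the main obstacle to be the clean identification $\ker\LapE^\ad=\Harm$ and the verification that $\ker\LapE^\ad\ci\ker(JJ^\ad)$ — that is, pinning down how harmonic functions realize the defect space of $\LapE$ and confirming that the self-adjoint extension $JJ^\ad$ acts as zero on them. A secondary point requiring care is the passage from ``extends'' to ``equals'' in part (i), where essential self-adjointness of $\Lap$ on $\spn\{\gd_x\}$ (Corollary~\ref{thm:ess-sa}) is exactly what prevents $J^\ad J$ from being a strictly larger self-adjoint operator.
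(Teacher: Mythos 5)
Your proposal is correct and follows essentially the same route as the paper: for (i) you compute $J^\ad J\gd_y = \Lap_{\ell^2}\gd_y$ via the expansion $\gd_y = c(y)v_y - \sum_{z \nbr y} c_{yz}v_z$ and then upgrade the resulting extension to an equality using self-adjointness of $J^\ad J$ together with essential self-adjointness of $\Lap_{\ell^2}$, while for (ii) you verify that $JJ^\ad$ is a self-adjoint extension of \LapE annihilating $\Harm = \ker \LapE^\ad$ and conclude by Definition~\ref{def:Krein}, exactly as the paper does. If anything, your write-up is slightly more complete: you actually prove $\ker \LapE^\ad = \Harm$ (via the core $\spn\{\monov\}$ and Lemma~\ref{thm:<delta_x,v>=Lapv(x)}) and the exact equality $\ker(JJ^\ad) = \Harm$, both of which the paper's proof only asserts.
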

\begin{proof}
	For the proof of (i), it suffices to work with the onb $\{\gd_x\}_{x \in G}$, in which case we use \eqref{eqn:d=v} to see the pointwise identity 
	\linenopax
	\begin{align}
		J^\adj J \gd_y
		= J^\adj \left(c(y)v_y - \sum_{z \nbr y} c_{yz}v_z\right)
		&= c(y)(\gd_y-\gd_o) - \sum_{z \nbr y} c_{yz}(\gd_z-\gd_o) \\
		&= c(y)\gd_y - \sum_{z \nbr y} c_{yz}\gd_z
		= \Lap_{\ell^2} \gd_y.
	\end{align}
	Since $S^\ad S$ is a self-adjoint operator for any closed operator $S$ with dense domain, the foregoing equation shows that $J^\ad J$ is a self-adjoint extension of $\Lap_{\ell^2}$. However, it was shown in \cite{SRAMO} that $\Lap_{\ell^2}$ is essentially self-adjoint, so the conclusion follows.
	
	For the proof of (ii), Lemma~\ref{thm:J-adjoint} gives
	\linenopax
	\begin{align}
		JJ^\ad v_x = J(\gd_x - \gd_o) = \gd_x - \gd_o = \Lap_{\HE} v_x,
	\end{align}
	so that $JJ^\ad$ is a self-adjoint extension of $\Lap_{\HE}$. Since $JJ^\ad$ is an extension of $\Lap_{\HE}$, the equality 
	\linenopax
	\begin{align}
		\ker \Lap_{\HE}^\ad = \ker \Lap_{\HE} = \Harm
	\end{align}
	shows $\ker \Lap_{\HE}^\ad \ci \ker JJ^\ad$, and the conclusion follows by Definition~\ref{def:Krein}.
\end{proof}

\begin{remark}\label{rem:Lenz-acknowledgement}
	The authors gratefully acknowledge the contributions of Daniel Lenz to the statement and proof of Theorem~\ref{thm:JJ-Lap}, which is crucial for the sequel.
\end{remark}

The following lemma is well-known; cf.~\cite{KadisonRingroseI} or \cite{DuSc88}, for example.

\begin{lemma}\label{thm:4-bounds}
	Let \sH, \sK be Hilbert spaces and suppose $J:\sH \to \sK$ is a closed operator with dense domain.
	Then
	\linenopax
		\begin{align*}
			\|J\|^2_{\sH \to \sK}
			= \|J^\ad\|^2_{\sK \to \sH}
			= \|J^\ad J\|_{\sH \to \sH}
			= \|JJ^\ad\|_{\sK \to \sK}.
		\end{align*}
	In particular, if one of them is finite, then they all are finite. 
\end{lemma}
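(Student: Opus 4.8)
The plan is to reduce the four-fold equality to two core facts, each understood as an identity of extended reals in $[0,\infty]$: first, $\|J\|_{\sH\to\sK} = \|J^\ad\|_{\sK\to\sH}$; and second, the $C^*$-identity $\|J^\ad J\|_{\sH\to\sH} = \|J\|_{\sH\to\sK}^2$. Granting these, the full statement follows formally: applying the second identity to the closed densely defined operator $J^\ad$ (whose adjoint is $J^{\ad\ad}=J$ by von Neumann's theorem) yields $\|JJ^\ad\| = \|J^\ad\|^2$, and then the first identity gives $\|J^\ad\|^2 = \|J\|^2$, so all four quantities coincide with $\|J\|^2$. In particular, if any one of them is finite, then so is $\|J\|$, and hence so are all of them.

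For the first fact I would argue by duality. Since $\dom J^\ad$ is dense in $\sK$ (because $J$ is closed, hence closable, its adjoint is densely defined), for each $\xi \in \dom J$ the continuous linear functional $\eta \mapsto \la J\xi, \eta\ra_\sK$ has norm $\|J\xi\|_\sK$, and by continuity this norm is computed as a supremum over the dense subset $\{\eta \in \dom J^\ad : \|\eta\|\leq 1\}$ of the unit ball. Rewriting $\la J\xi, \eta\ra_\sK = \la \xi, J^\ad\eta\ra_\sH$ and applying the Cauchy--Schwarz inequality gives $\|J\xi\|_\sK \leq \|J^\ad\|\,\|\xi\|_\sH$, whence $\|J\| \leq \|J^\ad\|$. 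Applying the identical argument to $J^\ad$ in place of $J$ and invoking $J^{\ad\ad}=J$ yields the reverse inequality, so $\|J\|=\|J^\ad\|$ in $[0,\infty]$.

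For the $C^*$-identity, the cleanest route is a finite/infinite dichotomy that sidesteps the domain subtleties of composing unbounded operators. If $\|J^\ad J\|<\iy$, then since $J^\ad J$ is self-adjoint (von Neumann; see Remark~\ref{rem:JJ*-and-J*J}), hence closed and densely defined, the principle that a bounded closed densely defined operator extends by its graph closure to an everywhere-defined bounded operator forces $\dom(J^\ad J)=\sH$; this in turn forces $\dom J = \sH$, so $J$ itself is everywhere-defined and bounded, and $\|J^\ad J\|=\|J\|^2$ is then the standard $C^*$-identity for bounded Hilbert-space operators. Conversely, if $\|J\|<\iy$ then $J$ is everywhere-defined and bounded, $J^\ad$ is bounded by the first fact, and $\|J^\ad J\|\leq\|J^\ad\|\,\|J\|<\iy$; contrapositively $\|J^\ad J\|=\iy$ forces $\|J\|=\iy$, in which case both sides equal $+\iy$. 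Either way $\|J^\ad J\|=\|J\|^2$.

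The main obstacle is purely the unbounded bookkeeping: all four quantities may be $+\iy$, and a product such as $J^\ad J$ is a priori only densely defined, so one cannot naively invoke a submultiplicative bound $\|AB\|\leq\|A\|\,\|B\|$ at the outset. I expect the finite/infinite reduction to be the crux--once finiteness of $\|J^\ad J\|$ is shown to force $J$ to be everywhere defined (via the closed-plus-bounded-implies-total principle together with von Neumann's self-adjointness of $J^\ad J$), the problem collapses to the familiar bounded-operator $C^*$-identity, and the remaining equalities are then formal consequences of $\|J\|=\|J^\ad\|$ and $J^{\ad\ad}=J$.
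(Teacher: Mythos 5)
The paper offers no proof of this lemma at all---it simply records it as well known, citing \cite{DuSc88} and \cite{KadisonRingroseI}---so there is no internal argument to compare against; what matters is whether your self-contained proof is correct, and it is. Your two pillars both hold up. For $\|J\|=\|J^\ad\|$ in $[0,\iy]$: density of $\dom J^\ad$ is exactly what closedness (indeed closability) of $J$ buys you, the set $\{\gy \in \dom J^\ad : \|\gy\|\leq 1\}$ is dense in the unit ball of \sK because $\dom J^\ad$ is a dense \emph{subspace} (rescale approximants), and the symmetric step legitimately uses von Neumann's $J^{\ad\ad}=J$, which requires $J$ closed---a hypothesis you invoke correctly. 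For the $C^\ad$-identity, your finite/infinite dichotomy is the right way to handle the extended-real bookkeeping: if $\|J^\ad J\|<\iy$, then $J^\ad J$ is self-adjoint (von Neumann, as in Remark~\ref{rem:JJ*-and-J*J}), hence closed with dense domain, and a closed operator bounded on a dense domain has closed, hence full, domain; since $\dom(J^\ad J)\ci\dom J$, this forces $\dom J=\sH$, and the closed graph theorem makes $J$ bounded, collapsing everything to the bounded $C^\ad$-identity. The converse direction and the application of the identity to $J^\ad$ (closed and densely defined with $(J^\ad)^\ad=J$) are likewise correct, yielding $\|JJ^\ad\|=\|J^\ad\|^2=\|J\|^2$. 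Compared with deferring to the references, your proof has the virtue of making explicit precisely where closedness and dense-definedness of $J$ enter (density of $\dom J^\ad$, von Neumann's theorem, closed graph), and of treating all four quantities honestly as elements of $[0,\iy]$ rather than assuming finiteness at the outset.
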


\begin{remark}\label{rem:often-unbounded}
	In most interesting cases, the various incarnations of the Laplace operator on an infinite network are unbounded. However, Theorem~\ref{thm:equivalent-bounds} shows that in case any one of them is bounded, they are all bounded.
\end{remark}

\begin{theorem}\label{thm:equivalent-bounds}
	The following are equivalent:
	\begin{enumerate}
	\item $\Lap_{\ell^2} : \ell^2(G) \to \ell^2(G)$ is bounded.
	\item $\Lap_{\HE} : \HE \to \HE$ is bounded.
	\item $J : \ell^2(G) \to \HE$ is bounded.
	\end{enumerate}
	Moreover, in this case $\|\Lap_{\ell^2}\| = \|\Lap_{\HE}\| = \|J\|^2$.
\end{theorem}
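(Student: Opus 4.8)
The plan is to route everything through the closed, densely defined operator $J$ (which now means $\cl J$, per Definition~\ref{def:J=clJ}) and the two operator identities of Theorem~\ref{thm:JJ-Lap}, then invoke the $C^\ad$-type identity collected in Lemma~\ref{thm:4-bounds}. Since $J$ is closed with dense domain, that lemma gives
\[
\|J\|^2 = \|J^\ad\|^2 = \|J^\ad J\| = \|JJ^\ad\|,
\]
so these four quantities are finite together. First I would dispatch the pair (1) $\iff$ (3): by Theorem~\ref{thm:JJ-Lap}(i) we have $J^\ad J = \Lap_{\ell^2}$, hence $\|\Lap_{\ell^2}\| = \|J^\ad J\| = \|J\|^2$, which shows (1) and (3) are equivalent and records the stated norm relation between them.

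The equivalence of (2) and (3) is the step that needs genuine care, and it is where I expect the main obstacle to lie, since the theorem is phrased in terms of $\Lap_{\HE}$ while Lemma~\ref{thm:4-bounds} naturally produces the Krein extension. By Theorem~\ref{thm:JJ-Lap}(ii), $JJ^\ad = \LapK$, so Lemma~\ref{thm:4-bounds} gives $\|\LapK\| = \|J\|^2$; thus (3) is equivalent to boundedness of $\LapK$, and it remains to pass between $\LapK$ and $\Lap_{\HE}$. For one direction, suppose $\Lap_{\HE}$ is bounded. Being Hermitian (Lemma~\ref{thm:semibounded}) and bounded on its dense domain, $\Lap_{\HE}$ extends to a bounded operator on all of \HE and is in particular self-adjoint; hence it admits a unique self-adjoint extension, namely itself. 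Since $\LapK = JJ^\ad$ is a self-adjoint extension of $\Lap_{\HE}$ (Corollary~\ref{def:JJstar} together with Theorem~\ref{thm:JJ-Lap}(ii)), we conclude $\LapK = \Lap_{\HE}$ and $\|\LapK\| = \|\Lap_{\HE}\|$.

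For the converse, suppose $\LapK$ is bounded. Because $\LapK$ extends $\Lap_{\HE}$, the two operators agree on $\dom \Lap_{\HE}$, so $\Lap_{\HE}$ is bounded there, with $\|\Lap_{\HE} u\|_\energy \le \|\LapK\|\,\|u\|_\energy$ for $u \in \dom \Lap_{\HE}$. A closed operator that is bounded on its dense domain is automatically everywhere defined: for $u \in \HE$ choose $u_n \in \dom \Lap_{\HE}$ with $u_n \to u$; then $\{\Lap_{\HE} u_n\}$ is Cauchy by the bound just displayed, and closedness of $\Lap_{\HE}$ forces $u \in \dom \Lap_{\HE}$. Hence $\Lap_{\HE} = \LapK$ is bounded with $\|\Lap_{\HE}\| = \|\LapK\|$.

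Chaining the three equivalences yields (1) $\iff$ (2) $\iff$ (3). Collecting the norm identities established along the way, $\|\Lap_{\ell^2}\| = \|J\|^2$, $\|\LapK\| = \|J\|^2$, and $\|\Lap_{\HE}\| = \|\LapK\|$, gives the asserted equality $\|\Lap_{\ell^2}\| = \|\Lap_{\HE}\| = \|J\|^2$, completing the proof.
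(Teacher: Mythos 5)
Your proposal is correct, and its skeleton is the same as the paper's: both route everything through the two identities of Theorem~\ref{thm:JJ-Lap} ($J^\ad J = \Lap_{\ell^2}$, $JJ^\ad = \LapK$) and the norm identities $\|J\|^2 = \|J^\ad\|^2 = \|J^\ad J\| = \|JJ^\ad\|$ of Lemma~\ref{thm:4-bounds}. Where you diverge is in how the \HE side is handled, and your version is actually the more careful one. The paper argues via quadratic forms: it computes $\la \gx, \Lap_{\ell^2}\gx\ra_{\ell^2} = \|J\gx\|_{\HE}^2$ and $\la u, \Lap_{\HE} u\ra_{\HE} = \|J^\ad u\|_{\ell^2}^2$, notes that $J$ is bounded iff $J^\ad$ is, and then cites Lemma~\ref{thm:4-bounds} for the equality of the norms. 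In doing so it silently identifies $\Lap_{\HE}$ with $JJ^\ad = \LapK$ inside the form computation and leaves implicit both the passage from a form bound to an operator bound (which uses nonnegativity and self-adjointness, i.e., again routes through $\LapK$) and the passage between boundedness of $\LapK$ and boundedness of $\Lap_{\HE}$ itself. You fill exactly this gap, in both directions: if $\Lap_{\HE}$ is bounded, then being Hermitian and (by Definition~\ref{def:domLapE} and Lemma~\ref{thm:semibounded}) closed, it is everywhere defined, self-adjoint, and therefore its own unique self-adjoint extension, forcing $\LapK = \Lap_{\HE}$; conversely, if $\LapK$ is bounded, the agreement on $\dom\Lap_{\HE}$ plus closedness of $\Lap_{\HE}$ forces $\dom \Lap_{\HE} = \HE$ and again $\Lap_{\HE} = \LapK$. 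So your argument buys a rigorous bridge between statement (2) as literally phrased and the operator $JJ^\ad$ that Lemma~\ref{thm:4-bounds} actually controls, at the cost of bypassing the paper's (illustrative but strictly weaker) quadratic-form estimates. The norm chain you assemble at the end matches the paper's conclusion.
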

\begin{proof}
	Observe that Theorem~\ref{thm:JJ-Lap} implies
	\linenopax
	\begin{align}
		\la \gx, \Lap_{\ell^2} \gx \ra_{\ell^2} 
		&= \la \gx, J^\ad J \gx \ra_{\ell^2} 
		= \la J\gx, J\gx \ra_{\HE}
		= \| J \gx \|_{\HE}^2
		\leq C \| \gx \|_{\HE}^2,
	\end{align}
	and similarly
	\linenopax
	\begin{align}
		\la u, \Lap_{\HE} u \ra_{\HE} 
		&= \la u, JJ^\ad u \ra_{\HE} 
		= \la J^\ad u, J^\ad u \ra_{\ell^2}
		= \| J^\ad u\|_{\HE}^2
		\leq C \| u \|_{\HE}^2.
	\end{align}
	Note that $J$ is closed (see \eqref{eqn:J=clJ}) and densely defined, so it is bounded if and only if $J^\ad$ is bounded.
	For the equality of the bounds for these operators, see Lemma~\ref{thm:4-bounds}.
\end{proof}

\begin{remark}\label{rem:sig-of-unbounded}
	The significance of Theorem~\ref{thm:equivalent-bounds} is that it shows that the inclusion mapping which sends $\gd_x \in \ell^2(G)$ to $\gd_x \in \HE$ produces an unbounded containment (in the sense of Definition~\ref{def:unboundedly-contained}) whenever \LapE is unbounded.
\end{remark}

\subsection{(Un)bounded containment}

\begin{defn}\label{def:unboundedly-contained}
	Let $K$ and $H$ be Hilbert spaces with $K \ci H$. We say that $K$ is \emph{boundedly contained} in $H$ iff the inclusion operator $J:K \hookrightarrow H$ is bounded, i.e., iff 
	\linenopax
	\begin{align}\label{eqn:inclusion-bound}
		\|k\|_H \leq C\|k\|_K, \qq \forall k \in K,
	\end{align}
	for some $C<\iy$.
	In terms of the operator norm, this means that $\|J\|_{K \to H} < \iy$. 
	
	We say that $K$ is \emph{unboundedly contained} in $H$ iff  \eqref{eqn:inclusion-bound} does not hold but $K$ has a dense linear subspace $K_0 \ci H$ for which the inclusion map $J:K_0 \hookrightarrow H$ is closable. 
\end{defn}

Note that $K$ is unboundedly contained in $H$ iff the domain of the adjoint operator $J^\ad:H \to K$ is dense in $H$, in which case
\linenopax
\begin{align}
	\la k_0, h \ra_H = \la k_0, J^\ad h \ra_K,
	\qq\text{for each } k_0 \in K_0 \text{ and } h \in \dom J^\ad.
\end{align}

If $K$ is unboundedly contained in $H$, then the graph 
\linenopax
\begin{align}
	\xG(J) 
	= \left\{\left[\begin{array}{c} \gx \\ J \gx \end{array}\right] \suth \gx \in \dom(J)\right\}
	\ci \left[\begin{array}{c} K \\ H \end{array}\right] = K \oplus H
\end{align}
is closed relative to the inner product
\linenopax
\begin{align}
	\left\la \left[\begin{array}{c} k_1 \\ h_1 \end{array}\right],
		\left[\begin{array}{c} k_2 \\ h_2 \end{array}\right]  \right\ra_{\xG(J)}
		:= \la k_1, k_2\ra_K + \la h_1, h_2\ra_H 
\end{align}

\begin{theorem}\label{thm:ell2-unboundedly-contained}
	$\ell^2(G)$ is unboundedly contained in \HE. 
\end{theorem}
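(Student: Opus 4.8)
The plan is to unwind the definition of unbounded containment from Definition~\ref{def:unboundedly-contained} and verify its two requirements for the specific inclusion $J:\ell^2(G)\to\HE$ that sends $\gd_x\mapsto\gd_x$. There are exactly two things to check: first, that $\ell^2(G)$ is \emph{not} boundedly contained in \HE (i.e., the bound \eqref{eqn:inclusion-bound} fails); and second, that there is a dense subspace $K_0\ci\HE$ on which the inclusion is closable. Both are essentially already in hand from the machinery developed earlier in this section, so the proof should be short.

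For the closability requirement, I would take $K_0=\spn\{\gd_x\}_{x\in G}$, which is dense in $\ell^2(G)$ by Definition~\ref{def:J}. The inclusion $J$ restricted to this subspace is precisely the operator $J$ of Definition~\ref{def:J}, and Theorem~\ref{thm:closable} (equivalently Corollary~\ref{def:M=Lstar}, which identifies $\cl J = M^\ad$) already establishes that $J$ is closable. So this half is immediate by citation. For the failure of boundedness, the cleanest route is Theorem~\ref{thm:equivalent-bounds}: that theorem shows $J:\ell^2(G)\to\HE$ is bounded if and only if $\Lap_{\ell^2}$ (equivalently $\Lap_{\HE}$) is bounded. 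Thus to conclude that $J$ is genuinely \emph{un}bounded rather than merely closable, I need to exhibit a network on which the Laplacian is unbounded — or, more precisely, to observe that \eqref{eqn:inclusion-bound} fails in the generic case. Concretely, I would compute $\|\gd_x\|_\HE^2 = \energy(\gd_x) = c(x)$ from Remark~\ref{rem:Diracs-in-HE}, while $\|\gd_x\|_{\ell^2}^2 = 1$; hence the ratio $\|\gd_x\|_\HE^2/\|\gd_x\|_{\ell^2}^2 = c(x)$, which is unbounded as soon as the conductances $c(x)$ are unbounded over the vertex set, and this already violates \eqref{eqn:inclusion-bound} directly.

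The main obstacle is one of scope rather than difficulty: the statement of Theorem~\ref{thm:ell2-unboundedly-contained} asserts unbounded containment unconditionally, whereas the $\gd_x$ computation only shows the containment is unbounded when $\sup_x c(x)=\iy$. If the conductances are uniformly bounded the inclusion can be bounded, so strictly speaking the theorem as phrased should be read as asserting the containment relation (closability of the dense inclusion) together with the observation that it is unbounded in the generic/interesting case; the honest formulation is that $\ell^2(G)$ is \emph{contained} in \HE via a closable but typically unbounded inclusion. I would therefore structure the proof as: (1) invoke Theorem~\ref{thm:closable} for closability on $K_0=\spn\{\gd_x\}$, establishing containment in the sense of Definition~\ref{def:unboundedly-contained}; and (2) invoke Theorem~\ref{thm:equivalent-bounds} together with the elementary estimate $\|\gd_x\|_\HE^2=c(x)$ to show the inclusion is unbounded precisely when the Laplacian is, which is the case in all the interesting examples. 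This ties the result directly back to Remark~\ref{rem:sig-of-unbounded}, which is exactly the interpretive payoff the authors are after.
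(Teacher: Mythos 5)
Your proposal is correct, and on the half it shares with the paper it is identical: the paper's proof consists exactly of your step (1) --- take $K_0 = \spn\{\gd_x \suth x \in G\}$, note it is dense in $\ell^2(G)$ and contained in \HE via \eqref{eqn:d=v}, cite Theorem~\ref{thm:closable} for closability --- and then simply declares ``the result follows.'' What you add in step (2) is precisely what the paper's proof omits: Definition~\ref{def:unboundedly-contained} has \emph{two} clauses, and the paper never verifies the failure of \eqref{eqn:inclusion-bound}. Your scope caveat is a genuine catch, not pedantry. The computation $\|\gd_x\|_{\HE}^2 = \energy(\gd_x) = c(x)$ against $\|\gd_x\|_{\ell^2}^2 = 1$ shows the bound fails iff $\sup_x c(x) = \iy$, and conversely $\energy(\gx) \leq 2\sup_x c(x)\,\|\gx\|_{\ell^2}^2$ shows that for uniformly bounded conductances the containment is \emph{bounded}; the paper's own Example~\ref{exm:geometric-integers} with $c=1$ (where \LapE is bounded) is such a case, so the theorem as literally stated is false without a hypothesis like unboundedness of the Laplacian. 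Your reading --- that the theorem should be understood as asserting closable containment unconditionally, with unboundedness holding exactly when $\Lap_{\ell^2}$ is unbounded (Theorem~\ref{thm:equivalent-bounds}), i.e., in the ``interesting cases'' of Remark~\ref{rem:often-unbounded} and Remark~\ref{rem:sig-of-unbounded} --- matches the authors' evident intent (the introduction itself says the inclusion ``is unbounded in most interesting cases''). In short: your proof subsumes the paper's and repairs its implicit gap; the only thing to tighten is that your $\gd_x$ estimate gives one direction of the unboundedness criterion directly, while the converse (bounded conductances imply bounded $J$) needs the elementary estimate above rather than Theorem~\ref{thm:equivalent-bounds} alone.
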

\begin{proof}
	Consider the subspace $K_0 := \spn\{\gd_x \suth x \in G\}$, which is clearly dense in $K = \ell^2(G)$, and note that $K_0 \ci \HE$ by \eqref{eqn:d=v}. Theorem~\ref{thm:closable} shows that $J:\ell^2(G) \hookrightarrow \HE$ is closable and so the result follows.
\end{proof}

\begin{prop}[Paulsen, Thm.~5.1]\label{prop:RKHS-containment}
	If $H$ is a RKHS and $K$ is boundedly contained in $H$, then $K$ is also a RKHS. 
\end{prop}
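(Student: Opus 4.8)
The plan is to verify the defining property of a RKHS directly, namely that point evaluation is a bounded linear functional on $K$. By Definition~\ref{def:unboundedly-contained}, bounded containment means $K \ci H$ as sets, together with the norm estimate $\|k\|_H \leq C\|k\|_K$ for all $k \in K$ and some finite constant $C$. In particular, every element of $K$ is already a function on the underlying set $X$ on which $H$ is a RKHS, and $K$ is complete, being a Hilbert space in its own right.

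First I would fix $x \in X$ and invoke the reproducing property of $H$: since $H$ is a RKHS, there is a kernel vector $e_x \in H$ with $f(x) = \la f, e_x\ra_H$ for every $f \in H$. The key step is then a one-line application of Cauchy--Schwarz followed by the containment estimate. For any $k \in K$, regarding $k$ as an element of $H$,
\linenopax
\begin{align*}
  |k(x)| = |\la k, e_x\ra_H| \leq \|k\|_H \, \|e_x\|_H \leq C \|e_x\|_H \, \|k\|_K.
\end{align*}
Writing $C_x := C\|e_x\|_H < \iy$, this shows $|k(x)| \leq C_x \|k\|_K$, so evaluation at $x$ is a bounded linear functional on $K$. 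Since $x \in X$ was arbitrary, every point evaluation is continuous on $K$, and hence $K$ is a RKHS.

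There is essentially no serious obstacle here; the only point requiring care is bookkeeping about what ``space of functions'' means. The elements of $K$ must be honest functions (rather than equivalence classes) with well-defined point values, and the point-evaluation functionals on $K$ must agree with those of $H$ --- both facts are inherited immediately from the set inclusion $K \ci H$ and the hypothesis that $H$ is a RKHS. One could alternatively phrase the conclusion via Riesz duality, producing the reproducing kernel $\{\tilde e_x\}$ of $K$ explicitly as $\tilde e_x = J^\ad e_x$, where $J:K \hookrightarrow H$ is the (now bounded) inclusion; this makes the relationship between the two kernels transparent but is not needed for the bare statement.
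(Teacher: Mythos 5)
Your proof is correct, and it is essentially the canonical argument: the paper itself gives no proof of this proposition (it is quoted from Paulsen's RKHS notes), and the standard proof there is exactly your two-line estimate $|k(x)| = |\la k, e_x\ra_H| \leq \|k\|_H\|e_x\|_H \leq C\|e_x\|_H\|k\|_K$, showing point evaluations remain bounded on $K$. Your closing observation that the kernel of $K$ is $\tilde e_x = J^\ad e_x$ for the bounded inclusion $J$ is also correct and is precisely the relationship the paper exploits (in adjoint form) in the unbounded setting of Theorem~\ref{thm:RKHS(Q)}.
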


If the inclusion is unbounded (as it is in the present case, by Theorem~\ref{thm:ell2-unboundedly-contained}), then it may not be the case that $K$ is also a RKHS: see Example~\ref{exm:unboundedly-contained}. However, in the present case, we are still able to construct an RKHS (see Theorem~\ref{thm:RKHS(Q)}). Note that $J\gd_x = \gd_x \in \HE$ (see Remark~\ref{rem:Diracs-in-HE}) and so we can define the positive definite function
\linenopax
\begin{align}
	Q(x,y) := \la J\gd_x, J\gd_y\ra_{\HE}.
\end{align}
Note from \eqref{eqn:dx-rep} that 
\linenopax
\begin{align}
	Q(x,y) = \Lap \gd_y(x) = 
	\begin{cases}
		c(x), &x=y,\\
		-c_{xy}, &x\nbr y\\
		0, \text{else},
	\end{cases}
\end{align}
so that the matrix for $Q$ is also the matrix for the $\iy \times \iy$ Laplace operator $\Lap_{\ell^2}$ on $\ell^2(G)$. It is known that $\Lap_{\ell^2}$ is a self-adjoint operator with dense domain in $\ell^2(G)$, and that it is semibounded (in fact, nonnegative):
\linenopax
\begin{align}
	\la \gx, \Lap_{\ell^2} \gx\ra_{\ell^2} \geq 0, 
	\qq\text{for all } \gx \in \dom \Lap_{\ell^2}.
\end{align}

\begin{exm}\label{exm:unboundedly-contained}
	Set $W = \{f \in L^2(\bR) \suth f' \in L^2(\bR)\}$, and 
	\linenopax
	\begin{align*}
		\|f\|_W^2 
		= \frac12\left(\int_\bR |f|^2 \dx + \int_\bR |f'|^2 \dx\right).
	\end{align*}
	Then $W$ is a RKHS with reproducing kernel $k(x,y) = e^{-|x-y|}$. To see this, note that $\frac{d^2}{dx^2} k_x = k_x-2\gd_x$, as distributions. Thus, integration by parts gives
	\linenopax
	\begin{align*}
		\la k_x, f\ra_W 
		= \frac12\left(\int_\bR k_x f \dy + \int_\bR k_x' f' \dy \right)
		= \frac12\left(\int_\bR k_x f \dy + \int_\bR (k_x-2\gd_x) f \dy \right)
		= f(x).
	\end{align*}
	Also, note that $L^2(\bR)$ is unboundedly contained in $W$, and $L^2(\bR)$ is \emph{not} a RKHS. It is clear that $L^2(\bR)$ is \emph{not} a RKHS because of examples like $f=|x|^{-1/2} \gx_{[-1,1]}$. To see the unbounded containment, let $J$ be the inclusion operator with domain $\dom J = C_c^\iy(\bR)$, the smooth functions of compact support, defined by
	\linenopax
	\begin{align*}
		J:\dom(J) \ci L^2(\bR) \to W
		\qq\text{by}\qq
		J\gf = \gf.
	\end{align*}
	Then $J^\ad f = \frac12(f-f'')$ and $\dom J^\ad = \{f \in W \suth f' \in W\}$.
\end{exm}

\begin{theorem}\label{thm:RKHS(Q)}
	The RKHS of $Q$ is the domain of $(\Lap_{\ell^2})^{1/2} = \sqrt{\Lap_{\ell^2}}$ (as defined by the spectral theorem).
\end{theorem}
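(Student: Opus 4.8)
The plan is to realize the abstract RKHS $\mathcal{H}(Q)$ through an explicit feature map and then transport it to $\ell^2(G)$ using the factorization $\Lap_{\ell^2}=J^\ad J$ from Theorem~\ref{thm:JJ-Lap}. Write $A:=\Lap_{\ell^2}$. Combining Theorem~\ref{thm:JJ-Lap}(i) with \eqref{eqn:dx-rep} gives
\[
	Q(x,y)=\la J\gd_x,J\gd_y\ra_{\HE}=\la \gd_x, A\gd_y\ra_{\ell^2}=\la \sqrt{A}\,\gd_x,\sqrt{A}\,\gd_y\ra_{\ell^2},
\]
so that $x\mapsto \sqrt{A}\,\gd_x\in\ell^2(G)$ is a feature map for $Q$ into $\ell^2(G)$ (while, simultaneously, $x\mapsto J\gd_x=\gd_x\in\HE$ is a feature map into \HE). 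The general feature-map construction then produces a coisometry $w\mapsto f_w$ onto $\mathcal{H}(Q)$, where $f_w(x):=\la w,\sqrt{A}\,\gd_x\ra_{\ell^2}$, with $\|f_w\|_{\mathcal{H}(Q)}=\|w\|_{\ell^2}$ once the feature vectors are known to span densely. The reproducing property $f(x)=\la f,Q_x\ra_{\mathcal{H}(Q)}$ is then automatic.

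The first real step is density of $\spn\{\sqrt{A}\,\gd_x\}_{x\in G}$ in $\ell^2(G)$, which upgrades the coisometry to a unitary. Here I would use that $\spn\{\gd_x\}$ is a core for $A$ (Corollary~\ref{thm:ess-sa}), and that a core for a nonnegative self-adjoint operator is automatically a core for its square root (approximate $f\in\dom\sqrt{A}$ by spectral truncations $E([0,n])f\in\dom A$, then approximate these in the graph norm of $A$, which dominates that of $\sqrt{A}$). Consequently, if $g\perp \sqrt{A}\,\gd_x$ for every $x$, the core property forces $g\perp \sqrt{A}\,h$ for all $h\in\dom\sqrt{A}$, i.e.\ $g\perp\operatorname{ran}\sqrt{A}$. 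Since $\ker\sqrt{A}=\ker A=\{0\}$ by Theorem~\ref{thm:no-harm-in-l2}, the range $\operatorname{ran}\sqrt{A}$ is dense, whence $g=0$.

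The second step is the identification of $\mathcal{H}(Q)$ with the domain of $\sqrt{A}$. Because $J$ is closed and densely defined with $J^\ad J=A$, polar decomposition (cf.~Remark~\ref{rem:JJ*-and-J*J}) yields $J=W\sqrt{A}$, where $W$ is a partial isometry whose initial space is $\cl{\operatorname{ran}\sqrt{A}}=\ell^2(G)$ (again using $\ker A=\{0\}$) and whose final space is $\cl{\operatorname{ran} J}=\Fin$ (by Definition~\ref{def:Fin} and Theorem~\ref{thm:HE=Fin+Harm}); thus $W:\ell^2(G)\to\Fin$ is unitary and $\dom J=\dom\sqrt{A}$. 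For $w\in\dom\sqrt{A}$ self-adjointness gives $f_w=\sqrt{A}\,w$ pointwise, with $\|f_w\|_{\mathcal{H}(Q)}=\|w\|_{\ell^2}$, and the spectral theorem for $A$ then lets me label each element of $\mathcal{H}(Q)$ by its preimage in $\dom\sqrt{A}$, realizing $\mathcal{H}(Q)=\dom\sqrt{\Lap_{\ell^2}}$ carried by $\sqrt{\Lap_{\ell^2}}$ (equivalently, via $W$, as the copy $\Fin\ci\HE$).

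The main obstacle is precisely this last identification: matching the \emph{abstract} completion $\mathcal{H}(Q)$ with the \emph{concrete} domain $\dom\sqrt{\Lap_{\ell^2}}$ as a set of functions on $G$, rather than as a set of formal $\ell^2$-limits. This requires tracking, through the spectral theorem, the distinction between $\dom\sqrt{A}$, $\operatorname{ran}\sqrt{A}$, and the graph of $\sqrt{A}$ (these coincide only up to the spectral behavior of $A$ near $0$), and using the injectivity $\ker\Lap_{\ell^2}=\{0\}$ to guarantee that the inner product transported along $\sqrt{\Lap_{\ell^2}}$ is nondegenerate. I expect the bookkeeping at the edge of the spectrum to be the delicate part, and the density/core argument above to be the key lemma that makes the whole identification go through.
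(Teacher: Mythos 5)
Your proposal is correct and takes essentially the same route as the paper: the paper's entire proof is the single identity $\sum_{x,y}\gx(x)\gx(y)Q(x,y)=\la \gx,\Lap_{\ell^2}\gx\ra_{\ell^2}=\|\sqrt{\Lap_{\ell^2}}\,\gx\|_{\ell^2}^2$ for finitely supported $\gx$ (its \eqref{eqn:Q-dom}), followed by the assertion that the completion in this norm is the RKHS of $Q$ --- which is precisely your feature map $x\mapsto\sqrt{\Lap_{\ell^2}}\,\gd_x$ read through the standard Moore--Aronszajn construction. Your supplementary steps (the form-core density argument, $\ker\Lap_{\ell^2}=\{0\}$ via Theorem~\ref{thm:no-harm-in-l2}, and the polar decomposition $J=W\sqrt{\Lap_{\ell^2}}$ with final space \Fin) are all correct and merely make explicit what the paper leaves implicit here and records separately in Lemma~\ref{thm:U}, while the ``edge of the spectrum'' identification you flag as delicate is genuinely present but is glossed equally quickly by the paper's one-line completion step.
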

\begin{proof}
 Let $\gx \in \ell^2(G)$ be of finite support, so that $\gx = \sum_{x \in F} \gx(x) \gd_x$, where $F$ is a finite subset of $G$. Then 
	\linenopax
	\begin{align}\label{eqn:Q-dom}
	\sum_{x,y \in G} \gx(x) \gx(y) Q(x,y)
	= \sum_{x \in G} \gx(x) (\Lap_{\ell^2} \gx)(x)
	= \la \gx, \Lap_{\ell^2} \gx\ra_{\ell^2}
	= \|\sqrt{\Lap_{\ell^2}}\gx\|_{\ell^2}^2.
	\end{align}
	Since $\sqrt{\Lap_{\ell^2}}$ is positive and self-adjoint, the completion on the right-hand side of \eqref{eqn:Q-dom} is the RKHS of $Q$.
\end{proof}

\begin{defn}
	Let $P_{\ell^2}$ denote the projection-valued measure of the self-adjoint operator $\Lap_{\ell^2}$, so that $f(\Lap_{\ell^2}) = \int_0^\iy \gl P_{\ell^2}(d\gl)$ for all Borel functions $f$ on $[0,\iy)$.
\end{defn}

\begin{cor}
	For all $\gx \in \dom \sqrt{\Lap_{\ell^2}}$, we have $\| \gx_Q\|_{\HE}^2 = \int_0^\iy \gl \| P_{\ell^2}(d\gl)\gx\|_{\ell^2}^2$.
\end{cor}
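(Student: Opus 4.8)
The plan is to combine the norm identity established in the proof of Theorem~\ref{thm:RKHS(Q)} with the spectral representation of the self-adjoint operator $\Lap_{\ell^2}$. Here $\gx_Q$ denotes the image of $\gx$ under the canonical embedding of $\dom\sqrt{\Lap_{\ell^2}}$ into the RKHS of $Q$ (which Theorem~\ref{thm:RKHS(Q)} identifies with $\dom\sqrt{\Lap_{\ell^2}}$), and the RKHS norm is the one inherited from \HE via $J\gd_x = \gd_x$. The content of the corollary is thus essentially a translation of the quadratic form $\la\gx,\Lap_{\ell^2}\gx\ra_{\ell^2}$ into spectral-measure language.

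First I would recall from \eqref{eqn:Q-dom} that, for any finitely supported $\gx = \sum_{x\in F}\gx(x)\gd_x$, one has
\linenopax
\begin{align}
	\|\gx_Q\|_{\HE}^2
	= \sum_{x,y\in G}\gx(x)\gx(y)Q(x,y)
	= \la\gx,\Lap_{\ell^2}\gx\ra_{\ell^2}
	= \|\sqrt{\Lap_{\ell^2}}\gx\|_{\ell^2}^2.
\end{align}
Because $\Lap_{\ell^2}$ is essentially self-adjoint on $\spn\{\gd_x\}_{x\in G}$ (Corollary~\ref{thm:ess-sa}), the finitely supported functions form a core for $\sqrt{\Lap_{\ell^2}}$, and so this identity extends by continuity in the graph norm of $\sqrt{\Lap_{\ell^2}}$ to every $\gx\in\dom\sqrt{\Lap_{\ell^2}}$. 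This gives $\|\gx_Q\|_{\HE}^2 = \la\gx,\Lap_{\ell^2}\gx\ra_{\ell^2}$ for all admissible $\gx$.

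Next I would apply the spectral theorem to the nonnegative self-adjoint operator $\Lap_{\ell^2}$, whose projection-valued measure $P_{\ell^2}$ is supported on $[0,\iy)$. The spectral theorem yields
\linenopax
\begin{align}
	\la\gx,\Lap_{\ell^2}\gx\ra_{\ell^2}
	= \int_0^\iy \gl\, d\la\gx, P_{\ell^2}(\gl)\gx\ra_{\ell^2},
\end{align}
valid precisely on $\dom\sqrt{\Lap_{\ell^2}}$, since finiteness of $\int_0^\iy\gl\,d\la\gx,P_{\ell^2}(\gl)\gx\ra$ characterizes membership in the form domain. Finally, since each spectral increment $P_{\ell^2}(d\gl)$ is an orthogonal projection, $P_{\ell^2}(d\gl) = P_{\ell^2}(d\gl)^\ad P_{\ell^2}(d\gl)$ gives $d\la\gx,P_{\ell^2}(\gl)\gx\ra_{\ell^2} = \|P_{\ell^2}(d\gl)\gx\|_{\ell^2}^2$, and substituting this produces the asserted identity.

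The only real subtlety is the passage from finitely supported $\gx$ to arbitrary $\gx\in\dom\sqrt{\Lap_{\ell^2}}$; this is where I expect the one genuine step of care to lie, and it rests on the fact that $\spn\{\gd_x\}_{x\in G}$ is a core for $\sqrt{\Lap_{\ell^2}}$, which in turn follows from the essential self-adjointness established in Corollary~\ref{thm:ess-sa}. Everything beyond that is a direct invocation of the spectral theorem and the idempotency of the spectral projections, so no further estimates are needed.
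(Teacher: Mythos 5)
Your proof is correct and takes essentially the same route as the paper, whose entire proof is the single line ``This is immediate from \eqref{eqn:Q-dom}''; you have simply made explicit the two routine steps left implicit there, namely the extension from finitely supported $\gx$ to all of $\dom\sqrt{\Lap_{\ell^2}}$ and the spectral-theorem rewriting of $\la \gx, \Lap_{\ell^2}\gx\ra_{\ell^2}$ as $\int_0^\iy \gl \,\|P_{\ell^2}(d\gl)\gx\|_{\ell^2}^2$. Your one point of care is also handled correctly: essential self-adjointness on $\spn\{\gd_x\}_{x\in G}$ (Corollary~\ref{thm:ess-sa}) makes that span an operator core for $\Lap_{\ell^2}$, hence a form core, which is exactly what the continuity extension requires.
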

\begin{proof}
	This is immediate from \eqref{eqn:Q-dom}.
\end{proof}

\begin{lemma}[{\cite[Lem.~10.15]{OTERN}}]\label{thm:Parseval-frame}
	The system $\{\sqrt{c_{xy}} (v_x-v_y)\}_{(x,y) \in E}$ forms a Parseval frame for \HE, where $E$ denotes the set of (undirected) edges in $G$.
\end{lemma}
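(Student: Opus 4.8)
The plan is to verify the defining Parseval identity directly: I must show that
\[
	\|u\|_\energy^2 = \sum_{(x,y) \in E} \bigl|\la u, \sqrt{c_{xy}}(v_x - v_y)\ra_\energy\bigr|^2
	\qq\text{for every } u \in \HE.
\]
Once this identity is established, there is nothing further to check — in particular, completeness of the system is automatic, since the identity forces any $u$ orthogonal to every frame vector to satisfy $\|u\|_\energy = 0$, hence $u=0$ in \HE.

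First I would compute the frame coefficients. Applying the reproducing identity \eqref{eqn:v_x} to each of $v_x$ and $v_y$, and using that its right-hand side is a difference and so is independent of the choice of representative (Remark~\ref{rem:differences}), I obtain
\[
	\la v_x - v_y, u\ra_\energy = \bigl(u(x) - u(o)\bigr) - \bigl(u(y) - u(o)\bigr) = u(x) - u(y).
\]
Therefore the squared coefficient attached to the edge $(x,y)$ is exactly $c_{xy}\,|u(x)-u(y)|^2$, which is the term appearing (in doubled form) in the energy.

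Second, I would substitute these coefficients into the frame sum and match it against the energy form. The one point demanding care is the bookkeeping between the sum over undirected edges $E$ (each edge counted once) and the sum over ordered pairs $(x,y) \in \Graph \times \Graph$ in \eqref{eqn:def:energy-form}. Each undirected edge $\{x,y\}$ contributes the identical quantity $c_{xy}\,|u(x)-u(y)|^2$ twice in the ordered sum — once as $(x,y)$ and once as $(y,x)$ — so the factor $\tfrac12$ in \eqref{eqn:def:energy-form} exactly compensates, giving
\[
	\sum_{(x,y) \in E} c_{xy}\,|u(x)-u(y)|^2 = \frac12 \sum_{x,y \in \Graph} c_{xy}\bigl(u(x)-u(y)\bigr)^2 = \energy(u) = \|u\|_\energy^2.
\]
This is precisely the Parseval identity. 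The computation is elementary, and I do not anticipate any genuine obstacle: convergence is not an issue, since every term is nonnegative and the right-hand side equals $\energy(u) < \iy$ for $u \in \HE$, and non-adjacent pairs (where $c_{xy}=0$) contribute nothing to either side. The only thing worth stating carefully is the correspondence between the undirected edge-indexing of the frame and the doubled ordered-pair indexing of \energy, i.e.\ the correct absorption of the factor $\tfrac12$.
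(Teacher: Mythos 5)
Your proof is correct and follows essentially the same route as the paper's: compute the frame coefficients $\la v_x - v_y, u\ra_\energy = u(x)-u(y)$ from the reproducing property \eqref{eqn:v_x}, then observe that the edge-indexed sum of $c_{xy}|u(x)-u(y)|^2$ matches $\energy(u)$ once the factor $\tfrac12$ absorbs the double-counting of ordered pairs. The paper compresses this into a single displayed computation, while you spell out the bookkeeping and the (correct) remark that the Parseval identity alone already constitutes the full claim; no substantive difference.
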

\begin{proof}
	Note that
	\linenopax
	\begin{align*}
		\frac12 \sum_{x,y \in G} \left|\la u, \sqrt{c_{xy}} (v_x-v_y) \ra_{\HE}\right|^2
		&= \frac12 \sum_{x,y \in G} \left|\sqrt{c_{xy}}(u(x)-u(y))\right|^2
		= \|u\|_{\HE}^2.
		\qedhere
	\end{align*}
\end{proof}

\subsection{Polar decomposition and spectral resolution}
In \cite{Friedrichs}, it is shown that the spectral measures of \LapF and $\Lap_{\ell^2}$ are mutually absolutely continuous with Radon-Nikodym derivative \gl (the spectral parameter); cf. \eqref{eqn:RN-deriv}. In this section, we show that \LapK and $\Lap_{\ell^2}$ have identical spectral representations.

\begin{lemma}\label{thm:U}
	There is an isometry $U:\ell^2(G) \to \HE$ such that
	\linenopax
	\begin{align}\label{eqn:U}
		J = U(J^\ad J)^{1/2} = (J J^\ad)^{1/2} U.
	\end{align}
\end{lemma}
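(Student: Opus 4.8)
The plan is to read \eqref{eqn:U} as the polar decomposition of the closed, densely defined operator $J$, and then to supply the one network-specific fact that promotes the resulting partial isometry to a genuine isometry, namely the injectivity of $J$. Concretely, I would first invoke the abstract polar decomposition theorem (see \cite{DuSc88} or \cite{KadisonRingroseI}): since $J=\cl J$ is closed and densely defined (Definition~\ref{def:J=clJ}), and since $J^\ad J$ is self-adjoint and nonnegative (Remark~\ref{rem:JJ*-and-J*J}, with nonnegativity from $\la J^\ad J\gx,\gx\ra_{\ell^2}=\|J\gx\|_\HE^2\geq 0$), there is a partial isometry $U:\ell^2(G)\to\HE$ with $J=U(J^\ad J)^{1/2}$, whose initial space is $\cl{\operatorname{ran}(J^\ad J)^{1/2}}=(\ker J)^\perp$ and whose final space is $\cl{\operatorname{ran} J}$.

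The key step is to show $\ker J=0$, so that the initial space of $U$ is all of $\ell^2(G)$. For a closed densely defined operator one has $\ker J=(\operatorname{ran} J^\ad)^\perp$ (apply $\ker T^\ad=(\operatorname{ran} T)^\perp$ to $T=J^\ad$ and use $J^{\ad\ad}=J$), so it suffices to prove $\operatorname{ran} J^\ad$ is dense in $\ell^2(G)$. By Lemma~\ref{thm:J-adjoint}, $\operatorname{ran} J^\ad$ contains $\gd_x-\gd_o$ for every $x\in G$; if $f\in\ell^2(G)$ satisfies $\la f,\gd_x-\gd_o\ra_{\ell^2}=f(x)-f(o)=0$ for all $x$, then $f$ is constant, and the only constant function in $\ell^2(G)$ (for infinite $G$) is $0$. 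Hence $\operatorname{ran} J^\ad$ is dense, $\ker J=0$, and $U$ is an isometry with $U^\ad U=I_{\ell^2}$.

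Finally I would deduce the second identity $J=(JJ^\ad)^{1/2}U$ from the intertwining that accompanies the polar decomposition. Writing $A=J^\ad J$ and $B=JJ^\ad$, the relation $J=UA^{1/2}$ gives $J^\ad=A^{1/2}U^\ad$ and hence $B=UAU^\ad$. The final space of $U$ is $\cl{\operatorname{ran} J}=\Harm^\perp=\Fin$ (Lemma~\ref{thm:kerJ*=Harm} and Theorem~\ref{thm:HE=Fin+Harm}), while $\ker B=\ker J^\ad=\Harm$; thus $B$ is unitarily equivalent to $A$ on $\operatorname{ran} U$ and vanishes on its orthogonal complement. Applying the continuous functional calculus to $t\mapsto\sqrt{t}$ (which fixes $0$) then yields $B^{1/2}=UA^{1/2}U^\ad$, and right-multiplication by $U$ gives $(JJ^\ad)^{1/2}U=UA^{1/2}U^\ad U=UA^{1/2}=J$.

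The main obstacle is not the abstract polar decomposition, which is off the shelf, but the injectivity of $J$: since $J$ is merely the closure of an inclusion it could a priori have nontrivial kernel, and the entire content of the lemma (an isometry, not just a partial isometry) rests on ruling this out. The computation above reduces injectivity to the density of $\spn\{\gd_x-\gd_o\}$ in $\ell^2(G)$, which is immediate once one feeds in the explicit form of $J^\ad$ from Lemma~\ref{thm:J-adjoint}. A secondary point requiring care is the functional-calculus identity $B^{1/2}=UA^{1/2}U^\ad$ for the unbounded operators $A,B$; this is cleanest to justify on the spectral subspaces, or equivalently by first verifying $B^{1/2}U=UA^{1/2}$ on a core where both sides are computable.
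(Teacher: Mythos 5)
Your proof is correct, and its skeleton is the same as the paper's: read \eqref{eqn:U} as the polar decomposition of the closed, densely defined operator $J=\cl J$, and upgrade the resulting partial isometry to a genuine isometry by showing $\ker J=\{0\}$ --- the paper's entire proof is three sentences doing exactly this. Where you differ is in the mechanism for the key injectivity step. The paper gets $\ker J=\{0\}$ by citing Lemma~\ref{thm:no-harm-in-l2}: if $Ju=0$ then $u$ represents the zero class in \HE, i.e.\ $u$ is pointwise constant (equivalently $\Lap_{\ell^2}u=J^\ad Ju=0$), and the only constant or harmonic function in $\ell^2(G)$ on an infinite network is $0$. You instead use the duality $\ker J=(\ran J^\ad)^\perp$ together with the explicit formula of Lemma~\ref{thm:J-adjoint}, reducing injectivity to the density of $\spn\{\gd_x-\gd_o\}_{x\in G}$ in $\ell^2(G)$ --- a different key lemma, but an equally short and correct argument, and your parenthetical restriction to infinite $G$ is exactly the right caveat (on a finite network $\one\in\ell^2(G)$ and $J\one=0$, so injectivity genuinely fails there). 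Two further points in your write-up are details the paper leaves implicit under the words ``polar decomposition,'' and you handle both correctly: the identification of the final space of $U$ as $\cl{\ran J}=\Harm^\perp=\Fin$ via Lemma~\ref{thm:kerJ*=Harm} and Theorem~\ref{thm:HE=Fin+Harm}, which is what licenses the paper's subsequent display $UU^\ad=\proj_{\Fin}$; and the derivation of the second equality $J=(JJ^\ad)^{1/2}U$ from the intertwining $JJ^\ad=U(J^\ad J)U^\ad$ by functional calculus, where your step $J^\ad=(J^\ad J)^{1/2}U^\ad$ holds with genuine equality (not mere containment) because $U$ is bounded and everywhere defined, so the computation goes through for the unbounded operators without additional hypotheses.
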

\begin{proof}
	Existence of a partial isometry comes from the fact that $J$ is \emph{closed}. 
	Observe that $\ker J = \{0\}$ by Lemma~\ref{thm:no-harm-in-l2} and $\ker J^\ad = \Harm$ by Lemma~\ref{thm:kerJ*=Harm}. As a consequence, we have that $U$ is an isometry, instead of just a partial isometry.
\end{proof}

Now we have
	\linenopax
\begin{align*}
	U^\ad U = \id_{\ell^2}
	\q\text{and}\q UU^\ad = \proj_{\Fin},
\end{align*}
and Corollary~\ref{thm:spec(LapK)} follows by general theory (see, e.g., \cite{DuSc88,Rud91,ReedSimonI}). 

\begin{cor}\label{thm:spec(LapK)}
	$\spec_{\HE}(\LapK) \cup \{0\} = \spec_{\ell^2}(\Lap_{\ell^2}) \cup \{0\}$.
\end{cor}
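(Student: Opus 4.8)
The plan is to exploit the polar decomposition from Lemma~\ref{thm:U} to realize \LapK as a unitary conjugate of $\Lap_{\ell^2}$, up to the harmonic subspace. First I would record the two relations established just after Lemma~\ref{thm:U}, namely $U^\ad U = \id_{\ell^2}$ and $UU^\ad = \proj_{\Fin}$, which together say that $U$ is a \emph{unitary} isomorphism from $\ell^2(G)$ onto the closed subspace $\Fin \ci \HE$. The triviality $\ker J = \{0\}$ (Lemma~\ref{thm:no-harm-in-l2}) guarantees that $(J^\ad J)^{1/2} = \sqrt{\Lap_{\ell^2}}$ has dense range in $\ell^2(G)$, so that $U$ is genuinely isometric on all of $\ell^2(G)$ rather than merely a partial isometry, and that its final space $\overline{\operatorname{ran} J} = \Fin$ matches $UU^\ad = \proj_{\Fin}$.

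Next I would compute $JJ^\ad$ directly from \eqref{eqn:U}. Since $(J^\ad J)^{1/2}$ is self-adjoint, the decomposition $J = U(J^\ad J)^{1/2}$ gives $J^\ad = (J^\ad J)^{1/2} U^\ad$, and therefore
\begin{align*}
	\LapK = JJ^\ad = U(J^\ad J)^{1/2}(J^\ad J)^{1/2} U^\ad = U (J^\ad J) U^\ad = U \Lap_{\ell^2} U^\ad,
\end{align*}
using $J^\ad J = \Lap_{\ell^2}$ from Theorem~\ref{thm:JJ-Lap}(i). Because $U$ maps $\ell^2(G)$ unitarily onto \Fin, this identity exhibits the restriction of \LapK to \Fin as unitarily equivalent to $\Lap_{\ell^2}$; unitary equivalence preserves the spectrum, so the \emph{nonzero} part of the spectrum of \LapK on \Fin coincides with that of $\Lap_{\ell^2}$.

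It then remains to account for the orthogonal complement $\Harm = \HE \ominus \Fin$ (Royden decomposition, Theorem~\ref{thm:HE=Fin+Harm}). Since $\Harm = \ker J^\ad$ by Lemma~\ref{thm:kerJ*=Harm}, the adjoint $J^\ad$ annihilates \Harm, whence $\LapK = JJ^\ad$ vanishes identically on \Harm; equivalently, $U^\ad$ kills \Harm (as $\ker U^\ad = (\operatorname{ran}U)^\perp = \Harm$), so the formula $\LapK = U\Lap_{\ell^2}U^\ad$ already builds this in. Thus, relative to $\HE = \Fin \oplus \Harm$, we have $\LapK = (U\Lap_{\ell^2} U^\ad) \oplus 0_{\Harm}$, and the spectrum of a direct sum is the union of the spectra of the summands. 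The \Harm-summand contributes exactly $\{0\}$ when $\Harm \neq \{0\}$ and nothing when $\Harm = \{0\}$, so in either case adjoining $\{0\}$ to both sides gives $\spec_{\HE}(\LapK) \cup \{0\} = \spec_{\ell^2}(\Lap_{\ell^2}) \cup \{0\}$.

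The one point that requires genuine care — and the main obstacle — is the bookkeeping at $\gl = 0$. The value $0$ may or may not lie in $\spec_{\ell^2}(\Lap_{\ell^2})$, and \Harm may or may not be trivial; the explicit adjunction of $\{0\}$ to both sides in the statement is precisely what lets the identity hold uniformly without a case split. I would therefore stress that the unitary equivalence $\LapK|_{\Fin} \cong \Lap_{\ell^2}$ controls the entire nonzero part of both spectra, while the $\{0\}$ appended on each side absorbs the harmonic kernel of \LapK and any spectral mass of $\Lap_{\ell^2}$ at the origin.
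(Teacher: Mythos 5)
Your proposal is correct and takes essentially the same route as the paper: the paper derives the corollary from Lemma~\ref{thm:U} together with the relations $U^\ad U = \id_{\ell^2}$ and $UU^\ad = \proj_{\Fin}$, appealing to ``general theory,'' and your explicit computation $\LapK = JJ^\ad = U(J^\ad J)U^\ad = U\Lap_{\ell^2}U^\ad$ combined with the splitting $\HE = \Fin \oplus \Harm$ (so that $\LapK$ is the zero operator on $\Harm = \ker J^\ad$) is precisely that general theory spelled out, matching the unitary equivalence $\LapK|_{\Fin} \cong \Lap_{\ell^2}$ recorded in Theorem~\ref{thm:gmK=gm2}. Your attention to the bookkeeping at $\gl=0$ (why adjoining $\{0\}$ to both sides avoids a case split on whether $\Harm$ is trivial) is a sound amplification of the same argument, not a different one.
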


\begin{defn}\label{def:P_Kr,P_ell2}
	Let $P_{Kr}$ and $P_{\ell^2}$ denote the projection-valued measures in the spectral resolutions of \LapK and $\Lap_{\ell^2}$, respectively.
\end{defn}

\begin{theorem}\label{thm:gmK=gm2}
	For all $\gx \in \ell^2(G)$, the spectral measures of \LapK and $\Lap_{\ell^2}$ are mutually absolutely continuous with Radon-Nikodym derivatives equal to 1:
	\linenopax
	\begin{align}\label{eqn:gmK=gm2}
		d\gm_{Kr}^{U\gx}(\gl) = d\gm_{\ell^2}^\gx(\gl). 
	\end{align}
	Furthermore, the restriction $\LapK|_{\Fin}$ is unitarily equivalent to $\Lap_{\ell^2}$ (recall from Theorem~\ref{thm:HE=Fin+Harm} that $\Fin = \HE \ominus \Harm = \cj{\spn\{\gd_x\}_{x \in G}}$).
\end{theorem}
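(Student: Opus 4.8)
The plan is to read Theorem~\ref{thm:gmK=gm2} off the polar decomposition of Lemma~\ref{thm:U}, which is nothing but an intertwining relation between the two Laplacians. Recall from Theorem~\ref{thm:JJ-Lap} that $\Lap_{\ell^2} = J^\ad J$ and $\LapK = JJ^\ad$, and that \eqref{eqn:U} provides an isometry $U:\ell^2(G)\to\HE$ with $U^\ad U = \id_{\ell^2}$, $UU^\ad = \proj_\Fin$, and
\[
	\LapK^{1/2} U = U\,\Lap_{\ell^2}^{1/2}.
\]
The entire theorem follows from this single relation together with the fact that $U$ is unitary from $\ell^2(G)$ onto $\Fin$.

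First I would upgrade the half-power intertwining to the full one. Applying $\LapK^{1/2}$ to both sides and substituting the relation a second time gives, for $\gx\in\dom\Lap_{\ell^2}$,
\[
	\LapK U\gx = \LapK^{1/2}\bigl(U\Lap_{\ell^2}^{1/2}\gx\bigr) = U\Lap_{\ell^2}\gx,
\]
the only subtlety being the bookkeeping of domains (one checks $U(\dom\Lap_{\ell^2})\ci\dom\LapK$ along the way). With $U\Lap_{\ell^2} = \LapK U$ in hand, the standard functional calculus for the intertwining of self-adjoint operators by an isometry promotes this to $U f(\Lap_{\ell^2}) = f(\LapK)U$ for every bounded Borel $f$; choosing $f$ to be the indicator of a Borel set $B\ci[0,\iy)$ yields the intertwining of the spectral resolutions,
\[
	U P_{\ell^2}(B) = P_{Kr}(B)\,U.
\]
Equation \eqref{eqn:gmK=gm2} is then immediate: for $\gx\in\ell^2(G)$,
\[
	\gm_{Kr}^{U\gx}(B) = \|P_{Kr}(B)U\gx\|_\HE^2 = \|U P_{\ell^2}(B)\gx\|_\HE^2 = \|P_{\ell^2}(B)\gx\|_{\ell^2}^2 = \gm_{\ell^2}^\gx(B),
\]
the third equality being exactly the isometry of $U$. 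Equal measures are of course mutually absolutely continuous with Radon--Nikodym derivative $1$.

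For the unitary equivalence, I would observe that $\Fin = \operatorname{ran} U$ (since $UU^\ad = \proj_\Fin$) and that $\Fin$ reduces $\LapK$: the relation $\LapK U = U\Lap_{\ell^2}$ carries $\dom\Lap_{\ell^2}$ into $\operatorname{ran} U = \Fin$, while the orthogonal complement $\Harm = \ker J^\ad = \ker JJ^\ad = \ker\LapK$ (Lemma~\ref{thm:kerJ*=Harm} and Theorem~\ref{thm:HE=Fin+Harm}) is precisely the subspace on which $\LapK$ vanishes. Since $U:\ell^2(G)\to\Fin$ is isometric and onto, hence unitary, the identity $U\Lap_{\ell^2} = (\LapK|_\Fin)\,U$ is exactly the assertion that $\LapK|_\Fin$ and $\Lap_{\ell^2}$ are unitarily equivalent. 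The step I expect to be the main obstacle is the technical passage from the bounded intertwining of the square roots to the statements about the unbounded operators $\LapK$, $\Lap_{\ell^2}$ and their projection-valued measures --- that is, verifying the domain inclusions and confirming that $U$ genuinely transports one spectral resolution onto the other. This is most cleanly dispatched by invoking the general principle that the partial isometry in a polar decomposition intertwines the two nonnegative factors, so that $U$ implements a unitary equivalence between $\Lap_{\ell^2}$ and the restriction of $\LapK$ to $(\ker\LapK)^\perp = \Fin$.
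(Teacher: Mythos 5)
Your proposal is correct and follows essentially the same route as the paper's own proof: both read the intertwining $U f(\Lap_{\ell^2}) = f(\LapK) U$ off the polar decomposition $J = U(J^\ad J)^{1/2} = (JJ^\ad)^{1/2}U$ of Lemma~\ref{thm:U} together with Theorem~\ref{thm:JJ-Lap}, specialize $f$ to indicator functions of Borel sets to obtain $P_{Kr}(A)U = UP_{\ell^2}(A)$, and then deduce \eqref{eqn:gmK=gm2} from the isometry of $U$. The only difference is one of explicitness: you carry out the domain bookkeeping and prove the ``Furthermore'' clause in detail (via $\ran U = \Fin$, $\ker\LapK = \ker J^\ad = \Harm$, and the unitarity of $U:\ell^2(G)\to\Fin$), whereas the paper derives the intertwining by polynomial approximation and leaves the unitary equivalence $\LapK|_{\Fin}$ with $\Lap_{\ell^2}$ implicit in the already-recorded identities $U^\ad U = \id_{\ell^2}$ and $UU^\ad = \proj_{\Fin}$.
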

\begin{proof}
	By the usual approximation procedure, Theorem~\ref{thm:JJ-Lap} combines with general spectral theory to give $U f(J^\adj J) = f(JJ^\adj)U$ first for when $f$ is a polynomial, and then for when $f$ is a Borel function. For $A \in \sB(\bR_+)$, we take a characteristic function $f = \gx_A$ and obtain 
	\linenopax
	\begin{align}\label{eqn:PKU=UP2}
		P_{Kr}U\gx = UP_{\ell^2}(A)\gx, \qq \text{for all } \gx \in \ell^2(G) \text{ and } A \in \sB(\bR_+).
	\end{align}
	For $f \in \HE$ and $\gx \in \ell^2(G)$, define
	\linenopax
	\begin{align}\label{eqn:muKU=Umu2}
		\gm_{Kr}^{f}(A) = \|P_{Kr}(A)f\|_{\HE}^2
		\q\text{and}\q
		\gm_{\ell^2}^\gx(A) = \|P_{\ell^2}(A)\gx\|_{\ell^2}^2,
		\qq\text{for all } A \in \sB(\bR_+).
	\end{align}
	Then, since $U$ is an isometry, we have
	\linenopax
	\begin{align*}
		\gm_{Kr}^{U\gx}(A) &= \|P_{Kr}(A)U\gx \|_{\HE}^2 = \|UP_{\ell^2}(A)\gx\|_{\ell^2}^2 = \|P_{\ell^2}(A)\gx\|_{\ell^2}^2 = \gm_{\ell^2}^\gx(A).
		\qedhere
	\end{align*}
\end{proof}

\begin{remark}
	For $P_{Kr}$, $P_{\ell^2}$ as in Definition~\ref{def:P_Kr,P_ell2}, we have the following.
	\begin{enumerate}
	\item $P_{\ell^2}(\{0\}) = 0$, since there are no nonzero harmonic functions in the domain of $\Lap_{\ell^2}$.
	\item $P_{Kr}(\{0\})h = h$ if and only if $h \in \HE$ and $\Lap h = 0$.
	\end{enumerate}
\end{remark}

%

\subsection{Comparison of the Krein and Friedrichs extensions}

We recall some definitions from \cite{Friedrichs}, in which the construction of the Friedrichs extension was carried out using a mapping \gF that sends $\gd_x \in \ell^2(G)$ to $w_x \in \HE$, in place of $J:\gd_x \to \gd_x$.

\begin{defn}\label{def:dipole}
  A \emph{monopole} is any $u \in \HE$ satisfying the pointwise identity $\Lap u = \gd_x$, for some vertex $x \in \Graph$. We denote a monopole solving this equation by $w_x$.
\end{defn}

  
\begin{remark}\label{rem:normalization}
  Denote the unique energy-minimizing monopole at $o$ by $w_o$; the existence of such an object is explained in \cite[\S3.1]{DGG}.
  We will be interested in the family of monopoles defined by
  \linenopax
  \begin{align}\label{eqn:w_x}
    \monov := w_o + v_x, \qq x \neq o,
  \end{align}
  and we use the representatives specified by
  \linenopax
  \begin{align}\label{eqn:w_x(o)}
    \monov(y) = \la \monov,\monoy\ra_{\HE} = \monoy(x), 
    \qq\text{and}\qq
    v_x(o)=0.
  \end{align}
  When $\Harm=0$, $\energy(\monov) = \la \monov,\monov\ra_{\HE} = \monov(x)$ is the \emph{capacity} of $x$; see, e.g., \cite[\S4.D]{Woess09}.
\end{remark}

\begin{defn}\label{def:Phi}
  Define $\gF : \ell^2(G) \to \HE$ on $\dom\gF = \spn\{\gd_x\}_{x \in G}$ by $\gF\gd_x = \monov$. 
\end{defn}

  Note that $\ran \gF$ is dense in \HE because it contains $\spn\{v_x\}_{x \in G}$; see \eqref{eqn:w_x}. 
  Now let $\gx \in \ell^2(G)$ so that \gx is a finite linear combination of Dirac masses. Then the respective spectral measures of $\LapF$ and $\Lap_{\ell^2}$ are
\linenopax
\begin{align}
	d\gm^{\gF\gx}_{\sF}(\gl) := \|P_\sF(d\gl)\gF\gx\|_\energy^2
	\qq\text{and}\qq
	d\gm^\gx_{\ell^2}(\gl) := \|P_{\ell^2}(d\gl)\gx\|_2^2.
\end{align}

\begin{theorem}[{\cite[Thm.~52]{Friedrichs}}]
	The spectral measures of \LapF and $\Lap_{\ell^2}$ are mutually absolutely continuous with Radon-Nikodym derivatives related by
\linenopax
\begin{align}\label{eqn:RN-deriv}
	\gl d\gm^{\gF\gx}_{\sF}(\gl) = d\gm^\gx_{\ell^2}(\gl).
\end{align}
Consequently, the spectral gaps of \LapF and $\Lap_{\ell^2}$ are identical.
\end{theorem}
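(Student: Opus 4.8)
The plan is to mirror the analysis of the Krein extension in Theorem~\ref{thm:gmK=gm2}, replacing the inclusion $J$ by the monopole map $\gF$ and tracking the extra factor $\gl$ that arises because $\gF$ sends $\gd_x$ to the monopole $\monov$ (morally $\Lap^{-1}\gd_x$) rather than to $\gd_x$ itself. The mechanism is an isometry intertwining $\Lap_{\ell^2}$ and \LapF; once it is in place, the Radon--Nikodym identity \eqref{eqn:RN-deriv} is pure spectral-calculus bookkeeping.

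First I would establish the intertwining on $\spn\{\gd_x\}_{x\in G}$. Using $\monov = w_o + v_x$ and the identity $c(x)=\sum_{y\nbr x}c_{xy}$ to cancel the $w_o$ contributions, together with Lemma~\ref{thm:d=v},
\[
	\gF\Lap_{\ell^2}\gd_x = c(x)\monov - \sum_{y\nbr x}c_{xy}\monoy = c(x)v_x - \sum_{y\nbr x}c_{xy}v_y = \gd_x = \LapF\gF\gd_x,
\]
the last equality holding because $\monov$ is a monopole. Hence $\LapF\gF = \gF\Lap_{\ell^2}$ on finitely supported functions.

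Next, set $W := {\LapF}^{1/2}\gF$ on $\spn\{\gd_x\}$. By the previous step together with Lemmas~\ref{thm:<delta_x,v>=Lapv(x)} and~\ref{thm:Lap-mono-Kron},
\[
	\la W\gd_x, W\gd_y\ra_\energy = \la \gF\gd_x, \LapF\gF\gd_y\ra_\energy = \la \monov, \gd_y\ra_\energy = \Lap\monov(y) = \gd_{xy},
\]
so $W$ carries the onb $\{\gd_x\}$ to an orthonormal system and extends to an isometry $W:\ell^2(G)\to\HE$ with range in \Fin. Combined with the first step, $W\Lap_{\ell^2} = {\LapF}^{1/2}\LapF\gF = \LapF W$, so $W$ intertwines the two self-adjoint operators and $WP_{\ell^2}(A) = P_\sF(A)W$ for all Borel $A$. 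Exactly as in Theorem~\ref{thm:gmK=gm2}, this gives $\gm^{W\gx}_\sF = \gm^\gx_{\ell^2}$. Since $W\gx = {\LapF}^{1/2}\gF\gx$ and $P_\sF$ commutes with ${\LapF}^{1/2}$, standard spectral calculus yields $\gm^{W\gx}_\sF(A) = \int_A \gl\,d\gm^{\gF\gx}_\sF(\gl)$; comparing the two expressions gives $d\gm^\gx_{\ell^2} = \gl\,d\gm^{\gF\gx}_\sF$, which is \eqref{eqn:RN-deriv}. Mutual absolute continuity, and hence the coincidence of spectral gaps, follow; the only point away from routine is the behavior at $\gl=0$, which corresponds to \Harm and carries no $\ell^2$ mass.

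The hard part will be the rigor of the unbounded-operator steps rather than the algebra. I must justify that $\monov \in \dom\LapF$ with $\LapF\monov = \gd_x$ --- that is, that the Friedrichs extension, and not merely some self-adjoint extension, acts as the pointwise Laplacian on the energy-minimizing monopoles --- and that $\gF\gx$ lies in $\dom {\LapF}^{1/2}$; both facts rely on the explicit construction of \LapF in \cite{Friedrichs} and on transience (monopoles exist only then). Promoting the finite-support intertwining to the closed operators and controlling a possible atom of the spectral measures at $\gl=0$ are the delicate points; everything else is the spectral-theorem bookkeeping indicated above.
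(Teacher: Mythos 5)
This paper contains no proof of the statement---it is imported verbatim from \cite[Thm.~52]{Friedrichs}---so the only in-paper benchmark is the Krein-side analogue, Theorem~\ref{thm:gmK=gm2}, and your proposal is exactly the natural transplant of that argument with $J$ replaced by \gF and the factor \gl tracked through $W=\LapF^{1/2}\gF$. The core is sound: the intertwining $\gF\Lap_{\ell^2}\gd_x=\gd_x=\LapF\gF\gd_x$ is correct (the $w_o$-terms cancel because $c(x)=\sum_{y\nbr x}c_{xy}$, and Lemma~\ref{thm:d=v} does the rest); the isometry computation via Lemma~\ref{thm:Lap-mono-Kron} is correct; and the promotion of $W\Lap_{\ell^2}\ci \LapF W$ from $\spn\{\gd_x\}$ to the spectral projections is legitimate because $\spn\{\gd_x\}$ is a core for $\Lap_{\ell^2}$ (Corollary~\ref{thm:ess-sa}), $W$ is bounded, and \LapF is closed. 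Two of your flagged worries are in fact vacuous in this paper's setup: since $\dom\LapE=\spn\{\monov\}_{x\in G}$ by Definition~\ref{def:domLapE}, \emph{every} self-adjoint extension of \LapE contains the monopoles in its domain and sends $\monov\mapsto\gd_x$, so nothing Friedrichs-specific is needed there; and $\gd_x\in\spn\{\monov\}_{x\in G}$ (combine \eqref{eqn:d=v} with $v_y=\monoy-w_o$) gives $\gd_x\in\dom\LapF^{1/2}$, which is what justifies the domain shuffle $\LapF(\LapF^{1/2}\monov)=\LapF^{1/2}\LapF\monov=\LapF^{1/2}\gd_x$. One side remark is unjustified, though unused: you assert $\ran W\ci\Fin$, but unlike the polar-decomposition isometry $U$ of Lemma~\ref{thm:U}, whose final projection is $\proj_{\Fin}$ precisely because $\ker J^\ad=\Harm$, there is no analogous reason here; drop the claim.

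The one genuine gap is exactly where you wave your hands: mutual absolute continuity at $\gl=0$. Your identity $\int_A\gl\,d\gm^{\gF\gx}_\sF(\gl)=\gm^\gx_{\ell^2}(A)$ gives $\gm^\gx_{\ell^2}\ll\gm^{\gF\gx}_\sF$ for free, but the converse fails if $\gm^{\gF\gx}_\sF$ has an atom at $0$, and your remark that $\gl=0$ ``carries no $\ell^2$ mass'' addresses the wrong measure: $\gm^\gx_{\ell^2}(\{0\})=0$ indeed follows from Theorem~\ref{thm:no-harm-in-l2}, but that says nothing about $P_\sF(\{0\})\gF\gx$. What is needed is $\gF\gx\perp\ker\LapF$. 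Concretely, $\ker\LapF=\{u\in\dom q_F : q_F(u)=0\}$ where $q_F$ is the closure of the form $q(\gF\gx)=\la\gF\gx,\Lap\gF\gx\ra_\energy=\|\gx\|_{\ell^2}^2$ computed in \eqref{eqn:semibounded}; so a nonzero kernel vector is precisely a nonzero $u\in\HE$ arising as $u=\lim\gF\gx_n$ with $\gx_n\to 0$ in $\ell^2(G)$---that is, triviality of $\ker\LapF$ is \emph{equivalent} to closability of \gF as an operator $\ell^2(G)\to\HE$, the Friedrichs-side analogue of this paper's Theorem~\ref{thm:closable} for $J$. That closability is supplied in \cite{Friedrichs}, but your proof never invokes it; without it, the identity \eqref{eqn:RN-deriv} survives while ``mutually absolutely continuous'' does not. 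Everything else is the routine spectral bookkeeping you describe.
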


\begin{cor}\label{thm:muKr=muF}
	The spectral measures of \LapF and \LapK are mutually absolutely continuous with Radon-Nikodym derivatives related by
\linenopax
\begin{align}\label{eqn:RN-deriv}
	\gl d\gm^{\gF\gx}_{\sF}(\gl) = d\gm^{U\gx}_{Kr}(\gl).
\end{align}
\end{cor}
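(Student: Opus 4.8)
The plan is to derive the corollary by transitivity, routing both spectral measures through the common reference measure $d\gm^\gx_{\ell^2}$ supplied by the $\ell^2$ Laplacian. First I would invoke Theorem~\ref{thm:gmK=gm2}, which asserts that for every $\gx \in \ell^2(G)$ the Krein spectral measure along $U\gx$ agrees with the $\ell^2$ spectral measure along $\gx$, namely $d\gm^{U\gx}_{Kr}(\gl) = d\gm^\gx_{\ell^2}(\gl)$. In particular these two measures are mutually absolutely continuous with Radon--Nikodym derivative identically $1$. Next I would recall the Friedrichs theorem (Thm.~52 of \cite{Friedrichs}), quoted just above, which gives $\gl\,d\gm^{\gF\gx}_{\sF}(\gl) = d\gm^\gx_{\ell^2}(\gl)$ for $\gx$ a finite linear combination of Dirac masses.

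Chaining these two identities through their shared $\ell^2$ measure then yields $\gl\,d\gm^{\gF\gx}_{\sF}(\gl) = d\gm^\gx_{\ell^2}(\gl) = d\gm^{U\gx}_{Kr}(\gl)$, which is exactly the asserted relation \eqref{eqn:RN-deriv}. Since $\gl$ is strictly positive away from the origin, and since the point $\{0\}$ carries no $\ell^2$ mass (there are no nonzero $\ell^2$-harmonic functions, cf.\ Theorem~\ref{thm:no-harm-in-l2}), this displayed equation shows that $d\gm^{\gF\gx}_\sF$ and $d\gm^{U\gx}_{Kr}$ have the same null sets on $(0,\iy)$, establishing the claimed mutual absolute continuity of the spectral measures of \LapF and \LapK.

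The only point needing slight care --- and it is the nearest thing to an obstacle --- is a matching of the admissible test vectors: the Friedrichs theorem is stated for finitely supported $\gx$, whereas Theorem~\ref{thm:gmK=gm2} is stated for all $\gx \in \ell^2(G)$. I would resolve this by working on the dense subspace $\spn\{\gd_x\}_{x \in G}$, on which both statements are simultaneously available, and then extending the conclusion to all of $\ell^2(G)$ by the standard continuity of the map $\gx \mapsto \gm^\gx$ in the Borel spectral calculus. No new estimate is required: the corollary is a purely formal consequence of composing the two absolute-continuity statements, with the factor of $\gl$ in \eqref{eqn:RN-deriv} being inherited directly from the Friedrichs side and the Krein side contributing the trivial derivative $1$ of Theorem~\ref{thm:gmK=gm2}.
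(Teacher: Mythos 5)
Your proposal is correct and is precisely the paper's (implicit) argument: the corollary is stated immediately after Theorem~\ref{thm:gmK=gm2} and the quoted Friedrichs result \cite[Thm.~52]{Friedrichs}, and is obtained exactly by chaining $\gl\,d\gm^{\gF\gx}_{\sF}(\gl) = d\gm^\gx_{\ell^2}(\gl) = d\gm^{U\gx}_{Kr}(\gl)$ through the common $\ell^2$ spectral measure. Your additional care about the test-vector domains and the absence of $\ell^2$ mass at $\gl=0$ goes slightly beyond what the paper records, but it is consistent with the same approach.
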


\section{The Green operator}

In this section, we ignore domains for the moment and look at some formal computations. 

\begin{defn}
	Define the probability transition kernel by $p(x,y) = \frac{c_{xy}}{c(x)}$, and the probability transition operator by $Pf(x) = \sum_{y \nbr x} p(x,y) f(y)$. Note that we do not consider domains here. Writing $c$ for the multiplication operator, the (formal) \emph{Green operator} for \Lap is
	\linenopax
	\begin{align}\label{eqn:G}
		Gf := (\id - P)^{-1} \tfrac1c f.
	\end{align}
\end{defn}


\begin{theorem}\label{thm:G-defect}
	If the random walk on $G$ is aperiodic, there is a vector $f:G \to \bR$ satisfying $\Lap f = -f$ if and only if there is a harmonic function $h$ of the form $h = f+Gf$.
\end{theorem}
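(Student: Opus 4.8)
The plan is to reduce the whole biconditional to the single identity $\Lap G = \id$, after which both directions become one-line computations. First I would rewrite the Laplacian through the transition operator $P$. Since
\[
  (\Lap u)(x) = c(x)u(x) - \sum_{y \nbr x} c_{xy} u(y) = c(x)\bigl(u(x) - (Pu)(x)\bigr),
\]
we obtain the factorization $\Lap = c\,(\id - P)$, where $c$ denotes multiplication by the function $c(x)$. Comparing this with $Gf = (\id - P)^{-1}\tfrac1c f$ then yields
\[
  \Lap G f = c\,(\id - P)(\id - P)^{-1}\tfrac1c f = c\cdot\tfrac1c f = f,
\]
so that $G$ is a (right) inverse of $\Lap$.

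With this identity in hand, both implications are immediate. For the forward direction, assume $\Lap f = -f$ and put $h := f + Gf$; then $\Lap h = \Lap f + \Lap G f = -f + f = 0$, so $h$ is harmonic and of the asserted form. For the converse, if $h = f + Gf$ is harmonic, then $0 = \Lap h = \Lap f + \Lap G f = \Lap f + f$, whence $\Lap f = -f$. Thus the assignment $f \mapsto f + Gf$ realizes the correspondence in both directions, and the entire equivalence is a consequence of $\Lap G = \id$.

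The real content of the statement — and the reason aperiodicity is assumed — is the well-definedness of $G$, which is what legitimizes the manipulation above as a genuine pointwise identity rather than a merely symbolic one. Because $\id - P$ annihilates the ($P$-)harmonic functions, $(\id - P)^{-1}$ is not a literal inverse, and $G$ must be interpreted through a potential (or resolvent-limit) kernel. I expect this to be the main obstacle: aperiodicity is the standard hypothesis ensuring that the associated potential (Green) kernel converges, since for a periodic walk the return probabilities $P^n(x,x)$ oscillate with the period and the naive kernel fails to be well-defined. It is precisely this that guarantees $Gf$ is an honest function on $G$ with $\Lap G f = f$ holding at every vertex. Once $Gf$ is secured as a function, the two computations above close the argument; the ambiguity of $G$ modulo $\Harm$ is harmless, since altering $Gf$ by a harmonic function changes $h$ only within the harmonic functions.
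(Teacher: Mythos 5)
Your algebraic skeleton is sound and partially coincides with the paper's: the equivalence ``$h=f+Gf$ harmonic $\iff \Lap f+f=0$'' via $\Lap h=\Lap f+\Lap Gf$ is exactly how the paper handles one direction, and your factorization $\Lap = c(\id-P)$ is the same one the paper uses. The genuine gap is at the point you yourself flag as ``the real content'': the well-definedness of $Gf$ for the defect vector $f$. You assert it rather than prove it, and the heuristic you offer is wrong. Aperiodicity does \emph{not} make the potential/Green kernel converge: convergence of $\sum_n P^n(x,y)$ is equivalent to transience of the walk and fails on every recurrent network, aperiodic or not. So ``aperiodicity $\Rightarrow$ the Green kernel converges'' cannot be the mechanism, and until $Gf$ is an honest function your identity $\Lap G=\id$ remains exactly the symbolic manipulation the theorem is supposed to legitimize. (Your converse direction is fine as stated, since there the existence of $Gf$ is part of the hypothesis $h=f+Gf$; once the Neumann series converges pointwise, $\Lap Gf=f$ follows term by term using local finiteness.)

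The paper closes the gap with an argument your proposal is missing. From $\Lap f=-f$ one gets $Pf=\left(1+\frac1c\right)f$, and iterating this relation yields
\begin{align*}
	P^{n+1}f \;=\; f+\sum_{k=0}^{n}P^{k}\!\left(\tfrac{f}{c}\right),
\end{align*}
so the partial sums of the Neumann series defining $Gf$ telescope against the iterates $P^{n+1}f$. Aperiodicity (together with irreducibility, which follows from connectedness) is invoked to guarantee that $\lim_{n\to\iy}P^{n}f$ exists --- \emph{that} is the limit the hypothesis controls, not the Green kernel itself --- and then convergence of the series for $Gf$ falls out of the displayed identity for this particular $f$. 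Moreover the limit $h=\lim_{n\to\iy}P^{n}f=f+Gf$ is fixed by $P$, hence harmonic, which delivers the conclusion without ever needing $\Lap G=\id$ as a global operator identity. In short: your two one-line computations are correct once $Gf$ is secured, but securing it for the defect vector is the actual content of the forward direction, and it comes from the iteration identity plus aperiodicity-driven convergence of $P^{n}f$, not from any general convergence property of $G$.
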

\begin{proof}
	Suppose that $h = f+Gf$. Then $\Lap h = \Lap f + f$, so $h$ is harmonic if and only if $\Lap f + f = 0$. Conversely, if $\Lap f = -f$, then $c(\id-P)f = f$, or $Pf = (1+\frac1c)f$. By iteration, one obtains $P^{n+1} f = f + \sum_{k=0}^n P^k(\frac fc)$. Letting $n$ tend to \iy gives
	\linenopax
	\begin{align}\label{eqn:harmonic-limit}
		\lim_{n \to \iy} P^n f = f + (\id-P)^{-1} \tfrac1c f,
	\end{align}
	and aperiodicity of the random walk (and irreducibility which follows from the connectedness of $G$) ensures that the limit on the right side exists. This implies that the left side is fixed by $P$, in which case it must be harmonic, and \eqref{eqn:harmonic-limit} reads $h = f + Gf$.
\end{proof}

\begin{remark}\label{rem:harmonic-energy}
	It is not clear in the above that $h \in \HE$ when one begins with $f \in \HE$. Clearly, one gets $h \in \HE$ if and only if $Gf \in \HE$, and so the existence of a defect vector can be expressed in terms of how $G$ acts on \HE. 
\end{remark}

\section{Examples}

\begin{exm}[Geometric integers]\label{exm:geometric-integers}
  For a fixed constant $c > 0$, let $(\bZ,c^n)$ denote the network with integers for vertices, and with geometrically increasing conductances defined by $\cond_{n-1,n} = c^{\max\{|n|,|n-1|\}}$ so that the network under consideration is
  \linenopax
  \begin{align*}
    \xymatrix{
      \dots \ar@{-}[r]^{c^3}
      & -2 \ar@{-}[r]^{c^2} 
      & -1 \ar@{-}[r]^{c} 
      & 0 \ar@{-}[r]^{c} 
      & 1 \ar@{-}[r]^{c^2} 
      & 2 \ar@{-}[r]^{c^3} 
      & 3 \ar@{-}[r]^{c^4} 
      & \dots
    }
  \end{align*} 
  as in \cite[Ex.~6.2]{DGG} and \cite[Ex.~3.18]{SC}, and fix $o=0$. 
  
  For $c = 1$, $\Harm=\{0\}$ and \LapE is bounded and Hermitian, and thus clearly self-adjoint. Indeed, using Fourier theory, one can show that $\HE \cong L^2\left((-\gp,\gp),\sin^2(\frac t2)\right)$ and under this transform, \LapE corresponds to the multiplication operator$M_\gf$, for $\gf(f) = 4 \sin^2 t$. See \cite[\S6.3]{Friedrichs}, for example. 
  
  However, for $c>1$, $\Harm = \spn\{h\}$, where
  \linenopax
  \begin{align}\label{eqn:geometric-harmonic}
		h(n) = \sgn(n) (1-c) \sum_{k=1}^{|n|} c^{-k} = \sgn(n) (1-c^{-|n|}),
	\end{align}
	as depicted in Figure~\ref{fig:geometric-harmonic}. 
	Note that $h \in \Harm$ because
	\linenopax
	\begin{align}\label{eqn:h-energy}
		\|h\|_{\HE}^2 
		= 2 \sum_{n=1}^\iy c^n((1-c^{-n}) - (1-c^{-(n-1)}))^2
		= 2 \sum_{n=1}^\iy c^n(c^{-n}(c-1))^2
 		= 2(c-1)^2 \sum_{n=1}^\iy c^{-n}
		= 2(c-1).
	\end{align}
	It is shown in \cite[\S4.2]{SRAMO} that \LapE is not self-adjoint and has deficiency indices $(1,1)$. 	Furthermore, for the positive geometric integers $(\bZ_+,c^n)$, a defect vector $\gf \in \HE$ is constructed which satisfies
  \linenopax
  \begin{align}\label{eqn:defect}
    \Lap \gf(n) = -\gf(n). 
  \end{align}
  This construction can be extended to provide a defect vector on $(\bZ,c^n)$ which is supported only on $\bZ_+$. Here, we provide a construction for a defect vector on $(\bZ,c^n)$ which vanishes only at $o$ and which is much simpler than the construction in \cite{SRAMO}. 
  
  Set $f(o)=0$ and let the reader provide the value of $f(1) \neq 0$. Then the defect equation \eqref{eqn:defect} determines the values of $f$ on the rest of \bZ. For $n >0$, \eqref{eqn:defect} becomes 
  \linenopax
  \begin{align*}
		f(n+1) - \frac{c+1}{c}\left(1 + \frac1{c^n(c+1)}\right)f(n) + \frac1cf(n-1) = 0,
	\end{align*}
	so the characteristic equation gives roots $x_1 = 1+o(c^{-n})$ and $x_2 = \frac12 + o(c^{-n})$, as $n \to \iy$. This implies $f(n+1)-f(n) = c^{-n}(f(1)-f(0)) + o(c^{-n})$, as $n \to \iy$, and it clear that a similar computation holds for $n<0$. Hence, we have
  \linenopax
  \begin{align*}
    \sum_{n=1}^\iy c^n \left(f(n+1)-f(n)\right)^2 
    &= \sum_{n=1}^\iy c^{-n} \left(c^n (f(n+1)-f(n))\right)^2 
     = \sum_{n=1}^\iy c^{-n} \left(f(1)-f(0) + o(1)\right)^2.
  \end{align*}
  This shows that $\|f\|_{\HE} \leq \frac{c\left(f(1)-f(0)\right)^2}{1-c} + O(1)$, and therefore $f \in \HE$.
  
	\begin{figure}
		\begin{centering}
		\scalebox{0.7}{\includegraphics{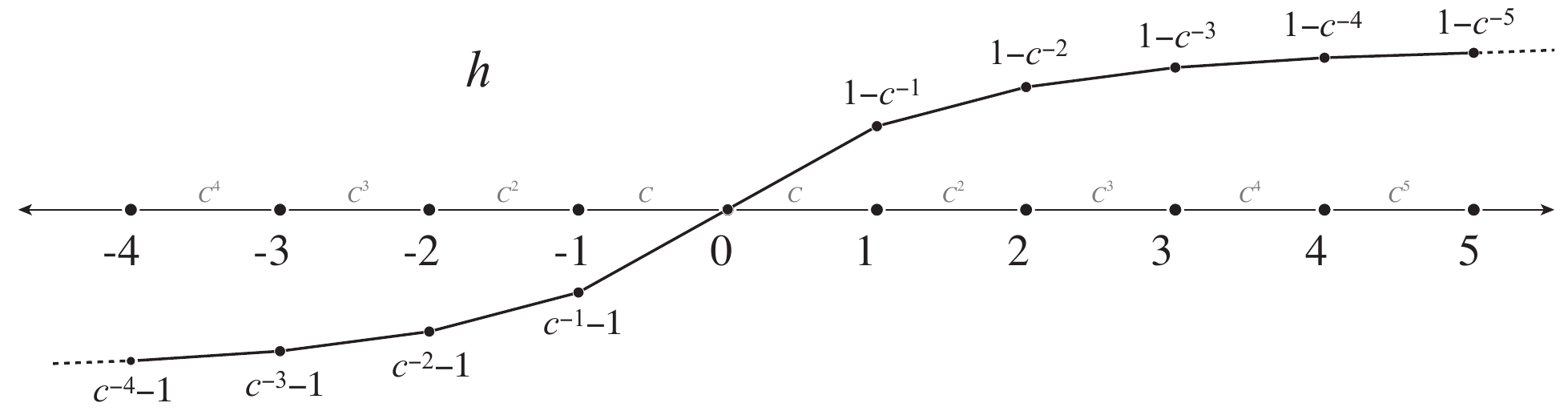}}
		\end{centering}
		\caption{\captionsize A harmonic function of finite energy on the geometric integers; see Example~\ref{exm:geometric-integers}.}
		\label{fig:geometric-harmonic}
	\end{figure}
\end{exm}

\begin{exm}[Geometric tree]\label{exm:geometric-tree}
	Consider the infinite binary tree \sT with root $o$ (the only vertex of degree 2) in which the conductances are given by 
  \linenopax
	\begin{align*}
		c_{xy} = c^{|x \wedge y|+1} 
		\qq\text{ for some constant $c \geq 1$}, 
	\end{align*}
	where $|x \wedge y|$ is the endpoint of the edge $(xy)$ that is closer to $o$. 
	
	For $c=1$, \Harm is separable with a basis that can be put in bijective correspondence with the Haar wavelets; see \cite{MR1918846, MR2511637, MR2680180} and the construction in \cite[Fig.~4]{ERM}. 
	Additionally, since this network is spherically symmetric in the sense of \cite[\S3.2]{Woj07}, it follows from \cite[Thm.~3.2.1]{Woj07} that \LapE is essentially self-adjoint, so the deficiency indices for \LapE are $(0,0)$.
	
	For $c>1$, one can specify any pair of rays from $o$ to \iy:
  \linenopax
	\begin{align*}
		R_+=(o=z^+_0,z^+_1,z^+_2,\dots)
		\q\text{and}\q
		R_-=(o=z^-_0,z^-_1,z^-_2,\dots),
	\end{align*}
	which satisfies the following conditions:
	\begin{itemize}
	\item $z^+_n \nbr z^+_{n-1}$ and $z^-_n \nbr z^-_{n-1}$, for all $n=1,2,\dots$.
	\item $|z^+_n|=|z^-_n|=n$, but $R_+$ and $R_-$ are disjoint except for $z^+_0=z^-_0=o$. 
	\end{itemize}
	Let $\gz:\bZ \to \sT$ be an embedding of the geometric integers into the tree via 
  \linenopax
	\begin{align*}
		\gz(n) = \begin{cases} z^+_n, &n \geq 0, \\ z^-_{-n}, &n\leq 0. \end{cases}
	\end{align*}
	Now one can define a harmonic function using \eqref{eqn:geometric-harmonic} as follows: 
  \linenopax
	\begin{align}\label{eqn:gh}
		\gh(x) = h(n), 
		\qq\text{where $\gz(n)$ is the nearest point of $\gz(\bZ)$ to $x$}.
	\end{align}
	Now \gh behaves like $h$ along the rays $R_+$ and $R_-$ in the sense that $\gh(z^+_n) = h(n)$ and $\gh(z^-_n) = h(-n)$.
	Also, \gh is locally constant on the complement of $\gz(\bZ)$. Letting $E(\sT)$ denote the edge set of \sT, this immediately gives 
  \linenopax
	\begin{align*}
		\|\gh\|_{\HE}^2 
		= \sum_{(xy) \in E(\sT)} c_{xy} (\gh(x)-\gh(y))^2
		= \sum_{(xy) \in \gz(\bZ)} c_{xy} (\gh(x)-\gh(y))^2
 		= \|h\|_{\HE}^2
	\end{align*}
because there is no contribution to the energy from edges not in $\gz(\bZ)$, by \eqref{eqn:gh}.
	
	This construction can be modified to produce a harmonic function $h_x$ based at any other vertex $x \in \sT$, by specifying  any pair of rays from $x$ to \iy:
  \linenopax
	\begin{align*}
		R_+=(x=z^+_0,z^+_1,z^+_2,\dots)
		\q\text{and}\q
		R_-=(x=z^-_0,z^-_1,z^-_2,\dots),
	\end{align*}
	satisfying the conditions
	\begin{itemize}
	\item $z^+_n \nbr z^+_{n-1}$ and $z^-_n \nbr z^-_{n-1}$, for all $n=1,2,\dots$.
	\item $|z^+_n|=|z^-_n|=|x|+n$, but $R_+$ and $R_-$ are disjoint except for $z^+_0=z^-_0=x$. 
	\end{itemize}
	Now define
  \linenopax
	\begin{align*}
		h^x(y) = \begin{cases} 1-c^{-n}, & y=z^+_n, \\ c^{-n}-1, &y=z^-_n, \\ h^x(y \wedge R), &y \notin R_+ \cup R_-, \end{cases}
	\end{align*}
	where $y \wedge R$ is the closest point of $R_+ \cup R_-$ to $y$.
	Thus, any isomorphic image of \bZ in \sT gives rise to a harmonic function of finite energy. Similarly, any isomorphic image of \bZ in \sT can be used to specify a defect vector of \LapE on \sT; see Figure~\ref{fig:geometric-tree}. Consequently, the deficiency indices for \LapE are $(\iy,\iy)$.

	\begin{figure}
		\begin{centering}
		\scalebox{0.9}{\includegraphics{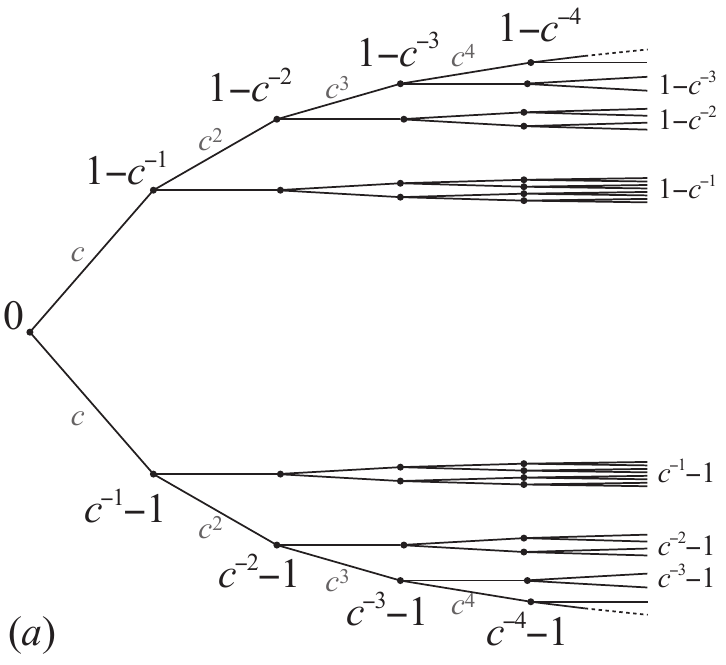}}
		\hstr[3]
		\scalebox{0.9}{\includegraphics{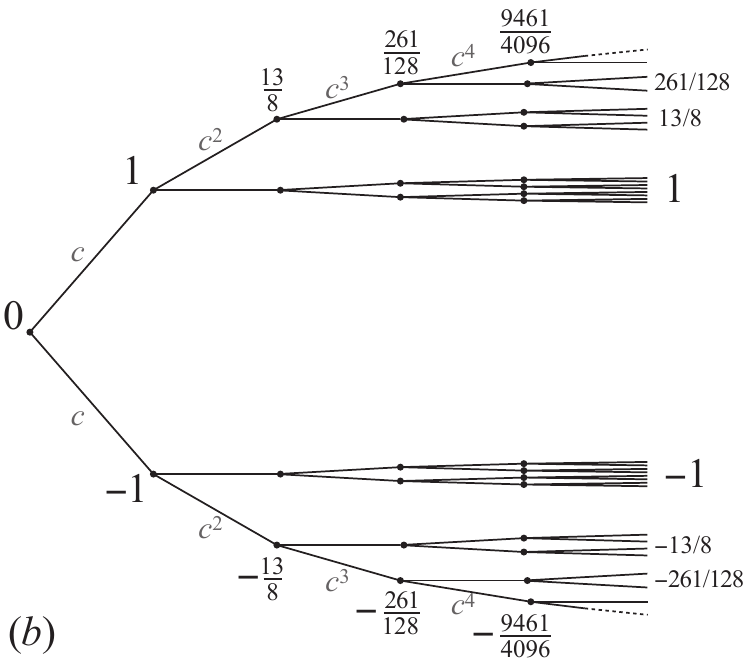}}
		\end{centering}
		\caption{\captionsize Function values are given in dark print; conductances in grey. (a) A harmonic function of finite energy on the geometric tree. (b) A defect vector on the geometric tree with $c=2$. See Example~\ref{exm:geometric-tree}.}
		\label{fig:geometric-tree}
	\end{figure}
\end{exm}

{\small
\bibliographystyle{math}
\bibliography{networks}

\begin{thebibliography}{HKLW}

\bibitem[AS]{MR595414}
Alberto Alonso and Barry Simon.
\newblock {The {B}irman-{K}re\u\i n-{V}ishik theory of selfadjoint extensions
  of semibounded operators}.
\newblock {\em J. Operator Theory} {\bf 4}(1980), 251--270.

\bibitem[Aro]{Aronszajn50}
N.~Aronszajn.
\newblock {Theory of reproducing kernels}.
\newblock {\em Trans. Amer. Math. Soc.} {\bf 68}(1950), 337--404.

\bibitem[AD]{MR1618628}
Damir~Z. Arov and Harry Dym.
\newblock {On three {K}rein extension problems and some generalizations}.
\newblock {\em Integral Equations Operator Theory} {\bf 31}(1998), 1--91.

\bibitem[BB]{BB09}
V.~F. Babenko and R.~O. Bilichenko.
\newblock {Approximation of unbounded operators by bounded operators in a
  {H}ilbert space}.
\newblock {\em Ukra\"\i n. Mat. Zh.} {\bf 61}(2009), 147--153.

\bibitem[Cas]{MR2349811}
Ren{\'e}~Erl{\'{\i}}n Castillo.
\newblock {A note on {K}rein's theorem}.
\newblock {\em Lect. Mat.} {\bf 26}(2005), 5--9.

\bibitem[DS]{DuSc88}
Nelson Dunford and Jacob~T. Schwartz.
\newblock {\em Linear operators. {P}art {II}}.
\newblock Wiley Classics Library. John Wiley \& Sons Inc., New York, 1988.

\bibitem[DJ1]{DJ06}
Dorin~E. Dutkay and Palle E.~T. Jorgensen.
\newblock {Methods from multiscale theory and wavelets applied to nonlinear
  dynamics}.
\newblock In {\em Wavelets, multiscale systems and hypercomplex analysis},
  volume 167 of {\em Oper. Theory Adv. Appl.}, pages 87--126. Birkh\"auser,
  Basel, 2006.

\bibitem[DJ2]{DuJo10}
Dorin~Ervin Dutkay and Palle E.~T. Jorgensen.
\newblock {Spectral theory for discrete {L}aplacians}.
\newblock {\em Complex Anal. Oper. Theory} {\bf 4}(2010), 1--38.

\bibitem[F{\=O}T]{FOT94}
Masatoshi Fukushima, Y{\=o}ichi {\=O}shima, and Masayoshi Takeda.
\newblock {\em Dirichlet forms and symmetric {M}arkov processes}, volume~19 of
  {\em de Gruyter Studies in Mathematics}.
\newblock Walter de Gruyter \& Co., Berlin, 1994.

\bibitem[HKLW]{HaeselerKellerLenzWoj12}
Sebastian Haeseler, Matthias Keller, Daniel Lenz, and Rados\l{}aw~K.
  Wojciechowski.
\newblock {Laplacians on infinite graphs: Dirichlet and Neumann boundary
  conditions}.
\newblock {\em J. Spectr. Theory} {\bf 2}(2012), 397--432.
\newblock \arxiv{1103.3695}.

\bibitem[HK]{HK10}
Tae Hattori and Atsushi Kasue.
\newblock {Functions of finite {D}irichlet sums and compactifications of
  infinite graphs}.
\newblock In {\em Probabilistic approach to geometry}, volume~57 of {\em Adv.
  Stud. Pure Math.}, pages 141--153. Math. Soc. Japan, Tokyo, 2010.

\bibitem[J{\o}r]{MR600620}
Palle E.~T. J{\o}rgensen.
\newblock {Unbounded operators: perturbations and commutativity problems}.
\newblock {\em J. Funct. Anal.} {\bf 39}(1980), 281--307.

\bibitem[JP1]{ERM}
Palle E.~T. Jorgensen and Erin P.~J. Pearse.
\newblock {A {H}ilbert space approach to effective resistance metrics}.
\newblock {\em Complex Anal. Oper. Theory} {\bf 4}(2010), 975--1030.
\newblock \arxiv{0906.2535}.

\bibitem[JP2]{RBIN}
Palle E.~T. Jorgensen and Erin P.~J. Pearse.
\newblock {Resistance boundaries of infinite networks.}
\newblock In {\em Progress in Probability: Boundaries and Spectral Theory},
  volume~64, pages 113--143. Birkhauser, 2010.
\newblock \arxiv{0909.1518}.

\bibitem[JP3]{bdG}
Palle E.~T. Jorgensen and Erin P.~J. Pearse.
\newblock {Gel'fand triples and boundaries of infinite networks}.
\newblock {\em N. Y. J. Math.} {\bf 17}(2011), 745--781.
\newblock \arxiv{0906.2745}.

\bibitem[JP4]{SRAMO}
Palle E.~T. Jorgensen and Erin P.~J. Pearse.
\newblock {Spectral reciprocity and matrix representations of unbounded
  operators}.
\newblock {\em J. Funct. Anal.} {\bf 261}(2011), 749--776.
\newblock \arxiv{0911.0185}.

\bibitem[JP5]{OTERN}
Palle E.~T. Jorgensen and Erin P.~J. Pearse.
\newblock {\em Operator theory and analysis of infinite resistance networks}.
\newblock To appear: Universitext. Springer, 2012.
\newblock 384 pages. \arxiv{0806.3881}.

\bibitem[JP6]{Friedrichs}
Palle E.~T. Jorgensen and Erin P.~J. Pearse.
\newblock {Self-adjoint extensions of network {L}aplacians and applications to
  resistance metrics}.
\newblock (2012).
\newblock 24 pages. In review. \arxiv{1103.5792}.

\bibitem[JP7]{RANR}
Palle E.~T. Jorgensen and Erin P.~J. Pearse.
\newblock {Boundary representations of reversible and nonreversible {M}arkov
  chains on resistance networks}.
\newblock (2013).
\newblock 19 pages. In preparation.

\bibitem[JP8]{DGG}
Palle E.~T. Jorgensen and Erin P.~J. Pearse.
\newblock {A discrete {G}auss-{G}reen identity for unbounded {L}aplace
  operators, and the transience of random walks}.
\newblock {\em Israel J. Math.} {\bf 196}(2013), 113--160.
\newblock \arxiv{0906.1586}.

\bibitem[JP9]{Interpolation}
Palle E.~T. Jorgensen and Erin P.~J. Pearse.
\newblock {Interpolation on resistance networks}.
\newblock (2013).
\newblock 14 pages. In preparation.

\bibitem[JP10]{Multipliers}
Palle E.~T. Jorgensen and Erin P.~J. Pearse.
\newblock {Multiplication operators on the energy space}.
\newblock {\em J. Operator Theory} {\bf 69}(2013), 135--159.
\newblock \arxiv{1007.3516}.

\bibitem[JP11]{LPS}
Palle E.~T. Jorgensen and Erin P.~J. Pearse.
\newblock {Scattering theory on resistance networks}.
\newblock (2013).
\newblock 13 pages. In preparation.

\bibitem[JP12]{SC}
Palle E.~T. Jorgensen and Erin P.~J. Pearse.
\newblock {Spectral comparisons between networks with different conductance
  functions}.
\newblock {\em J. Operator Theory} {\bf 72}(2014), 71--86.
\newblock \arxiv{1107.2786}.

\bibitem[KR]{KadisonRingroseI}
Richard~V. Kadison and John~R. Ringrose.
\newblock {\em Fundamentals of the theory of operator algebras. {V}ol. {I}},
  volume~15 of {\em Graduate Studies in Mathematics}.
\newblock American Mathematical Society, Providence, RI, 1997.
\newblock Elementary theory, Reprint of the 1983 original.

\bibitem[Kal]{Kal70}
G.~Kallianpur.
\newblock {The role of reproducing kernel {H}ilbert spaces in the study of
  {G}aussian processes}.
\newblock In {\em Advances in {P}robability and {R}elated {T}opics, {V}ol. 2},
  pages 49--83. Dekker, New York, 1970.

\bibitem[Kat]{Kat95}
Tosio Kato.
\newblock {\em Perturbation theory for linear operators}.
\newblock Classics in Mathematics. Springer-Verlag, Berlin, 1995.
\newblock Reprint of the 1980 edition.

\bibitem[KY1]{Kayano82}
Takashi Kayano and Maretsugu Yamasaki.
\newblock {Dirichlet finite solutions of {P}oisson equations on an infinite
  network}.
\newblock {\em Hiroshima Math. J.} {\bf 12}(1982), 569--579.

\bibitem[KY2]{Kayano84}
Takashi Kayano and Maretsugu Yamasaki.
\newblock {Boundary limit of discrete {D}irichlet potentials}.
\newblock {\em Hiroshima Math. J.} {\bf 14}(1984), 401--406.

\bibitem[KY3]{Kayano88}
Takashi Kayano and Maretsugu Yamasaki.
\newblock {Discrete {D}irichlet integral formula}.
\newblock {\em Discrete Appl. Math.} {\bf 22}(1988/89), 53--68.

\bibitem[KL1]{KellerLenz10}
Matthias Keller and Daniel Lenz.
\newblock {Unbounded {L}aplacians on graphs: basic spectral properties and the
  heat equation}.
\newblock {\em Math. Model. Nat. Phenom.} {\bf 5}(2010), 198--224.
\newblock \arxiv{1101.2979}.

\bibitem[KL2]{KellerLenz09}
Matthias Keller and Daniel Lenz.
\newblock {Dirichlet forms and stochastic completeness of graphs and
  subgraphs}.
\newblock {\em J. Reine Angew. Math. (Crelle's Journal)} {\bf 666}(2012),
  189--223.
\newblock \arxiv{0904.2985}.

\bibitem[KSK]{KemenySnellKnapp}
John~G. Kemeny, J.~Laurie Snell, and Anthony~W. Knapp.
\newblock {\em Denumerable {M}arkov chains}.
\newblock Springer-Verlag, New York-Heidelberg-Berlin, second edition, 1976.
\newblock With a chapter on Markov random fields, by David Griffeath, Graduate
  Texts in Mathematics, No. 40.

\bibitem[Kig]{MR2680180}
Jun Kigami.
\newblock {Dirichlet forms and associated heat kernels on the {C}antor set
  induced by random walks on trees}.
\newblock {\em Adv. Math.} {\bf 225}(2010), 2674--2730.

\bibitem[Koz]{MR1918846}
S.~V. Kozyrev.
\newblock {Wavelet theory as {$p$}-adic spectral analysis}.
\newblock {\em Izv. Ross. Akad. Nauk Ser. Mat.} {\bf 66}(2002), 149--158.

\bibitem[LPW]{LevPerWil08}
Daniel Levin, Yuval Peres, and Elizabeth Wilmer.
\newblock {\em Markov Chains and Mixing Times}.
\newblock American Mathematical Society, Providence, RI, 2008.

\bibitem[LP]{Lyons}
Russell Lyons and Yuval Peres.
\newblock {\em Probability on Trees and Graphs}.
\newblock Unpublished.

\bibitem[MYY]{MuYaYo}
Atsushi Murakami, Maretsugu Yamasaki, and Yoshinori {Yone-E}.
\newblock {Some properties of reproducing kernels on an infinite network}.
\newblock {\em Mem. Fac. Sci. Shimane Univ.} {\bf 28}(1994), 1--8.

\bibitem[PS]{PaSc72}
K.~R. Parthasarathy and K.~Schmidt.
\newblock {\em Positive definite kernels, continuous tensor products, and
  central limit theorems of probability theory}.
\newblock Lecture Notes in Mathematics, Vol. 272. Springer-Verlag, Berlin,
  1972.

\bibitem[PB]{MR2511637}
John Pearson and Jean Bellissard.
\newblock {Noncommutative {R}iemannian geometry and diffusion on ultrametric
  {C}antor sets}.
\newblock {\em J. Noncommut. Geom.} {\bf 3}(2009), 447--480.

\bibitem[RS]{ReedSimonI}
Michael Reed and Barry Simon.
\newblock {\em Methods of modern mathematical physics. {I}. {F}unctional
  analysis}.
\newblock Academic Press, New York, 1972.

\bibitem[Rud1]{Rud87}
Walter Rudin.
\newblock {\em Real and complex analysis}.
\newblock McGraw-Hill Book Co., New York, third edition, 1987.

\bibitem[Rud2]{Rud91}
Walter Rudin.
\newblock {\em Functional analysis}.
\newblock International Series in Pure and Applied Math. McGraw-Hill Inc., New
  York, second edition, 1991.

\bibitem[Soa]{Soardi94}
Paolo~M. Soardi.
\newblock {\em Potential theory on infinite networks}, volume 1590 of {\em
  Lecture Notes in Mathematics}.
\newblock Springer-Verlag, Berlin, 1994.

\bibitem[Sto1]{MR0052042}
M.~H. Stone.
\newblock {On unbounded operators in {H}ilbert space}.
\newblock {\em J. Indian Math. Soc. (N.S.)} {\bf 15}(1951), 155--192 (1952).

\bibitem[Sto2]{Sto90}
Marshall~H. Stone.
\newblock {\em Linear transformations in {H}ilbert space}, volume~15 of {\em
  American Mathematical Society Colloquium Publications}.
\newblock American Mathematical Society, Providence, RI, 1990.

\bibitem[vBL]{vBL09}
Joachim von Below and Jos{\'e}~A. Lubary.
\newblock {Isospectral infinite graphs and networks and infinite eigenvalue
  multiplicities}.
\newblock {\em Netw. Heterog. Media} {\bf 4}(2009), 453--468.

\bibitem[vN]{vN32}
J.~von Neumann.
\newblock {\"{U}ber adjungierte {F}unktionaloperatoren}.
\newblock {\em Ann. of Math. (2)} {\bf 33}(1932), 294--310.

\bibitem[Woe1]{Woess00}
Wolfgang Woess.
\newblock {\em Random walks on infinite graphs and groups}, volume 138 of {\em
  Cambridge Tracts in Mathematics}.
\newblock Cambridge University Press, 2000.

\bibitem[Woe2]{Woess09}
Wolfgang Woess.
\newblock {\em Denumerable {M}arkov chains}.
\newblock EMS Textbooks in Mathematics. European Mathematical Society (EMS),
  Z\"urich, 2009.
\newblock Generating functions, boundary theory, random walks on trees.

\bibitem[Woj]{Woj07}
Rados{\l}aw~K. Wojciechowski.
\newblock {Stochastic Completeness of Graphs}.
\newblock {\em Ph. D. Dissertation} (2007).
\newblock 72 pages. \arxiv{0712.1570}.

\bibitem[Yam]{Yamasaki79}
Maretsugu Yamasaki.
\newblock {Discrete potentials on an infinite network}.
\newblock {\em Mem. Fac. Sci. Shimane Univ.} {\bf 13}(1979), 31--44.

\bibitem[Zem]{Zem91}
Armen~H. Zemanian.
\newblock {\em Infinite electrical networks}, volume 101 of {\em Cambridge
  Tracts in Mathematics}.
\newblock Cambridge University Press, 1991.

\end{thebibliography}
}

\vspace{2cm}

\end{document}